\documentclass[12pt,a4paper]{article}%
\usepackage[utf8]{inputenc}
\usepackage{hyperref}
\usepackage{amsmath}
\usepackage{amsfonts}
\usepackage{amssymb}
\usepackage{xcolor}
\usepackage{graphicx}%
\usepackage{array}
\usepackage{lineno}
\usepackage{caption}
\providecommand{\U}[1]{\protect\rule{.1in}{.1in}}
\newtheorem{theorem}{Theorem}

\newtheorem{example}[theorem]{Example}

\newtheorem{lemma}[theorem]{Lemma}

\newtheorem{problem}[theorem]{Problem}
\newtheorem{proposition}[theorem]{Proposition}
\newtheorem{remark}[theorem]{Remark}

\newenvironment{proof}[1][Proof]{\noindent\textbf{#1.} }{\ \hfill \rule{0.5em}{0.5em}\bigskip}
\graphicspath{{../Slike/}{Slike/}}

\begin{document}

\title{\large Spreads of degrees in maximal planar graphs -- an exposition}
\author{Yair Caro$^{1}$, Riste \v{S}krekovski$^{2,3,4}$, Christina Zarb$^{5}$\\[0.3cm]
{\small $^{1}$ \textit{University of Haifa-Oranim, Kiryat Tiv'on, Israel}}\\[0.1cm]
{\small $^{2}$ \textit{University of Ljubljana, Faculty of Mathematics and Physics,
Ljubljana, Slovenia}}\\[0.1cm]
{\small $^{3}$ \textit{Faculty of Information Studies, Novo Mesto, Slovenia}}\\[0.1cm]
{\small $^{4}$ \textit{Rudolfovo -- Science and Technology Centre Novo Mesto, Slovenia}}\\[0.1cm]
{\small $^{5}$ \textit{Department of Mathematics, University of Malta, Malta}}}
\date{}
\date{}
\maketitle

\begin{abstract}
For a graph $G$ and a set $B\subseteq V(G)$, the spread $\mathrm{sp}(B)$ of $B$ is the difference between the
largest and the smallest degree in $G$ of a vertex of $B$, and for an integer $k\geq 0$ the parameter
$\mathrm{sp}(G,k)$ is the largest cardinality of a set $B$ with $\mathrm{sp}(B)\leq k$. Caro, Lauri and Zarb
asked for the minimum of $\mathrm{sp}(G,k)$ over the maximal planar graphs of order $n$; we write
$\mathrm{MP}(n,\delta ,k)$ for this minimum over the maximal planar graphs of order $n$ and minimum degree
$\delta \in \{3,4,5\}$.

This manuscript is intended as an exposition of the subject of spread in the degree sequence of a graph $G$,
with emphasis on maximal planar graphs. We apply the general lower bound of the companion paper \cite{P1} to
this class, together with further ideas, some based on Caro--West and Caro--Lauri--Zarb and some new. In particular we asymptotically determine the values of $\mathrm{MP}(n,\delta ,k)$ for every pair $(\delta ,k)$ with
$\delta \in \{3,4,5\}$ and $k\geq 0$. This solution recovers the case $\mathrm{MP}(n,\delta ,0)$, which is the
repetition number $\mathrm{rep}(G)$ introduced by Caro and West.
\end{abstract}

\textit{Keywords:} vertex degrees; spread of degrees; repetition number; maximal planar graphs; triangulations.

\textit{AMS Subject Classification numbers:} 05C07, 05C10

\section{Introduction}

Every graph on at least two vertices has two vertices of the same degree. This classroom observation
has been generalised in several directions, among them a characterisation of the graphs with only one repeated pair of
degrees \cite{BehzadChartrand}, the characterisation of graphic sequences
\cite{ErdosGallai, Hakimi, Havel}, and the study of the repetition number $\mathrm{rep}(G)$, the
maximum multiplicity of a value in the degree sequence of $G$, introduced by Caro and West
\cite{CaroWest}; for the degrees of maximal outerplanar graphs in particular see Jao and West
\cite{JaoWest}.

Since everything below grows out of a handful of papers, we recall what they contain, beginning with
the repetition number. Caro and West \cite{CaroWest} introduced $\mathrm{rep}(G)$ and proved, by a
counting argument on the degree sequence, that
\[
\mathrm{rep}(G)\ \geq \ \left\lceil \frac{n}{2d-2\delta +1}\right\rceil
\]
for every graph of order $n$ with average degree $d$ and minimum degree $\delta $, next to the
elementary $\mathrm{rep}(G)\geq n/\Delta (G)$ which the pigeonhole principle gives when $G$ has no
isolated vertex.  Caro and West showed
their bound to be approximately sharp on several classes at once: the least repetition number of a
graph of order $n$ is asymptotic to $n/3$ among trees, to $n/5$ among maximal outerplanar graphs, to
$n/7$ among maximal planar graphs, to $n/5$ among maximal planar graphs of minimum degree four and to
$n/3$ among maximal planar graphs of minimum degree five \cite[Theorem 2.5]{CaroWest}. The last three of
these are the case $k=0$ of the problem of the present paper, and the constructions behind them are
tilings: a tile is designed to carry the intended degrees, it is repeated over the hexagonal grid, and
the pattern is stopped along a boundary of $O(\sqrt{n})$ vertices. That is also the shape of our
constructions in Section \ref{Sec_asym}, and the reason why the error terms there are $O(\sqrt{n})$ as
well.

The question regarding how far the degrees of a planar graph can be pushed apart has of course been asked in
other ways. One classical answer bounds the second moment of the degree sequence: Cook \cite{Cook}
proved that a planar graph of order $n$ with degrees $d_{1},\ldots ,d_{n}$ satisfies
$\sum_{i}d_{i}^{2}\leq 2n^{2}+O(n)$, with the main term best possible, and $\sum_{i}d_{i}^{2}\leq
n^{2}+O(n)$ if the graph is outerplanar. For the exact upper bound on the sum of the $k$-th powers of the degrees of a planar graph see Harant,
Jendro\v{l} and Madaras \cite{HarantJendrolMadaras}.


The generalisation we consider here is due to Erd\H{o}s, Chen, Rousseau and Schelp \cite{Erdos} and
was developed further by Caro, Lauri and Zarb \cite{CaroLauriZarb}. Let $G=(V,E)$ be a graph. For
$B\subseteq V$, the \emph{spread} of $B$ is
\[
\mathrm{sp}(B)=\max_{u\in B}\deg _{G}(u)-\min_{v\in B}\deg _{G}(v),
\]
and for an integer $k\geq 0$ we set
\[
\mathrm{sp}(G,k)=\max \{\left\vert B\right\vert :B\subseteq V(G)\text{ and }\mathrm{sp}
(B)\leq k\}.
\]
Thus $\mathrm{sp}(G,0)=\mathrm{rep}(G)$, and the first elementary result of graph theory reads
$\mathrm{sp}(G,0)\geq 2$ for every graph of order at least two. Erd\H{o}s, Chen, Rousseau and
Schelp proved that $\mathrm{sp}(G,k)\geq k+2$ whenever $G$ has at least $k+2$ vertices, and a short
proof avoiding the Erd\H{o}s--Gallai theorem is given in \cite{CaroLauriZarb}.

Notice that, writing $n_{j}$ for the number of vertices of degree $j$ in $G$, we have
\begin{equation}
\mathrm{sp}(G,k)=\max_{p\geq 0}\ (n_{p}+n_{p+1}+\cdots +n_{p+k}),  \label{For_window}%
\end{equation}

so that $\mathrm{sp}(G,k)$ depends on the degree sequence of $G$ only. We shall use
(\ref{For_window}) throughout, and we call the sets of $k+1$ consecutive degrees occurring in
(\ref{For_window}) \emph{windows}.

It is worth pausing over what (\ref{For_window}) measures. Reading the degree sequence of $G$ as a
distribution of $n$ units of mass over the integers, $\mathrm{sp}(G,k)$ is the largest mass carried by
$k+1$ consecutive values; a graph with a small value for $\mathrm{sp}(G,k)$ is one whose degrees are genuinely spread out,
while one with large $\mathrm{sp}(G,k)$ has a bulk of vertices of nearly equal degree. The
parameter increases with $k$, it equals $n$ as soon as $\Delta (G)-\delta (G)\leq k$, and it satisfies
$\mathrm{sp}(G,k)\leq (k+1)\,\mathrm{sp}(G,0)$ and
$\mathrm{sp}(G,k)\geq n/\left\lceil (\Delta -\delta +1)/(k+1)\right\rceil $, the latter because
$\left\lceil (\Delta -\delta +1)/(k+1)\right\rceil $ windows already cover all the degrees; the first of
these two inequalities and the invariance $\mathrm{sp}(G,k)=\mathrm{sp}(\overline{G},k)$ under
complementation are noted in \cite{CaroLauriZarb}. Every one of these estimates uses only the crudest
information about $G$, and all of them are weak: the interest of the parameter lies in what happens
inside a class of graphs whose structure constrains the degree sequence, and the constraint that matters
is on the degree sum.


A graph is \emph{maximal planar} if it is planar and the addition of any edge destroys planarity;
equivalently, for order $n\geq 3$, it is a triangulation of the sphere, with $3n-6$ edges and $2n-4$
triangular faces, so that
\[
\sum_{v\in V(G)}\deg (v)=2e(G)=n\,d(G)=6n-12 ,
\]
where $d(G)$ is the average degree; and for $n\geq 4$ the minimum degree is $3$, $4$ or $5$. It is this class that Caro, Lauri and Zarb proposed for the spread. Their paper
\cite{CaroLauriZarb} gives a lower bound on $\mathrm{sp}(G,k)$ in terms of the order, the average
degree, the minimum degree and the maximum degree of $G$ \cite[Theorem 2.1]{CaroLauriZarb}, together
with a sharper form, the inequality (4) of that paper, carrying a free integer parameter $t$; for
$k=0$ it is the bound of Caro and West. They then test the bound on two classes whose average degree
is determined by the order. For a tree they obtain $\mathrm{rep}(T)\geq \left\lceil n/3\right\rceil $,
which is the bound of \cite{CaroWest}, and $\mathrm{sp}(T,k)\geq (nk+2)/(k+1)$ for $k\geq 1$, both
sharp. For maximal outerplanar graphs they introduce
\[
\mathrm{MOP}(n,k)=\min \{\mathrm{sp}(G,k):G\text{ a maximal outerplanar graph of order }n\}
\]
and prove $\mathrm{MOP}(n,0)>n/5$, $\mathrm{MOP}(n,1)\geq n/3+1$, $\mathrm{MOP}(n,2)\geq 4n/9$ and
$\mathrm{MOP}(n,k)\geq (k-2)n/(k-1)$ for $k\geq 3$, the last by feeding into their count a bound of Jao
and West \cite{JaoWest} on the number of vertices of large degree; a family taken from
\cite{JaoWest} shows that last bound sharp up to an additive constant, a construction of their own does
the same for $k=1$, and for $k=0$ the construction of \cite{CaroWest} already does. The single
exception is $k=2$, where their best family gives only
$\mathrm{sp}(G,2)=(5n+19)/11$ for $n\equiv 5\ (\mathrm{mod}\ 11)$, and Problem 1 of
\cite{CaroLauriZarb} asks for the correct order of magnitude of $\mathrm{MOP}(n,2)$. That problem is
settled in the companion paper \cite{P1}, where we first prove a lower bound on $\mathrm{sp}(G,k)$ for
an arbitrary graph in terms of its order, its size and its minimum degree, containing the bounds of
\cite{CaroLauriZarb,CaroWest} and refining them by an explicit error term, and then obtain
$\mathrm{MOP}(n,2)=4n/9+O(1)$. Problem 2 of \cite{CaroLauriZarb} asks for the analogous quantity for
maximal planar graphs, and that problem is the subject of the present expository paper.

When preparing this paper we realised that, while some of the basic ideas are elaborations of seed ideas
already in \cite{CaroLauriZarb} and \cite{CaroWest}, several of the proofs of the lemmas and theorems are
rather technical and take some effort to follow. We therefore decided to incorporate many drawings, remarks,
examples and explanations along the way, in the hope that they will support a smoother reading. To the same
end we have put the companion paper \cite{P1} separately on the arXiv, where the window counting method is
developed in detail, its usefulness in settling the missing case $\mathrm{MOP}(n,2)$ of \cite{CaroLauriZarb}
is shown, and its value in bounding $\mathrm{sp}(G,k)$ from below in terms of $n$, $\delta $ and the average
degree $d=2m/n$ is set out. Our main objective here is to solve $\mathrm{MP}(n,\delta ,k)$ asymptotically,
as proposed in \cite{CaroLauriZarb}, mostly by that method together with a gallery of constructions which
elaborate and extend those of \cite{CaroLauriZarb} and \cite{CaroWest}. Not all pairs $(\delta ,k)$ can be
covered by one type of construction, and we have also included many examples and a good deal of information
about the computations at small $n$, to illustrate the techniques in use --- aware, in doing so, of what this
contributes to the length of the paper.


We therefore consider, for $\delta \in \{3,4,5\}$,
\[
\mathrm{MP}(n,\delta ,k)=\min \{\mathrm{sp}(G,k):G\text{ maximal planar of order }n\text{ with }\delta (G)=\delta \}.
\]

Since the minimum degree of a maximal planar graph on $n\geq 4$ vertices is a member of $\{3,4,5\}$, it follows
that the answer to the problem of determining $\mathrm{MP}(n,\delta ,k)$ is dominated strongly by the
parameter $\delta $, and we split the solution accordingly.

\begin{theorem}
\label{Tm_intro}For every $\delta \in \{3,4,5\}$ and every $k\geq 0$ the limit
\[
c_{\delta ,k}=\lim_{n\rightarrow \infty }\frac{\mathrm{MP}(n,\delta ,k)}{n}
\]
exists, and in fact $\mathrm{MP}(n,\delta ,k)=c_{\delta ,k}\,n+O(\sqrt{n})$ for every sufficiently large
$n$. Its value is
\[
\begin{array}
[c]{ll}%
c_{5,0}=\dfrac{1}{3}, & c_{5,k}=\dfrac{k}{k+1}\ \ (k\geq 1),\\[3mm]
c_{4,0}=\dfrac{1}{5},\ \ c_{4,1}=\dfrac{1}{3},\ \ c_{4,2}=\dfrac{4}{9}, & c_{4,k}=\dfrac{k-1}{k+1}\ \
(k\geq 3),\\[3mm]
c_{3,0}=\dfrac{1}{7},\ \ c_{3,1}=\dfrac{1}{4},\ \ c_{3,2}=\dfrac{1}{3},\ \ c_{3,3}=\dfrac{5}{12},\ \
c_{3,4}=\dfrac{7}{15}, & c_{3,k}=\dfrac{k-2}{k+1}\ \ (5\leq k\leq 8),\\[3mm]
& c_{3,k}=\dfrac{k-4}{k-2}\ \ (k\geq 9).
\end{array}
\]
For every pair except those of the last line, $c_{\delta ,k}$ is the coefficient of $n$ in the window
count of \cite{P1} maximised over its free parameter, and the lower bound is an inequality valid at
every order (Theorems \ref{Tm_planar} and \ref{Tm_planar34}). For $\delta =3$ and $k\geq 9$ that count
is not optimal; there the lower bound comes instead from the excess inequality
$\sum \{\deg (v)-6:\deg (v)\geq 7\}=\sum_{v}(\deg (v)-6)^{+}\leq 2n-14$, in which $x^{+}$ denotes
$\max \{x,0\}$, so that only the vertices of degree at least seven contribute; that bound too is valid at
every order (Lemma \ref{Lemma_excess} and Theorem \ref{Tm_deg3exact}). In every case the matching upper
bound is furnished by an explicit periodic triangulation of the plane, cut to a large disk and closed by a
cap and an apex vertex --- the cap by itself does not make the result maximal planar, since the outer face
has to be a triangle too.
\end{theorem}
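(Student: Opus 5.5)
The proof splits into a lower bound valid at every order and a matching construction with $O(\sqrt{n})$ error. The limit exists and equals $c_{\delta,k}$ once both bounds hold.

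\emph{Lower bound.} Use the degree sum $\sum_v \deg(v)=6n-12$, i.e.\ $\sum_v(\deg(v)-6)=-12$, together with $\deg(v)\ge\delta$, and run the window count of \cite{P1}. Suppose $\mathrm{sp}(G,k)=s$. Then each window of $k+1$ consecutive degrees starting at $\delta$ carries at most $s$ vertices. The first $t$ windows cover the degrees $\delta,\dots,\delta+t(k+1)-1$ and hold at most $ts$ vertices. Every other vertex has degree at least $\delta+t(k+1)$, and the window structure pushes their degrees further up. To minimise the degree sum, fill the low windows greedily, each with $s$ vertices placed at its lowest admissible degrees. This gives
\[
6n-12\;\ge\;\sum_{\text{filled windows}} s\cdot(\text{window floor})\;+\;\bigl(n-ts\bigr)\bigl(\delta+t(k+1)\bigr),
\]
which is linear in $n$ and $s$. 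Solving for $s$ and maximising over the free integer $t$ gives $s\ge c\,n-O(1)$.

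I would then check the optimisation case by case. For $\delta=5$ and $k\ge1$, the choice $t=1$ forces all remaining vertices to have degree at least $5+k+1\ge7$, which gives $k/(k+1)$. For $k=0$ one recovers the Caro--West values $1/3,1/5,1/7$. The values $4/9$, $5/12$, $7/15$ should appear at $t=2$ or $t=3$, and I would verify these arithmetically.

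For $\delta=3$ and $k\ge9$ this count is not optimal, and I would use Lemma~\ref{Lemma_excess} instead: $\sum_v(\deg v-6)^+\le 2n-14$. Take the window $\{6,\dots,6+k\}$, or one shifted to $\{3,\dots,3+k\}$ if that is better. The vertices outside it have degree at least $7+k$ and so contribute excess at least $k+1$ each, though not all of them need be of high degree. Balancing the low-degree deficit against the high-degree excess, with the window placed optimally (roughly $\{5,\dots,5+k\}$ region), should produce $(k-4)/(k-2)$. This is the content of Theorem~\ref{Tm_deg3exact}, and I would take that theorem's statement as given.

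\emph{Upper bound.} For each pair $(\delta,k)$, the extremal configuration of the linear program says which degrees occur and in what proportions; for example, for $\delta=5$, $k\ge1$, the degrees are $5$ and about $6+k$, in ratio making the average $6$. I would design a periodic triangulation of the plane (a tile on the hexagonal lattice) realising exactly this degree distribution, with average degree $6$. Cut a disk of $n-O(\sqrt n)$ vertices from it. Then triangulate the boundary region by a cap whose $O(\sqrt n)$ vertices are given degrees spread over many distinct values, or simply counted as error, and join an apex vertex so that the result is maximal planar with minimum degree exactly $\delta$. The cap must not create vertices of degree below $\delta$, which matters especially for $\delta=5$. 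Since the boundary has only $O(\sqrt n)$ vertices, $\mathrm{sp}(G,k)\le c_{\delta,k}n+O(\sqrt n)$.

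\emph{Main obstacle.} I expect the hardest part to be the constructions: finding, for each row of the table, a planar periodic tile with precisely the prescribed degree frequencies. The cases $c_{4,2}=4/9$, $c_{3,3}=5/12$, $c_{3,4}=7/15$ and the $k\ge9$ family for $\delta=3$ each need a bespoke tile. My first attempt would be to start from the degree-$6$ triangular lattice and modify it: insert high-degree vertices at lattice points, and lower neighbouring degrees by edge flips or vertex splitting, until the target frequencies are met. I would check each tile's frequencies with the local Euler charge $\sum(\deg-6)=0$ per period.
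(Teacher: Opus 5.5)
Your lower bound for every pair outside the last line is the paper's. The greedy inequality rearranges to $6n-12\geq n(\delta +th)-s\,h\,t(t+1)/2$ with $h=k+1$, which is exactly (\ref{For_clz4}). The case-by-case optimisation over $t$ you propose is what Theorems~\ref{Tm_planar} and~\ref{Tm_planar34} carry out.

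For $\delta =3$, $k\geq 9$ you may cite Theorem~\ref{Tm_deg3exact}, but the mechanism you sketch for it would not work. The windows $\{6,\ldots ,6+k\}$ and $\{5,\ldots ,5+k\}$ discard every vertex of degree $3$ (and $4$), and these make up most of an extremal graph. The window that works is $\{3,\ldots ,k+3\}$. Since $\delta =3$, it misses only the vertices of degree at least $k+4$, each of which carries excess at least $k-2$. So there are at most $(2n-14)/(k-2)$ of them, and $\mathrm{sp}(G,k)\geq ((k-4)n+14)/(k-2)$.

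The genuine gap is the upper bound. It is most of the work, and the existence of the limit depends on it. You reduce it to \emph{designing a periodic triangulation with the extremal densities}, but you exhibit none, and this step is not routine:
\begin{itemize}
\item The statement contains three infinite families ($\delta =4$, $k\geq 3$; $\delta =5$, $k\geq 2$; $\delta =3$, $k\geq 9$), each needing a construction uniform in $k$.
\item It also contains sporadic pairs such as $(4,2)$, $(3,1)$, $(3,3)$, $(3,4)$. Their equality sequences have three or four degrees, all of which must be hit with the right densities at once.
\item \emph{Flip or split until the frequencies are met} describes a search, not a proof that the search succeeds.
\end{itemize}
The paper supplies these tiles explicitly:
\begin{itemize}
\item chains of degree-four vertices across the cells of the square grid, making every grid vertex of degree exactly $k+5$ (Proposition~\ref{Prop_chain4});
\item wheels $W_{k+6}$ glued along their rims (Proposition~\ref{Prop_wheel});
\item six hand-made patterns (Proposition~\ref{Prop_hand});
\item seven computer-found torus triangulations certified through labelled rotation systems (Lemma~\ref{Lemma_torus}, Proposition~\ref{Prop_computer}).
\end{itemize}

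Moreover, for $\delta =3$, $k\geq 9$ your recipe of realising \emph{the extremal configuration of the linear program} aims at an impossible target. The equality sequence $3^{s}(k+4)^{n-s}$ of the window count has degree-three density $(k-2)/(k+1)>2/3$, which Lemma~\ref{Lemma_deg3count} forbids. The target must instead come from the equality case of the excess inequality: $X(G)\approx 2n$, carried essentially entirely by vertices of degree exactly $k+4$. One such target is the degrees $3,4,k+4$ with densities $4/(k-2)$, $(k-8)/(k-2)$, $2/(k-2)$. The paper realises it by inserting chains of total length $k-4$ into the four classes of faces of the triangular lattice (Proposition~\ref{Prop_deg3}).

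Finally, the passage to the sphere needs more than you state. The cut region must be cleaned of ears and pinch vertices to a disk with a simple boundary cycle whose vertices each have at least three neighbours inside, so that gaining two in the cap leaves them of degree at least five. The cap must also accept any prescribed number $p\geq m+1$ of vertices, all of degree at least five, so that every order $n$ is reached exactly (Lemmas~\ref{Lemma_closure} and~\ref{Lemma_periodic}).
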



Theorem \ref{Tm_intro} is Theorem \ref{Tm_asym} together with Theorem \ref{Tm_deg3exact} and
Proposition \ref{Prop_deg3}, and it is proved in Section \ref{Sec_asym}. Two ideas carry it, and it is
worth describing both before any formalism.

The first is a \emph{window count}. Fix $t\geq 1$ and lay the windows out from the minimum degree
upwards: $I_{0}=\{\delta ,\ldots ,\delta +k\}$, then $I_{1}$ the next $k+1$ integers, and so on. Give a
vertex the weight $t$ if its degree lies in $I_{0}$, the weight $t-1$ if it lies in $I_{1}$, and so on
down to the weight $1$ on $I_{t-1}$ and the weight $0$ above. The total weight can be estimated from two
sides. Each window holds at most $s=\mathrm{sp}(G,k)$ vertices, so the total weight is at most
$s\left( t+(t-1)+\cdots +1\right) =s\,t(t+1)/2$. On the other hand a vertex of degree $x$ has weight at
least $\left( \delta +t(k+1)-x\right) /(k+1)$: a vertex of small degree sits in a low window and carries
much weight, and a vertex of degree $\delta +t(k+1)$ or more carries none but is not needed. Summing that
over all vertices and using $\sum_{v}\deg (v)=2m$ bounds the total weight from below by
$\left( n(\delta +t(k+1))-2m\right) /(k+1)$. Comparing the two estimates and solving for $s$ gives the
bound (\ref{For_clz4}) contained in Theorem \ref{Tm_gen}, and keeping track of the slack in the second
estimate gives Theorem \ref{Tm_gen} in full. The bound has the form it has because the cheapest way to keep all windows thin is to use
one degree per window and to place it as low as possible, that is to use the degrees $\delta ,\delta
+(k+1),\ldots ,\delta +t(k+1)$ with $s$ vertices at each of the first $t$ of them; such a sequence has
degree sum roughly $n\delta +(k+1)\,n^{2}/(2s)$, so that a small $s$ demands a large degree sum. For a
maximal planar graph the degree sum is $6n-12$ and nothing more can be demanded, which turns the inequality
into a lower bound on $s$ linear in $n$. The parameter $t$ is free, and one takes the best of these
bounds; Theorem \ref{Tm_sequence} says that the result, the maximised bound, is exactly the minimum of
the window number over all integer sequences with the given order, size and minimum degree, so that the
only thing the count can lose is graphicality. For $\delta =5$ the picture is at its simplest: with
$t=1$ the count gives $\mathrm{sp}(G,k)\geq (kn+12)/(k+1)$, that is, at most $(n-12)/(k+1)$ vertices of
a maximal planar graph of minimum degree five can escape a single window of $k+1$ consecutive degrees.

The second idea is a \emph{periodic construction}, and it answers the question which the first leaves
open: whether the extremal degree sequence is really the degree sequence of a maximal planar graph. The
difficulty is not the interior but the boundary, and it is entirely a matter of Euler's formula. If
every face of a triangulation of a closed surface of Euler characteristic $\chi $ is a triangle, then
counting each face's three sides once from each of the two faces along it gives $3f=2e$, and $n-e+f=\chi $
then forces
\[
e=3n-3\chi ,\qquad f=2n-2\chi ,\qquad \sum_{v}\deg (v)=2e=6n-6\chi .
\]
Two values of $\chi $ matter here. On the \emph{sphere} $\chi =2$: a maximal planar graph on $n\geq 3$
vertices has $3n-6$ edges and $\sum_{v}\deg (v)=6n-12$, that is $\sum_{v}(6-\deg (v))=12$. On the
\emph{torus} $\chi =0$: a triangulation has $3n$ edges and $\sum_{v}\deg (v)=6n$, that is
$\sum_{v}(6-\deg (v))=0$. For the same number of vertices the sphere therefore has exactly
$3\chi =6$ edges fewer than the torus, and a degree sum smaller by twelve. Figure \ref{Fig_euler} shows the
two smallest examples of each.

Those twelve units are the whole difficulty. On the torus nothing has to be paid: the degrees may average
exactly six, so a pattern with prescribed degree densities can repeat forever, and it is a finite object
which one can design by hand or search for by computer. On the sphere the twelve must be paid somewhere. So
one designs the pattern on the torus --- equivalently, as a doubly periodic pattern in the plane --- cuts out
a large disk, and pays the whole twelve along the boundary of that disk: the disk is closed by a cap and an
apex vertex. (The six edges are lost in the cutting and the capping, not by deleting six edges from a torus
triangulation, which would leave faces that are not triangles; the operation itself is developed in Lemmas
\ref{Lemma_closure} and \ref{Lemma_periodic}, and it changes the number of vertices as well.) 

\begin{figure}[htbp]
\begin{center}
\includegraphics[width=0.34\textwidth]{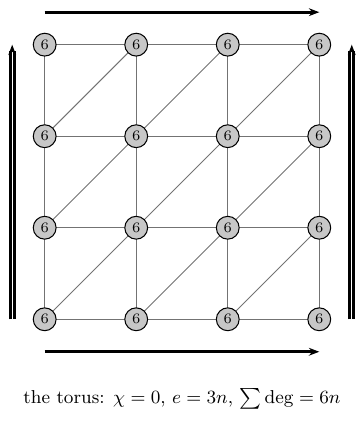}\hfill
\includegraphics[width=0.46\textwidth]{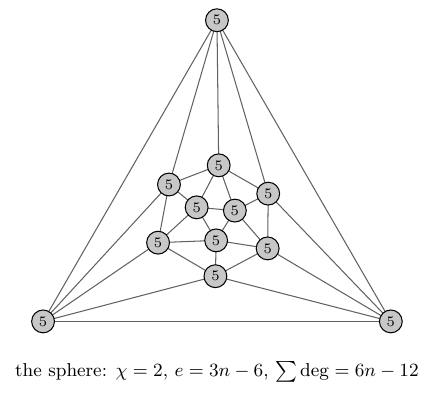}
\end{center}
\caption{The two counts. Left: the quotient of the triangular lattice by the translations $(3,0)$ and
$(0,3)$, a triangulation of the torus on $n=9$ vertices with $27=3n$ edges, drawn in a square whose opposite
sides are identified as the arrows show --- the sixteen discs of the picture are nine vertices, the four
corners being one of them --- and every degree is exactly six, so $\sum_{v}(6-\deg (v))=0$. Right: the
icosahedron, a maximal planar graph on $n=12$ vertices with $30=3n-6$ edges, every degree five, so
that the twelve units of $\sum_{v}(6-\deg (v))=12$ sit one on each vertex.}%
\label{Fig_euler}%
\end{figure}
The cap is a triangulated annulus of
nested rings glued along the boundary cycle, which absorbs the curvature and whose vertices are the only ones
whose degrees are not those of the pattern; the apex is one further vertex joined to the whole of the
innermost ring. The apex is not a decoration: without it the innermost ring would still bound a face that is
not a triangle, and the result would not be maximal planar.
A disk containing about $n$ vertices has a boundary of about $\sqrt{n}$ of them, and the cap can be
given any prescribed number of vertices above a threshold, so that the order comes out exactly $n$; the
degrees of all but $O(\sqrt{n})$ vertices are those of the pattern, and the window densities of the
pattern become the window densities of the sphere up to $O(\sqrt{n})$. That is the content of Lemmas
\ref{Lemma_closure} and \ref{Lemma_periodic}, and it is where the error term $O(\sqrt{n})$ of Theorem
\ref{Tm_intro} comes from. The asymptotic problem is thereby reduced to a question with no $n$ in it:
for which $(\delta ,k)$ does a periodic triangulation of the plane with minimum degree $\delta $ and the
extremal degree densities exist? For $\delta \in \{4,5\}$ and every $k$, and for $\delta =3$ with
$k\leq 8$, one does, and Section \ref{Sec_asym} exhibits it. For $\delta =3$ and $k\geq 9$ none does,
for a reason that has nothing to do with counting windows: too many vertices of degree three would force
too many vertices of large degree, and in a maximal planar graph the vertices of large degree are few, because
the faces they would need are few. Lemma \ref{Lemma_excess} makes that precise, and the coefficient
$\frac{k-4}{k-2}$ is what it yields.


The rest of the paper is organised as follows, and since it is longer than the result requires we say
also what a reader in a hurry may pass over.

Section \ref{Sec_prelim} recalls from the companion paper \cite{P1} the window count and the analysis of
its equality cases, in the notation of that paper and with no proof repeated. A reader willing to take
those results on trust needs only the statements of Theorem \ref{Tm_gen} and Proposition
\ref{Prop_equality}: the first is the bound, the second says that equality forces the degrees
$\delta ,\delta +h,\ldots ,\delta +th$ with prescribed multiplicities, and everything in this paper that
asks what an extremal maximal planar graph must look like appeals to it. Lemma \ref{Lemma_gen}, Lemma
\ref{Lemma_bestt} and Theorem \ref{Tm_sequence} may be skipped on a first pass; they say respectively
what the slack in the count is, which value of the free parameter is the best one, and that the
maximised count is the exact minimum of the window number over all integer sequences with the given
order, size and minimum degree.

Section \ref{Sec_planar} works the count out for maximal planar graphs. Section \ref{Sec_planar5} treats
$\delta =5$, which is the case to read first: the count with $t=1$ gives
$\mathrm{MP}(n,5,k)\geq \lceil (kn+12)/(k+1)\rceil $ for $k\geq 1$ and
$\mathrm{MP}(n,5,0)\geq \lceil (n+12)/3\rceil $, and it names the one degree sequence with which
equality can hold (Theorem \ref{Tm_planar}). Section \ref{Sec_hex} is an aside, and the one place in Section \ref{Sec_planar} where a construction,
and with it an upper bound, appears: it improves the remainder in the asymptotic of Caro and West
\cite{CaroWest} for the repetition number of maximal planar graphs of minimum degree five from $o(n)$ to
$O(\sqrt{n})$, by an explicit family of orders $n=3L^{2}+3L+7$, obtained by
flipping a periodic set of rhombi in a hexagonal patch of the triangular lattice and closing it by six
vertices, with $\mathrm{sp}(G,0)=n/3+2\sqrt{n/3}+O(1)$ and a completely described boundary, together
with the exact values for $n\equiv 0\ (\mathrm{mod}\ 3)$, $24\leq n\leq 48$. Nothing later depends on
that section, and a reader interested only in the asymptotics may skip it. Section \ref{Sec_planar34} treats $\delta =3$ and $\delta =4$, where the free parameter
must genuinely be optimised and the statement of Theorem \ref{Tm_planar34} is therefore a list of cases;
its proof is arithmetic, and the reader may take the list as given. The section ends with presenting the exact values of $\mathrm{MP}(n,\delta ,k)$ for $n\leq 14$ by
exhaustive enumeration of maximal planar graphs, and the orders $n\leq 30$ at which a maximal planar graph attaining
the maximised bound was found by a flip search.

Section \ref{Sec_asym} is the heart of the paper and determines $\lim \mathrm{MP}(n,\delta ,k)/n$ for
every pair. It opens with the passage from the plane to the sphere: Lemma \ref{Lemma_closure}, which
closes a triangulated disk by a cap of any prescribed size without disturbing the interior degrees, and Lemma \ref{Lemma_periodic}, which cuts a disk out of a periodic triangulation and applies the closure.
The closure of Lemma \ref{Lemma_closure} ends with an \emph{apex} vertex joined to the innermost ring, and it
is that last vertex which makes the result a maximal planar graph rather than a
disk with an $m$-gonal hole.
The proofs are careful but unsurprising bookkeeping about boundaries; what is used later is only the
statement of Lemma \ref{Lemma_periodic}, and a first reading may stop there. Section \ref{Sec_apex} then
says in detail what the closure does and where the apex vertex sits, with a worked example small enough
to check by hand; it proves nothing and may be skipped by a reader who finds the closure obvious. The
three subsections after it supply the patterns: six of them by hand (Proposition \ref{Prop_hand}), then one family
settling $\delta =4$ for every $k$ by inserting a chain of vertices into every cell of the square grid
(Proposition \ref{Prop_chain4}), then one settling $\delta =5$ for every $k$, whose vertices of large
degree are the hubs of wheels $W_{k+6}$ glued along their rims (Proposition \ref{Prop_wheel}). Section
\ref{Sec_computer} disposes of the seven remaining pairs by triangulations of the torus found by
computer; the point of that section is the format, which is designed so that a machine-found example can
be verified by hand from five printed conditions. Section \ref{Sec_deg3exact} is the one place where a
genuinely planar argument replaces a degree count, and a reader short of time should read it in
preference to the constructions: for $\delta =3$ and $k\geq 9$ the maximised bound is not the optimum. The
bound that is the optimum comes from the excess inequality $\sum_{v}(\deg (v)-6)^{+}\leq 2n-14$ of Lemma
\ref{Lemma_excess}, and matching
periodic triangulations with the three degrees $3$, $4$ and $k+4$ (Proposition \ref{Prop_deg3}) show
that its coefficient $\frac{k-4}{k-2}$ cannot be improved. Unlike the asymptotic statements around it,
the bound of Theorem \ref{Tm_deg3exact} is an inequality valid at every order $n\geq 7$, and below the
range of orders in which it can bite Proposition \ref{Prop_deg3small} determines
$\mathrm{MP}(n,3,k)$ completely. 
The final section collects the whole picture in Theorem \ref{Tm_asym} and reviews what has been obtained and what has not. Asymptotically no pair
$(\delta ,k)$ is left open; what remains open at every pair is the exact value of
$\mathrm{MP}(n,\delta ,k)$ and the best possible error term.


\section{The lower bound of the companion paper}
\label{Sec_prelim}

The bound that Section \ref{Sec_planar} applies is a counting bound, and it is
worth saying in plain words what it counts before the notation begins. Suppose
that no $k+1$ consecutive degrees are shared by more than $s$ vertices. Then at
most $s$ vertices have their degree in $\{\delta ,\ldots ,\delta +k\}$, at most
$s$ further vertices have it in the next $k+1$ values, and so on; a graph with
few vertices in every window is therefore forced to have many vertices of large
degree, and hence a large degree sum. Read backwards, this says that a graph
whose degree sum is small must have a large window count --- and a maximal planar graph has the smallest degree sum that Euler's formula permits, namely
$6n-12$. All the argument needs is a device for converting ``at most $s$ in each
window'' into a single inequality about $\sum_{v}\deg (v)$, and that device is
the weight $u_{t}$ defined below, which charges each vertex according to how far
its degree lies below the level $\delta +t(k+1)$, together with the correction
$\varepsilon _{t}$, which records what is lost by charging whole windows instead
of individual degrees.

Section \ref{Sec_planar} uses three things from what follows: the inequality
itself (Theorem \ref{Tm_gen}), the rule for choosing the parameter $t$ (Lemma
\ref{Lemma_bestt}), and the description of the degree sequences at which the
inequality is tight (Proposition \ref{Prop_equality}), since those are the
sequences one must then try to realise by a maximal planar graph.


We recall the results of \cite{P1} that we use, in the notation of that paper; the proofs are there. Throughout
this section $G$ is an arbitrary graph of order $n$ with $m$ edges, minimum degree $\delta =\delta (G)$, maximum
degree $\Delta =\Delta (G)$ and average degree $d=d(G)=2m/n$. We fix $k\geq 0$ and write $h=k+1$ for the width of a
window. The \emph{aligned windows} are
\[
I_{i}=\{\delta +ih,\ \delta +ih+1,\ \ldots ,\ \delta +ih+k\},\qquad W_{i}=\sum_{j\in I_{i}}n_{j}%
\qquad (i\geq 0),
\]
so that $\sum_{i\geq 0}W_{i}=n$ and, by (\ref{For_window}), $W_{i}\leq \mathrm{sp}(G,k)$ for every
$i$. For an integer $t\geq 1$ put
\[
u_{t}(x)=\sum_{i=0}^{t-1}(t-i)\cdot 1\{x\in I_{i}\},\qquad \varepsilon _{t}(x)=h\,u_{t}%
(x)-\left( \delta +th-x\right) .
\]

We unpack the two definitions. The aligned windows are obtained by starting at
the minimum degree and marching upwards in steps of $h$: $I_{0}$ consists of the
$h$ lowest degrees that can occur at all, $I_{1}$ of the next $h$ values, and so
on, so that the windows partition the possible degrees and every vertex is
counted by exactly one $W_{i}$. These are not the windows that compute
$\mathrm{sp}(G,k)$ in (\ref{For_window}); a window realising the maximum there
may straddle two aligned windows, and usually does. What matters is that each
$W_{i}$ is the count of \emph{some} window, so that $W_{i}\leq \mathrm{sp}(G,k)$,
while at the same time $\sum_{i}W_{i}=n$. A family of windows that is
simultaneously exhaustive and individually bounded is exactly what a counting
argument needs.

The weight $u_{t}(x)$ is easiest to read off the index of the window containing
$x$: if $x\in I_{i}$ with $i<t$ then $u_{t}(x)=t-i$, the number of the windows
$I_{i},I_{i+1},\ldots ,I_{t-1}$ lying at or above the window of $x$, while
$u_{t}(x)=0$ for $x\geq \delta +th$. Thus $h\,u_{t}(x)$ is the distance from the
\emph{bottom} of the window of $x$ up to the level $\delta +th$, whereas
$\delta +th-x$ is the distance from $x$ itself; the difference
$\varepsilon _{t}(x)$ is therefore the height of $x$ above the largest of the
levels $\delta ,\delta +h,\ldots ,\delta +th$ that does not exceed $x$. In
particular $\varepsilon _{t}$ vanishes precisely on those levels, which is the
content of Lemma \ref{Lemma_gen}, and the sum $\sum_{v}\varepsilon _{t}(\deg
(v))$ appearing in Theorem \ref{Tm_gen} measures exactly the extent to which the
degrees of $G$ miss them.


\begin{lemma}
\label{Lemma_gen}For every integer $x\geq \delta $,
\[
\varepsilon _{t}(x)=\left\{
\begin{array}
[c]{ll}%
(x-\delta )\ \mathrm{mod}\ h, & \text{if }\delta \leq x<\delta +th,\\
x-\delta -th, & \text{if }x\geq \delta +th.
\end{array}
\right.
\]
In particular $\varepsilon _{t}(x)\geq 0$, that is $h\,u_{t}(x)\geq \delta +th-x$, with equality if
and only if $x=\delta +ih$ for some $0\leq i\leq t$.
\end{lemma}

\begin{example}
\label{Ex_eps}Take $\delta =3$ and $k=2$, so that $h=3$, and let $t=2$. The two
windows carrying weight are $I_{0}=\{3,4,5\}$ and $I_{1}=\{6,7,8\}$, the levels
are $3$, $6$ and $9$, and the two functions are
\[
\begin{array}
[c]{c|cccccccccc}%
x & 3 & 4 & 5 & 6 & 7 & 8 & 9 & 10 & 11 & 12\\\hline
u_{2}(x) & 2 & 2 & 2 & 1 & 1 & 1 & 0 & 0 & 0 & 0\\
\varepsilon _{2}(x) & 0 & 1 & 2 & 0 & 1 & 2 & 0 & 1 & 2 & 3
\end{array}
\]
What the reader should notice is the sawtooth: below the level $\delta +th=9$
the function $\varepsilon _{2}$ repeats the pattern $0,1,2$ inside each window
and so never exceeds $h-1=2$, whereas from $9$ upwards it grows without bound.
A single vertex of very large degree therefore contributes as much to
$\sum_{v}\varepsilon _{t}(\deg (v))$ as a whole handful of vertices of middling
degree, which is why Theorem \ref{Tm_gen} is strong for graphs with a few
dominant vertices, while its error term disappears entirely for a graph all of
whose degrees sit on the levels.
\end{example}


\begin{theorem}
\label{Tm_gen}Let $G$ be a graph of order $n$ with $m$ edges and minimum degree $\delta $, and let
$k\geq 0$ and $t\geq 1$. Then
\[
\mathrm{sp}(G,k)\ \geq \ \frac{2\left( n\left( \delta +t(k+1)\right) -2m+\sum_{v\in V(G)}%
\varepsilon _{t}(\deg (v))\right) }{(k+1)\,t(t+1)} .
\]
\end{theorem}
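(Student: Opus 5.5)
The plan is to double count the total weight $U=\sum_{v\in V(G)}u_{t}(\deg (v))$, bounding it from above through the window counts $W_{i}$ and from below through the identity defining $\varepsilon _{t}$.

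\emph{Upper bound.} Group the vertices by aligned window. Every vertex has degree at least $\delta $, so it lies in exactly one $I_{i}$ with $i\geq 0$. By definition $u_{t}$ equals $t-i$ on $I_{i}$ for $i<t$ and vanishes for $i\geq t$. Hence
\[
U=\sum_{i=0}^{t-1}(t-i)\,W_{i}.
\]
Each $W_{i}$ counts the vertices in a set of $k+1$ consecutive degrees, so (\ref{For_window}) gives $W_{i}\leq s:=\mathrm{sp}(G,k)$. Therefore
\[
U\leq s\sum_{i=0}^{t-1}(t-i)=s\,\frac{t(t+1)}{2}.
\]

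\emph{Lower bound.} Start from the definition $h\,u_{t}(x)=\delta +th-x+\varepsilon _{t}(x)$, with $h=k+1$, which holds exactly for every integer $x$. Put $x=\deg (v)$ and sum over all vertices. Using $\sum_{v}\deg (v)=2m$ this gives the exact identity
\[
h\,U=n(\delta +th)-2m+\sum_{v}\varepsilon _{t}(\deg (v)).
\]
No inequality is spent at this step; the nonnegativity of $\varepsilon _{t}$ from Lemma \ref{Lemma_gen} is not needed for the theorem itself. It only explains why dropping the $\varepsilon $-sum gives the weaker bound.

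\emph{Conclusion.} Combining the two displays,
\[
n(\delta +th)-2m+\sum_{v}\varepsilon _{t}(\deg (v))=h\,U\leq h\,s\,\frac{t(t+1)}{2}.
\]
Dividing by $h\,t(t+1)/2>0$ gives exactly the stated bound.

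There is no real obstacle here. The only points to check are that the aligned windows partition $[\delta ,\infty )$, so every vertex is counted once, and that each $W_{i}$ is indeed bounded by $\mathrm{sp}(G,k)$. The second holds because $I_{i}$ is a block of $k+1$ consecutive integers, that is, one of the windows in (\ref{For_window}).
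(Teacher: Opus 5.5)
Your proof is correct and is essentially the paper's own argument. You double count the total weight $\sum_{v}u_{t}(\deg (v))$: it is bounded above by $s\,t(t+1)/2$ because each $W_{i}\leq \mathrm{sp}(G,k)$, and it is evaluated exactly through the definition of $\varepsilon _{t}$. The paper phrases the second step as $h\,u_{t}(x)\geq \delta +th-x$ and then keeps track of the slack $\varepsilon _{t}$, which is precisely your identity $h\,U=n(\delta +th)-2m+\sum_{v}\varepsilon _{t}(\deg (v))$.
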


Since $\varepsilon _{t}\geq 0$, Theorem \ref{Tm_gen} contains the bound
\begin{equation}
\mathrm{sp}(G,k)\ \geq \ \frac{2n\left( \delta +t(k+1)-d\right) }{(k+1)\,t(t+1)},
\label{For_clz4}%
\end{equation}
which is the bound (4) of \cite{CaroLauriZarb}. We write
\[
f_{t}=f_{t}(n,m,\delta ,k)=\frac{2\left( n(\delta +t(k+1))-2m\right) }{(k+1)\,t(t+1)}=\frac{2n(t-x)}%
{t(t+1)},\qquad x=\frac{d-\delta }{k+1}\geq 0,
\]
for the right-hand side of (\ref{For_clz4}). Theorem \ref{Tm_gen} holds for every $t\geq 1$, so that one may take
the maximum over $t$.

\begin{example}
\label{Ex_Tmgen}Let $G$ be the maximal planar graph on $v_{1},\ldots ,v_{10}$ drawn in
Figure \ref{Fig_Tmgen}; its sixteen faces are the sixteen triangles of that drawing, the outer one being
$v_{5}v_{6}v_{8}$.

\begin{center}
\includegraphics[width=0.58\textwidth]{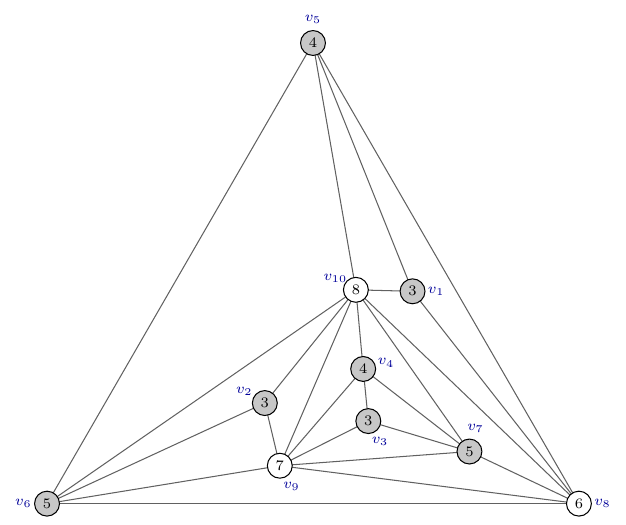}
\captionof{figure}{The triangulation of Example \ref{Ex_Tmgen}. Each vertex carries its degree, with its
name beside it; the seven grey vertices, of degrees $3$, $4$ and $5$, are those counted by the window
$\{3,4,5\}$, so that $W_{0}=\mathrm{sp}(G,2)=7$. The outer face is $v_{5}v_{6}v_{8}$.}%
\label{Fig_Tmgen}%
\end{center}

Its degree sequence is $(3,3,3,4,4,5,5,6,7,8)$, the vertices being listed in
order of increasing degree, so $n=10$, $\delta =3$ and
$\sum_{v}\deg (v)=48=6n-12$. Take $k=2$, hence $h=3$. The windows of
(\ref{For_window}) contain $7,5,4,3,2,1$ vertices for $p=3,4,\ldots ,8$, so that
$\mathrm{sp}(G,2)=7$, realised by the window $\{3,4,5\}$.

With $t=2$ the aligned windows are $I_{0}=\{3,4,5\}$, $I_{1}=\{6,7,8\}$ and
$I_{2}=\{9,10,11\}$, with $W_{0}=7$, $W_{1}=3$ and $W_{2}=0$, and the
contributions of the vertices are
\[
\begin{array}
[c]{c|cccccc}%
x & 3 & 4 & 5 & 6 & 7 & 8\\\hline
n_{x} & 3 & 2 & 2 & 1 & 1 & 1\\
u_{2}(x) & 2 & 2 & 2 & 1 & 1 & 1\\
\varepsilon _{2}(x) & 0 & 1 & 2 & 0 & 1 & 2\\
n_{x}\varepsilon _{2}(x) & 0 & 2 & 4 & 0 & 1 & 2
\end{array}
\]
so that $\sum_{v}\varepsilon _{2}(\deg (v))=9$ and Theorem \ref{Tm_gen} reads
\[
\mathrm{sp}(G,2)\ \geq \ \frac{2\left( 10\cdot 9-48+9\right) }{3\cdot 2\cdot 3}%
=\frac{17}{3}=5.66\ldots ,
\]
whereas (\ref{For_clz4}) by itself gives only $f_{2}=14/3=4.66\ldots $.

With $t=1$ the outcome is better. Now only $I_{0}$ carries weight, $u_{1}$
equals $1$ on $\{3,4,5\}$ and $0$ elsewhere, the values of $\varepsilon _{1}$ at
$3,\ldots ,8$ are again $0,1,2,0,1,2$, their weighted sum is again $9$, and
\[
\mathrm{sp}(G,2)\ \geq \ \frac{2\left( 10\cdot 6-48+9\right) }{3\cdot 1\cdot 2}%
=7,
\]
which is the exact value, although $f_{1}=4$ is smaller than $f_{2}=14/3$. Two
things are worth noticing. First, the error term is not a technicality: without
it the bound is $14/3$ and with it the bound is exact. Second, Lemma
\ref{Lemma_bestt} below governs the choice of $t$ in (\ref{For_clz4}) only; for the
sharper inequality of Theorem \ref{Tm_gen} the best $t$ may be smaller, so for a
concrete graph one should try all small values of $t$. The reason equality holds
here is visible in Proposition \ref{Prop_equality}(i) below: for $t=1$ the only
condition is $W_{0}=\mathrm{sp}(G,2)$, and indeed $W_{0}=7$.
\end{example}

Figure \ref{Fig_window} shows the same example as a window diagram, which is the picture to keep in
mind for the whole of this section.

\begin{figure}[htbp]
\begin{center}
\includegraphics[width=0.86\textwidth]{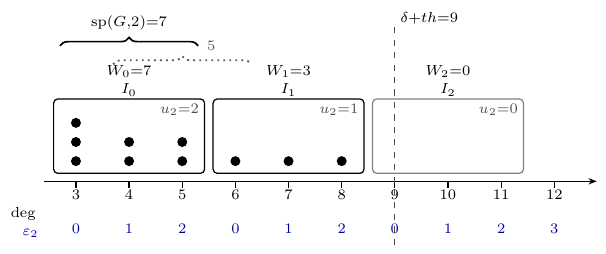}
\end{center}
\caption{The window diagram of Example \ref{Ex_Tmgen} for $k=2$ and $t=2$: the degree histogram, the aligned
windows $I_{0},I_{1},I_{2}$ with their weights $u_{2}$ and their counts $W_{i}$, and the sawtooth
$\varepsilon _{2}$ below the axis, which restarts at $\delta +th=9$ and grows without bound beyond it. The
dotted bracket shows the non-aligned window $\{4,5,6\}$, which catches only five vertices.}%
\label{Fig_window}%
\end{figure}

\begin{lemma}
\label{Lemma_bestt}For real $x\geq 0$ and integer $t\geq 1$ we have $f_{t}\geq f_{t+1}$ if and only
if $t\geq 2x$, with equality if and only if $t=2x$. Hence $\max_{t\geq 1}f_{t}$ is attained at
$t=\max \{1,\left\lceil 2x\right\rceil \}$, also at $t=2x+1$ when $2x$ is an integer, and at no
other $t$.
\end{lemma}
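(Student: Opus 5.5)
The plan is to compute $f_{t}-f_{t+1}$ directly from the closed form $f_{t}=2n(t-x)/(t(t+1))$ and read off its sign; the second assertion then follows from a unimodality argument. Since $n>0$ we may divide by $2n$ and compare $g(t)=(t-x)/(t(t+1))$ with $g(t+1)$. Over the common denominator $t(t+1)(t+2)$ we get
\[
g(t)-g(t+1)=\frac{(t-x)(t+2)-(t+1-x)t}{t(t+1)(t+2)}=\frac{t-2x}{t(t+1)(t+2)} .
\]
Indeed, the numerator expands to $t^{2}+2t-xt-2x-t^{2}-t+xt=t-2x$. The denominator is positive for $t\geq 1$, so $f_{t}\geq f_{t+1}$ if and only if $t\geq 2x$, with equality exactly when $t=2x$. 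This is the first claim.

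For the location of the maximum, I will use this sign information to show that $t\mapsto f_{t}$ is unimodal on the positive integers. The sequence strictly increases while $t<2x$, so $f_{t}<f_{t+1}$ for those $t$. It is constant from $t$ to $t+1$ exactly when $t=2x$, and it strictly decreases for $t>2x$. I then split into cases.

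\begin{itemize}
\item If $2x<1$, every $t\geq 1$ satisfies $t>2x$, so the sequence is strictly decreasing from $t=1$. The unique maximiser is $t=1=\max\{1,\lceil 2x\rceil\}$. This covers $x=0$, where $2x=0$ is an integer but $t=2x+1=1$ gives nothing new.
\item If $2x\geq 1$ and $2x$ is not an integer, let $t_{0}=\lceil 2x\rceil$. Then $f_{1}<\cdots<f_{t_{0}}$ and $f_{t_{0}}>f_{t_{0}+1}>\cdots$, so $t_{0}$ is the unique maximiser.
\item If $2x\geq 1$ and $2x$ is an integer, then $f_{1}<\cdots<f_{2x}=f_{2x+1}>f_{2x+2}>\cdots$. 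The maximum is attained at exactly $t=2x=\lceil 2x\rceil$ and $t=2x+1$.
\end{itemize}

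There is no real obstacle here; the only point needing care is the boundary case $x<1/2$ (including $x=0$). There $\lceil 2x\rceil$ may be $0$ or $1$, so the $\max\{1,\cdot\}$ is needed, and the "also at $2x+1$" clause must be checked to be consistent with the statement when $2x=0$.
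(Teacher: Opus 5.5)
Your proof is correct: the identity $f_{t}-f_{t+1}=2n(t-2x)/\bigl(t(t+1)(t+2)\bigr)$ followed by the unimodality case split is the natural argument for this lemma, whose proof the paper does not reproduce but defers to the companion paper. Your handling of the range $0\leq 2x<1$ is also right, including $x=0$, where both $\max\{1,\lceil 2x\rceil\}$ and the clause $t=2x+1$ reduce to $t=1$.
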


\begin{example}
\label{Ex_bestt}For a maximal planar graph with $\delta =3$ and $k=0$ we have
$h=1$ and $x=d-3=3-12/n$, so that $2x=6-24/n$. This is smaller than $6$ for
every $n$; it equals $4$ at $n=12$ and $5$ at $n=24$, and it is increasing. By
Lemma \ref{Lemma_bestt} the optimal $t$ is therefore $5$ for $13\leq n\leq 23$,
is both $5$ and $6$ at $n=24$, where $2x=5$ is an integer, and is $6$ for every
$n>24$; at $n=12$, where $2x=4$, both $t=4$ and $t=5$ are optimal, and for
$n<12$ the optimal $t$ is smaller still. The corresponding values are
$f_{5}=(2n+12)/15$ and $f_{6}=(n+4)/7$, and indeed
\[
f_{5}(20)=\tfrac{52}{15}>f_{6}(20)=\tfrac{24}{7},\qquad f_{5}(24)=f_{6}%
(24)=4,\qquad f_{5}(25)=\tfrac{62}{15}<f_{6}(25)=\tfrac{29}{7}.
\]
This is where the threshold $n\geq 24$ in Theorem \ref{Tm_planar34}(i) comes
from. Since $2x$ never reaches $6$, the value $t=7$ is never optimal; the best
$t$ is eventually constant, and this is the general phenomenon exploited in
Section \ref{Sec_planar34}: for a fixed pair $(\delta ,k)$ only finitely many
values of $t$ ever occur, and exactly one of them occurs for all large $n$.
\end{example}


The proof of Theorem \ref{Tm_gen} uses only the degree sequence of $G$. We therefore call a
nondecreasing sequence $D=(d_{1}\leq \cdots \leq d_{n})$ of integers with $d_{1}=\delta $ a
\emph{$\delta $-sequence}, and we call the largest number of its terms lying in an interval of $k+1$
consecutive integers its \emph{window number} $\mathrm{sp}(D,k)$; thus $\mathrm{sp}(G,k)=\mathrm{sp}%
(D,k)$ for the degree sequence $D$ of $G$, and Theorem \ref{Tm_gen} holds, with the same proof, for
every $\delta $-sequence with sum $2m$.

\begin{proposition}
\label{Prop_equality}Let $G$ be a graph of order $n$ with $m$ edges and minimum degree $\delta $, let
$k\geq 0$, $t\geq 1$, $h=k+1$ and $s=\mathrm{sp}(G,k)$.

\begin{itemize}
\item[\textrm{(i)}] Equality holds in Theorem \ref{Tm_gen} if and only if $W_{0}=W_{1}=\cdots
=W_{t-1}=s$.

\item[\textrm{(ii)}] Equality holds in (\ref{For_clz4}), that is $s=f_{t}$, if and only if every
degree of $G$ lies in $\{\delta ,\delta +h,\ldots ,\delta +th\}$ and
\[
n_{\delta }=n_{\delta +h}=\cdots =n_{\delta +(t-1)h}=s,\qquad n_{\delta +th}=n-ts\leq s.
\]
In particular equality forces $s=f_{t}$ to be an integer with $n/(t+1)\leq s\leq n/t$, and
$t-1\leq 2x\leq t$.
\end{itemize}
\end{proposition}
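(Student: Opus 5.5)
The plan is to retrace the proof of Theorem \ref{Tm_gen}, which is a chain of two inequalities, and to read off exactly when each step is tight. Write $s=\mathrm{sp}(G,k)$ and $U=\sum_{v}u_{t}(\deg (v))=\sum_{i=0}^{t-1}(t-i)W_{i}$. The first inequality is the upper estimate $U\leq s\,t(t+1)/2$, which comes from $W_{i}\leq s$ for each $i$ together with $t-i>0$ for $i<t$. The second is the identity
\[
h\,U=\sum_{v}\bigl(\delta +th-\deg (v)\bigr)+\sum_{v}\varepsilon _{t}(\deg (v))=n(\delta +th)-2m+\sum_{v}\varepsilon _{t}(\deg (v)).
\]
It follows directly from the definition of $\varepsilon _{t}$ and is exact. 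Combining the two gives Theorem \ref{Tm_gen}, and the only inequality used along the way is the first.

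For (i), the first inequality is a weighted sum of the inequalities $W_{i}\leq s$, $0\leq i\leq t-1$, each with a strictly positive weight $t-i$. So the bound is attained if and only if $W_{i}=s$ for every $i<t$. The identity holds unconditionally, so Theorem \ref{Tm_gen} holds with equality exactly under the condition $W_{0}=\cdots =W_{t-1}=s$, as claimed.

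For (ii), note that $s\geq f_{t}$ is obtained from Theorem \ref{Tm_gen} by discarding $\sum \varepsilon _{t}\geq 0$. Hence $s=f_{t}$ holds if and only if two things happen: equality holds in Theorem \ref{Tm_gen}, and $\sum_{v}\varepsilon _{t}(\deg (v))=0$. By Lemma \ref{Lemma_gen}, $\varepsilon _{t}\geq 0$ vanishes exactly on the levels $\delta +ih$ with $0\leq i\leq t$. So the second condition says every degree lies in $\{\delta ,\ldots ,\delta +th\}$. Since each such level is the unique level in its aligned window, $W_{i}=n_{\delta +ih}$. Then (i) gives $n_{\delta +ih}=s$ for $i<t$, and counting vertices gives $n_{\delta +th}=n-ts$. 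Finally $n_{\delta +th}=W_{t}\leq s$ because $W_{t}$ is a window count.

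For the converse, the stated degree conditions give $\varepsilon _{t}\equiv 0$ and $W_{i}=s$ for $i<t$, so equality holds in both steps. One small point needs checking: the hypothesis must include $s=\mathrm{sp}(G,k)$ itself, not just a common value of the $n_{\delta +ih}$. The stated $s$ is indeed the spread, since any window of width $h$ contains at most one level, so $\mathrm{sp}(G,k)=\max_{i}n_{\delta +ih}=s$ using $n-ts\leq s$.

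For the "in particular" part, $s=f_{t}$ is an integer because it is a cardinality. From $0\leq n-ts\leq s$ we get $n/(t+1)\leq s\leq n/t$. Substituting $f_{t}=2n(t-x)/(t(t+1))$ into $s\leq n/t$ gives $2(t-x)\leq t+1$, that is $2x\geq t-1$. Substituting into $s\geq n/(t+1)$ gives $2(t-x)\geq t$, that is $2x\leq t$. I expect no real obstacle. The only care needed is in (ii): separating the two sources of slack cleanly, and noticing that $W_{t}\leq s$ must be invoked separately, since it is not part of the equality condition of (i).
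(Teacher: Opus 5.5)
Your proof is correct and matches the argument the paper relies on. The paper defers the proof itself to the companion paper, but its sketch of Theorem~\ref{Tm_gen} is exactly your two-sided estimate of $\sum_{v}u_{t}(\deg (v))$, with $\varepsilon _{t}$ recording the slack. So (i) is the tightness of the positively weighted sum of the bounds $W_{i}\leq s$, and (ii) adds the vanishing of every $\varepsilon _{t}(\deg (v))$ via Lemma~\ref{Lemma_gen}. You were also right to invoke $W_{t}\leq s$ separately to get $n-ts\leq s$.
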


\begin{example}
\label{Ex_equality}The two parts of Proposition \ref{Prop_equality} really do
say different things, and one graph exhibits both. Let $G$ be the maximal planar graph
of order $24$ with degree sequence $5^{12}6^{12}$, the dual of the fullerene
with twelve hexagons (Figure \ref{Fig_MP0b}, top).

Take $k=0$ and $t=2$. Then $h=1$, $I_{0}=\{5\}$, $I_{1}=\{6\}$ and
$W_{0}=W_{1}=12=\mathrm{sp}(G,0)$, so (i) gives equality in Theorem
\ref{Tm_gen}. Here also every degree lies in
$\{\delta ,\delta +h,\delta +2h\}=\{5,6,7\}$ with $n_{5}=n_{6}=12=s$ and
$n_{7}=0=n-ts$, so (ii) applies too and $\mathrm{sp}(G,0)=f_{2}=12$: the plain
bound (\ref{For_clz4}) is already exact.

Take instead $k=1$ and $t=1$. Then $h=2$, $I_{0}=\{5,6\}$ and
$W_{0}=24=\mathrm{sp}(G,1)$, so (i) again gives equality in Theorem
\ref{Tm_gen}. But (ii) fails, because the degree $6$ is neither $\delta =5$ nor
$\delta +h=7$. Accordingly (\ref{For_clz4}) gives only $f_{1}=(24+12)/2=18$,
while the twelve vertices of degree $6$ each contribute
$\varepsilon _{1}(6)=1$, so that Theorem \ref{Tm_gen} gives $(24+12+12)/2=24$,
the exact value. The equality sequence of Proposition \ref{Prop_equality}(ii)
for $(\delta ,k)=(5,1)$ at $n=24$ is $5^{18}7^{6}$, of sum
$5\cdot 18+7\cdot 6=132=6\cdot 24-12$, a quite different graph: being extremal
for one $k$ says nothing about being extremal for another.
\end{example}


For $1\leq s\leq n$ put $t=\left\lfloor n/s\right\rfloor $ and $b=n-ts\in \lbrack 0,s)$, and let
the \emph{packed} $\delta $-sequence $P(n,s)$ consist of $s$ copies of each of $\delta ,\delta
+h,\ldots ,\delta +(t-1)h$ and $b$ copies of $\delta +th$; its sum is
\[
\Sigma (n,s)=n\delta +h\sum_{i=1}^{n}\left\lfloor \frac{i-1}{s}\right\rfloor =n\delta +h\left( \frac{%
st(t-1)}{2}+bt\right) .
\]
For $h=1$ these are exactly the \emph{packed lists} of Caro and West \cite{CaroWest}.

For $2m\geq n\delta $ let
\[
s^{\ast }(n,m,\delta ,k)=\min \{s\geq 1:\Sigma (n,s)\leq 2m\},
\]
which is well defined because $\Sigma (n,n)=n\delta $.

In words, $P(n,s)$ is the cheapest $\delta $-sequence of length $n$ whose window
number is at most $s$. One is not allowed more than $s$ terms in any window of
$k+1$ consecutive integers, so the greedy choice is to put $s$ terms at the
smallest admissible value $\delta $, then $s$ at $\delta +h$, then $s$ at
$\delta +2h$, and so on until the terms run out; any other placement can only
raise the sum. The quantity $\Sigma (n,s)$ is that cheapest sum, and it
decreases as $s$ grows, since a larger allowance at each level lets more terms
be placed low. Hence $s^{\ast }$, the least $s$ whose cheapest sequence still
fits inside the budget $2m$, is the least window number available at the given
order and size, which is what Theorem \ref{Tm_sequence} asserts.


\begin{theorem}
\label{Tm_sequence}Let $n\geq 2$, $\delta \geq 0$, $k\geq 0$ and $2m\geq n\delta $. Then
\[
\max_{t\geq 1}\left\lceil f_{t}(n,m,\delta ,k)\right\rceil \ =\ s^{\ast }(n,m,\delta ,k)\ =\ \min
\{\mathrm{sp}(D,k):D\in \mathcal{D}\},
\]
where $\mathcal{D}$ is the set of all $\delta $-sequences of length $n$ with sum $2m$. We call
$s^{\ast }(n,m,\delta ,k)$ the \emph{maximised bound}. Consequently the maximised bound is attained by a graph of order
$n$ with $m$ edges and minimum degree $\delta $ if and only if some $\delta $-sequence with sum
$2m$ and window number $s^{\ast }$ is the degree sequence of a graph.
\end{theorem}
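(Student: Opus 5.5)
The plan is to prove the chain in three pieces: $s^{\ast}\geq \max_t\lceil f_t\rceil$, then $\min_{D\in\mathcal D}\mathrm{sp}(D,k)=s^{\ast}$, and finally $s^{\ast}\leq\max_t\lceil f_t\rceil$.

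\emph{Lower bound.} For $s\geq 1$ put $t=\lfloor n/s\rfloor$ and $b=n-ts$. Recall that $f_t\leq s$ is equivalent to
\[
n(\delta+th)-2m\leq h\,s\,t(t+1)/2 .
\]
A direct computation gives
\[
n(\delta+th)-\Sigma(n,s)=h\left(st(t+1)/2+bt-bt\right)=h\,st(t+1)/2-h\,b\cdot 0 .
\]
More carefully, $nth-h(st(t-1)/2+bt)=h(st+bt)t/t\ldots$, which simplifies to $h\,st(t+1)/2$ exactly. Hence $\Sigma(n,s)\leq 2m$ if and only if $f_t\leq s$ for this particular $t$.

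Now Theorem \ref{Tm_gen} applies to every $\delta$-sequence $D$ with sum $2m$, and it gives $\mathrm{sp}(D,k)\geq f_t$ for all $t$. Since $\mathrm{sp}(D,k)$ is an integer, $\mathrm{sp}(D,k)\geq\max_t\lceil f_t\rceil$. Thus $\min_{\mathcal D}\mathrm{sp}\geq\max_t\lceil f_t\rceil$.

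\emph{Minimum over $\mathcal D$ equals $s^{\ast}$.} Let $D\in\mathcal D$ have window number $s$. Sort $D$ and compare termwise with $P(n,s)$. The $i$-th smallest term satisfies $d_i\geq\delta+h\lfloor (i-1)/s\rfloor$: the terms $d_1,\dots,d_i$ are at least $\delta$, and each window of width $h$ holds at most $s$ of them, so at least $\lceil i/s\rceil$ aligned windows are occupied. Summing over $i$ gives $\Sigma(n,s)\leq 2m$, hence $s\geq s^{\ast}$.

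Conversely, take $s=s^{\ast}$ and start from $P(n,s)$, whose sum is at most $2m$. Raise its largest term until the sum is exactly $2m$. Since $P(n,s)$ has at most $s$ terms per aligned level, it has window number $s$ when $t\geq 1$ levels are full; raising the top term alone does not increase any window beyond $s$. There is one care point: if $b$ copies sit at $\delta+th$, raising one of them may move it into another window, but that only decreases counts. The resulting sequence is still a $\delta$-sequence because $d_1=\delta$ is untouched (for $n\geq 2$ the top term is not $d_1$ unless $s=n$, a trivial case). So $\min_{\mathcal D}\mathrm{sp}=s^{\ast}$.

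\emph{Upper bound.} It remains to show $s^{\ast}\leq\max_t\lceil f_t\rceil$. Let $s=\max_t\lceil f_t\rceil$, and take $t=\lfloor n/s\rfloor$, so $s\geq f_t$. By the identity in the lower-bound step, $\Sigma(n,s)\leq 2m$, so $s^{\ast}\leq s$.

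The main obstacle is exactly that identity: it needs to hold for the specific $t=\lfloor n/s\rfloor$ including the remainder term $b$. If the identity instead reads $n(\delta+th)-\Sigma=h(st(t+1)/2-bt+bt)$, I would verify it using $n=ts+b$. If a residual term $b(t+1-t)$ survives, I would replace it with the convexity fact that $\Sigma(n,s)$ is the piecewise-linear interpolation in $n$. That would give $n(\delta+th)-\Sigma(n,s)\geq h\,st(t+1)/2$ for every integer $t$, with equality at $t=\lfloor n/s\rfloor$ and $t=\lceil n/s\rceil-1$. Both directions would then follow, since Lemma \ref{Lemma_bestt}-type monotonicity in $t$ is not even needed.

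The final sentence of Theorem \ref{Tm_sequence} is immediate. By the equalities just proved, a graph with the given parameters attains $s^{\ast}$ if and only if its degree sequence lies in $\mathcal D$ and has window number $s^{\ast}$.
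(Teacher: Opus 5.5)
Your proof is correct and follows essentially the route the paper indicates; the paper itself defers the full proof to the companion paper. Your termwise bound $d_i\geq \delta +h\lfloor (i-1)/s\rfloor$ shows that $P(n,s)$ is the cheapest $\delta$-sequence with window number at most $s$. Raising its top term gives a member of $\mathcal{D}$ with window number $s^{\ast}$, and the identity $n(\delta+th)-\Sigma(n,s)=h\,st(t+1)/2$ at $t=\lfloor n/s\rfloor$ links $s^{\ast}$ with $\max_t\lceil f_t\rceil$.

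Three points of presentation need fixing:
\begin{itemize}
\item The identity holds exactly: substitute $n=ts+b$, as your first display already does. You should simply delete the hedging paragraph. As written it also has the inequality reversed, since for general $t$ one has $n(\delta+th)-\Sigma(n,s)\leq h\,st(t+1)/2$.
\item In the padding step, the window containing the raised term does gain a vertex. It stays at most $s$ only because that window can meet no level other than the top one, which has lost the vertex.
\item In the last step, state that $1\leq \max_t\lceil f_t\rceil\leq n$. This holds because $f_t\leq 2n/(t+1)$ for all $t\geq 1$ and $f_t>0$ for $t>x$. It guarantees $t=\lfloor n/s\rfloor\geq 1$, so that $f_t$ is defined.
\end{itemize}
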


\begin{example}
\label{Ex_sstar}Let $n=24$, $\delta =3$ and $k=0$, so that $h=1$ and
$2m=6n-12=132$. The packed sequences and their sums are
\[
\begin{array}
[c]{c|l|c}%
s & P(24,s) & \Sigma (24,s)\\\hline
3 & 3^{3}4^{3}5^{3}6^{3}7^{3}8^{3}9^{3}10^{3} & 156\\
4 & 3^{4}4^{4}5^{4}6^{4}7^{4}8^{4} & 132\\
5 & 3^{5}4^{5}5^{5}6^{5}7^{4} & 118
\end{array}
\]
so that $s^{\ast }(24,66,3,0)=4$: three vertices at each of eight consecutive
degrees would cost a degree sum of $156$, which a maximal planar graph of order $24$
cannot afford, whereas four at each of six consecutive degrees costs exactly
$132$. This agrees with $\max_{t}\left\lceil f_{t}\right\rceil =4$, attained at
$t=5$ and at $t=6$ as in Example \ref{Ex_bestt}. Since
$3^{4}4^{4}5^{4}6^{4}7^{4}8^{4}$ is in fact the degree sequence of a
maximal planar graph (Figure \ref{Fig_MP0a}, top), the last sentence of Theorem
\ref{Tm_sequence} gives $\mathrm{MP}(24,3,0)=4$.
\end{example}


Theorem \ref{Tm_sequence} says that the only loss in Theorem \ref{Tm_gen} is graphicality: the maximised bound is
the exact minimum of the window number over all integer sequences with the given $n$, $m$ and $\delta $.

\section{Maximal planar graphs}
\label{Sec_planar}

The bound of Section \ref{Sec_prelim} involves $n$, $m$ and $\delta $ only, and
for a maximal planar graph two of these three are determined: $m=3n-6$,
hence $d=6-12/n$, and $\delta \in \{3,4,5\}$. Applying Section \ref{Sec_prelim}
to this class therefore reduces, for each pair $(\delta ,k)$, to a finite
computation --- the choice of the best $t$ --- and this section is that
computation together with its consequences. It is therefore a section of lower bounds; the matching
constructions are in Section \ref{Sec_asym}, with one exception. The exception is Section \ref{Sec_hex},
which for $(\delta ,k)=(5,0)$ also supplies the family that matches the bound, and does so with an explicit
error term; it is placed here,
and not in Section \ref{Sec_asym}, because it belongs with the case $\delta =5$ that precedes it and
is an aside construction, easy to check and aimed at improving an error term rather than a leading term
already determined. The three minimum degrees behave quite differently. For $\delta =5$ Euler's formula in the form
$\sum_{v}(6-\deg (v))=12$ already forces twelve vertices of degree five, the
average degree is within $12/n$ of $\delta +1$, and a single aligned window does
all the work: the answer is a proportion $k/(k+1)$ of all vertices, and both the
bound and its equality case are settled in a few lines (Section
\ref{Sec_planar5}). The case $k=0$ with $\delta =5$ is the repetition number,
where the proportion drops to $1/3$, as shown in \cite{CaroWest}; Section \ref{Sec_hex} gives an explicit
family showing that $1/3$ is optimal, with a better error term. For $\delta =3$ and $\delta =4$ nothing
is forced beyond the degree sum, the number of windows one should use grows as
$\delta $ and $k$ decrease, and the answer is the list of bounds of Section
\ref{Sec_planar34}.


\subsection{Minimum degree five}
\label{Sec_planar5}

Problem 2 of \cite{CaroLauriZarb} asks for
\[
\mathrm{MP}(n,\delta ,k)=\min \{\mathrm{sp}(G,k):G\text{ a maximal planar graph of order }n%
\text{ with }\delta (G)=\delta \}
\]
for $\delta =3,4,5$ and $k\geq 0$. We treat first the case $\delta =5$, which Theorem
\ref{Tm_gen} of \cite{P1} settles up to the realisability of one degree sequence; the cases $\delta =3,4$ are
treated in Section \ref{Sec_planar34}. A maximal planar graph of
order $n\geq 3$ has $m=3n-6$ edges, so $\sum_{v}\deg (v)=6n-12$, and Euler's formula in the form
$\sum_{v}(6-\deg (v))=12$ shows that a maximal planar graph with $\delta =5$ has at least twelve
vertices of degree $5$.

The whole of the case $\delta =5$ rests on the following observation. Since
$d-\delta =1-12/n<1$, the quantity $x=(d-\delta )/(k+1)$ of Section
\ref{Sec_prelim} satisfies $2x<2/(k+1)\leq 1$ for every $k\geq 1$, so Lemma
\ref{Lemma_bestt} gives $t=1$: one window suffices, and the bound says that at
least a proportion $k/(k+1)$ of the vertices, plus a bounded surplus coming from
the twelve units of curvature, carry their degrees inside a single window. Part
(ii) treats $k=0$ separately, where $2x=2-24/n$ and $t=2$ becomes optimal at
$n=24$. In both parts the error term $\varepsilon $ of Theorem \ref{Tm_gen} is
written out explicitly, and it is what turns ``at least $kn/(k+1)$'' into the
sharper ``at least $(kn+12)/(k+1)$''.


\begin{theorem}
\label{Tm_planar}Let $n\geq 12$ and let $G$ be a maximal planar graph of order $n$ with
$\delta (G)=5$.

\begin{itemize}
\item[\textrm{(i)}] For every $k\geq 1$,
\[
\mathrm{sp}(G,k)\ \geq \ \frac{kn+12+\sum_{v}\varepsilon (\deg (v))}{k+1}\ \geq \ \frac{kn+12}{k+1}%
,
\]
the second inequality being an equality only if $n\equiv 12\ (\mathrm{mod}\ k+1)$, and
where $\varepsilon (x)=x-5$ for $5\leq x\leq k+5$ and $\varepsilon (x)=x-6-k$ for $x\geq k+6$.
Equality forces $\deg (v)\in \{5,k+6\}$ for every $v$, and then
\[
n_{5}=\frac{kn+12}{k+1},\qquad n_{k+6}=\frac{n-12}{k+1}.
\]

\item[\textrm{(ii)}] $\displaystyle \mathrm{sp}(G,0)\geq \frac{n+12+\sum_{v}\max \{0,\deg (v)-7\}}%
{3}\geq \frac{n+12}{3}$, and equality forces $3\mid n$, $n\geq 24$, $\deg (v)\in \{5,6,7\}$ for
every $v$, $n_{5}=n_{6}=\frac{n+12}{3}$ and $n_{7}=\frac{n-24}{3}$.
\end{itemize}

Consequently
\[
\mathrm{MP}(n,5,k)\geq \left\lceil \frac{kn+12}{k+1}\right\rceil \ (k\geq 1),\qquad \mathrm{MP}%
(n,5,0)\geq \left\lceil \frac{n+12}{3}\right\rceil .
\]
\end{theorem}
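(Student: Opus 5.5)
The plan is to apply Theorem \ref{Tm_gen} with $m=3n-6$ and $\delta =5$, choosing $t=1$ in part (i) and $t=2$ in part (ii). After that, the equality cases are read off from Proposition \ref{Prop_equality} and Lemma \ref{Lemma_gen}.

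For (i), take $h=k+1$ and $t=1$. The numerator of Theorem \ref{Tm_gen} is $2\bigl(n(5+k+1)-2(3n-6)+\sum_{v}\varepsilon _{1}(\deg (v))\bigr)=2\bigl(kn+12+\sum_{v}\varepsilon _{1}\bigr)$ and the denominator is $2(k+1)$. This gives the first inequality. By Lemma \ref{Lemma_gen}, $\varepsilon _{1}(x)=(x-5)\bmod (k+1)=x-5$ for $5\leq x\leq k+5$, and $\varepsilon _{1}(x)=x-6-k$ for $x\geq k+6$; this is exactly the stated $\varepsilon $. Since $\varepsilon \geq 0$, the second inequality follows.

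Equality in the second inequality forces $\varepsilon (\deg (v))=0$ for every $v$, that is $\deg (v)\in \{5,k+6\}$. Equality throughout then puts us in the situation of Proposition \ref{Prop_equality}(ii) with $t=1$, which gives $n_{5}=s$ and $n_{k+6}=n-s$. Combining this with the degree sum $5n_{5}+(k+6)n_{k+6}=6n-12$ yields $n_{k+6}=(n-12)/(k+1)$ and $n_{5}=(kn+12)/(k+1)$. Integrality of these counts forces $n\equiv 12 \pmod{k+1}$.

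For (ii), take $k=0$, $h=1$ and $t=2$. The numerator becomes $2\bigl(7n-6n+12+\sum_{v}\varepsilon _{2}\bigr)$ and the denominator is $6$. Lemma \ref{Lemma_gen} with $h=1$ gives $\varepsilon _{2}(x)=0$ for $5\leq x<7$ and $\varepsilon _{2}(x)=x-7$ for $x\geq 7$, which is $\max \{0,\deg (v)-7\}$. Equality forces all degrees into $\{5,6,7\}$. Proposition \ref{Prop_equality}(ii) then gives $n_{5}=n_{6}=s=(n+12)/3$ and $n_{7}=n-2s=(n-24)/3$. Hence $3\mid n$, and $n_{7}\geq 0$ gives $n\geq 24$; the condition $n_{7}\leq s$ holds automatically.

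Both bounds for $\mathrm{MP}$ follow by taking ceilings, since $\mathrm{sp}(G,k)$ is an integer. I expect no serious obstacle. The only point needing care is the equality bookkeeping: one must check that equality in the weakened inequality is exactly the case of Proposition \ref{Prop_equality}(ii), so that the multiplicities are forced rather than merely consistent. One must also verify that the piecewise formula of Lemma \ref{Lemma_gen} reduces to the stated $\varepsilon $ at the boundary value $x=k+6$, where both branches give $0$.
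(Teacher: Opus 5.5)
Your proposal is correct and follows the paper's proof: Theorem~\ref{Tm_gen} with $\delta=5$, $2m=6n-12$ and $t=1$ (respectively $k=0$, $t=2$), then Lemma~\ref{Lemma_gen} to identify $\varepsilon$, then the vertex count and the degree sum to fix the multiplicities. The differences are cosmetic. In (i) the paper does not need Proposition~\ref{Prop_equality}(ii): once all degrees lie in $\{5,k+6\}$, the equations $n_{5}+n_{k+6}=n$ and $5n_{5}+(k+6)n_{k+6}=6n-12$ already determine the counts, and your integrality argument for $n\equiv 12\ (\mathrm{mod}\ k+1)$ in fact uses only these. In (ii) the paper obtains $n_{5}=n_{6}=\mathrm{sp}(G,0)$ from the window weights $2,1$ rather than by citing Proposition~\ref{Prop_equality}(ii).
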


\begin{proof}
Part (i) is Theorem \ref{Tm_gen} with $\delta =5$, $t=1$ and $2m=6n-12$: there
$h=k+1$ and
\[
\frac{2\left( n(5+h)-(6n-12)+\sum_{v}\varepsilon _{1}(\deg (v))\right) }{h\cdot 1\cdot 2}=\frac{%
n(h-1)+12+\sum_{v}\varepsilon _{1}(\deg (v))}{h},
\]
and $h-1=k$; the displayed $\varepsilon $ is $\varepsilon _{1}$ of Lemma \ref{Lemma_gen} with
$\delta =5$. By that lemma $\varepsilon _{1}$ vanishes exactly at $5$ and at $5+h=k+6$, which gives
the equality condition, and then $n_{5}+n_{k+6}=n$ together with $5n_{5}+(k+6)n_{k+6}=6n-12$ yields
the two displayed counts. Part (ii) is Theorem \ref{Tm_gen} with $\delta =5$, $k=0$, $t=2$, where
$h=1$ and $\varepsilon _{2}(x)=\max \{0,x-7\}$; the weights of the two windows are $2$ and $1$, so
equality also forces $n_{5}=n_{6}=\mathrm{sp}(G,0)$, and $n_{5}+n_{6}+n_{7}=n$ together with
$5n_{5}+6n_{6}+7n_{7}=6n-12$ gives the counts.
\end{proof}

\begin{example}
\label{Ex_ico}The smallest instance is the icosahedron: $n=12$ and every degree
equals $5$, so $\mathrm{sp}(G,k)=12=n$ for every $k$, while part (i) gives
$(12k+12)/(k+1)=12$. Equality therefore holds simultaneously for every
$k\geq 1$, the equality conditions being satisfied vacuously with $n_{5}=12$ and
$n_{k+6}=0$.
\end{example}

\begin{example}
\label{Ex_MP5}At $n=24$ part (i) gives $\mathrm{MP}(24,5,1)\geq 18$ with
equality sequence $5^{18}7^{6}$, and part (ii) gives
$\mathrm{MP}(24,5,0)\geq 12$ with equality sequence $5^{12}6^{12}$; the latter
is realised (Figure \ref{Fig_MP0b}, top), so $\mathrm{MP}(24,5,0)=12$.
At $n=27$ the equality sequence for $k=0$ is $5^{13}6^{13}7^{1}$, also realised,
so $\mathrm{MP}(27,5,0)=13$. The numbers show how slowly the asymptotics take
over: the bounds $12$ and $18$ are a half and three quarters of all $24$
vertices, against the limiting proportions $1/3$ and $1/2$.
What should also be noticed is how rigid the equality case is: only two degrees
may occur, namely $5$ and $k+6$, so an extremal maximal planar graph for $k=1$ has no
vertex of degree $6$ at all.
\end{example}


The bound of Theorem \ref{Tm_planar}(i) is attained at $n=12$ by the icosahedron, and for $k=0$ Section
\ref{Sec_hex} gives an explicit family attaining $(n+12)/3$ up to $O(\sqrt{n})$. What matters is that the
equality sequence is realisable asymptotically: Section \ref{Sec_asym} gives the matching upper bound by
showing that $\mathrm{MP}(n,5,k)=\frac{k}{k+1}n+O(\sqrt{n})$ for every sufficiently large $n$ and every
$k\geq 1$.

\subsection{The repetition number of triangulations with minimum degree five: an explicit error
term}
\label{Sec_hex}

We now turn to $k=0$, where Theorem \ref{Tm_planar}(ii) gives
$\mathrm{MP}(n,5,0)\geq \left\lceil (n+12)/3\right\rceil $ with equality only for the degree sequence
$5^{s}6^{s}7^{s-12}$, $s=(n+12)/3$. Caro and West \cite[Theorem 2.5]{CaroWest} showed that the minimum
of $\mathrm{rep}(G)$ over maximal planar graphs with minimum degree five is asymptotic to $n/3$, using rows
of the hexagonal grid with degrees $5,6,7$ in proportion $1:1:1$. The following family makes the
error term explicit and has a completely described boundary.

The idea behind the family can be stated before any coordinates appear. In the
triangular lattice every vertex has degree six, so the repetition number of a
large piece of it is essentially $n$; to bring it down to $n/3$ one must spoil
two thirds of the degrees, and the cheapest way to spoil four degrees at once is
to flip a single edge. Flipping the edge shared by two triangles lowers by one
the two degrees at its ends and raises by one the two degrees opposite to it,
turning four sixes into two fives and two sevens. If the flipped edges are
spread out so that no vertex is touched twice --- this is what the sublattice
$\Lambda $ of index six achieves --- then flipping the right number of them
makes the degrees $5$, $6$ and $7$ occur in the ratio $1:1:1$, which is the
ratio minimising the largest of the three counts. What is then left is the
boundary: the piece has to be closed up into a sphere, and the six extra
vertices of the construction do this at the cost of $O(\sqrt{n})$ vertices of
exceptional degree.


The \emph{hexagonal patch} $H_{L}$ is the subgraph of the triangular lattice induced by
\[
V(H_{L})=\{(i,j)\in \mathbb{Z}^{2}:\left\vert i\right\vert \leq L,\ \left\vert j\right\vert \leq L,\
\left\vert i+j\right\vert \leq L\},
\]
where $(i,j)$ is adjacent to $(i,j)+d$ for every $d$ in
\[
D=\{(1,0),\ (0,1),\ (-1,1),\ (-1,0),\ (0,-1),\ (1,-1)\}.
\]
It has $3L^{2}+3L+1$ vertices and it is a triangulated hexagon with six corners of degree $3$,
$6(L-1)$ further boundary vertices of degree $4$ and $3L^{2}-3L+1$ interior vertices of degree $6$;
the interior vertices form $H_{L-1}$. A \emph{rhombus} is a set
$R(v)=\{v,\ v+(1,0),\ v+(0,1),\ v+(1,-1)\}$: its two triangles share the edge from $v$ to $v+(1,0)$,
and \emph{flipping} this edge, that is replacing it by the edge from $v+(0,1)$ to $v+(1,-1)$, keeps
the graph a triangulation, turns the two degrees at the old edge from $6$ into $5$ and the two at
the new edge from $6$ into $7$ when $R(v)\subseteq H_{L-1}$, and creates no multiple edge because the
two new neighbours are at lattice distance two. We flip rhombi $R(v)$ with $v$ in the lattice
\[
\Lambda =\{(2u,\ u+3w):u,w\in \mathbb{Z}\}
\]
of index $6$. The four vertices of $R(0,0)$ lie in four different cosets of $\Lambda $, since none of
$(1,0)$, $(0,1)$, $(1,-1)$, $(1,-1)-(0,1)=(1,-2)$, $(1,0)-(0,1)=(1,-1)$ and $(1,0)-(1,-1)=(0,1)$ has
an even first coordinate together with a second coordinate congruent to half of it modulo $3$;
hence the rhombi $R(v)$, $v\in \Lambda $, are pairwise disjoint, and they cover two thirds of the
lattice.

Two comments on these definitions may help. The patch $H_{L}$ is the regular
hexagon of the triangular lattice with $L$ steps along each side, described in
the oblique coordinates in which the six neighbours of a vertex are reached by
adding the six vectors of $D$; the columns $i=-L,\ldots ,L$ contain
$L+1,L+2,\ldots ,2L+1,\ldots ,L+2,L+1$ vertices. For $L=2$, for instance, the
columns contain $3,4,5,4,3$ vertices, nineteen in all, of which six are corners
of degree $3$, six more are boundary vertices of degree $4$, and the remaining
seven form $H_{1}$ and have degree $6$.

The lattice $\Lambda $ is generated by $(2,1)$ and $(0,3)$, whose determinant is
$6$, and that is its index; two vertices are congruent modulo $\Lambda $ when
their difference is of the form $(2u,u+3w)$. The verification above says that
the four vertices of a rhombus fall into four different classes, so the
translates $R(v)$, $v\in \Lambda $, are pairwise disjoint and together occupy
four of the six classes, that is two thirds of the lattice. That proportion is
exactly what Theorem \ref{Tm_hex} needs: each flip creates two vertices of
degree $5$ and two of degree $7$ out of four vertices, so producing $n/3$ fives
and $n/3$ sevens requires disjoint rhombi covering $2n/3$ vertices, and no
sparser sublattice would do.

\begin{lemma}
\label{Lemma_rhombi}For $M\geq 1$ the number $\Phi (M)$ of $v\in \Lambda $ with
$R(v)\subseteq H_{M}$ satisfies
\[
6\,\Phi (M)=\left\{
\begin{array}
[c]{ll}%
3M^{2}-M,\ \ 3M^{2}+3,\ \ 3M^{2}-2M-2 & \text{for }M\equiv 0,1,2\ (\mathrm{mod}\ 6),\\
3M^{2}-M,\ \ 3M^{2},\ \ 3M^{2}-2M+1 & \text{for }M\equiv 3,4,5\ (\mathrm{mod}\ 6),
\end{array}
\right.
\]
and in particular $\Phi (M)\geq (M^{2}-M-2)/2$.
\end{lemma}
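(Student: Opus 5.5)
We would prove Lemma \ref{Lemma_rhombi} by turning the condition $R(v)\subseteq H_{M}$ into linear inequalities on $v$ and then counting points of $\Lambda $ column by column.

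\textbf{Step 1: the inequalities.} Write $v=(a,b)$. The four points of $R(v)$ are $(a,b)$, $(a+1,b)$, $(a,b+1)$ and $(a+1,b-1)$. Imposing $|i|\leq M$, $|j|\leq M$ and $|i+j|\leq M$ on each of them, and keeping only the binding constraints, gives
\[
-M\leq a\leq M-1,\qquad -M+1\leq b\leq M-1,\qquad -M\leq a+b\leq M-1 .
\]
So $\Phi (M)$ is the number of points of $\Lambda $ in this lattice hexagon. The hexagon contains about $3M^{2}$ integer points, which accounts for the leading term $3M^{2}/6$.

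\textbf{Step 2: the column count.} The points of $\Lambda $ are those with $a=2u$ and $b\equiv u\pmod 3$.
\begin{itemize}
\item For $a\geq 0$ the admissible $b$ form the interval $[-M+1,\,M-1-a]$, of length $2M-1-a$.
\item For $a<0$ they form the interval $[-M-a,\,M-1]$, of length $2M+a$.
\end{itemize}
In each such interval we count the integers $b\equiv u\pmod 3$. This count is $\lfloor \ell /3\rfloor $ or $\lceil \ell /3\rceil $, where $\ell $ is the number of integers in the interval, and which of the two occurs is decided by the residues of the endpoints and of $u$ modulo $3$. We then sum over the even $a$ in $[-M,M-1]$. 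The parity of $M$ decides whether the end column $a=-M$ is present, and the residue of $M$ modulo $3$ fixes the residue pattern along the columns. This is why the answer is a quasi-polynomial with period $6$.

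\textbf{Step 3: avoiding the residue bookkeeping.} Rather than track all six residue patterns directly, we would organise the computation by periodicity. Replacing $M$ by $M+6$ shifts every interval endpoint by a multiple of $6$, so the residue pattern of the columns is unchanged. The difference $\Phi (M+6)-\Phi (M)$ is therefore a sum of exact thirds of interval lengths: the old columns each gain a fixed number of points, and six new columns are added. This should give
\[
6\bigl(\Phi (M+6)-\Phi (M)\bigr)=36M+108=3(M+6)^{2}-3M^{2},
\]
and this difference is consistent with all six displayed formulas, since within each residue class the linear term is the same for $M$ and $M+6$. It then remains to verify the six base values $M=1,\ldots ,6$ by direct enumeration. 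For example, at $M=1$ only $v=(0,0)$ qualifies, so $6\Phi (1)=6=3+3$, as claimed.

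\textbf{Step 4: the final inequality.} The bound $\Phi (M)\geq (M^{2}-M-2)/2$ is equivalent to $6\Phi (M)\geq 3M^{2}-3M-6$. Each of the six expressions exceeds $3M^{2}-3M-6$ by a nonnegative amount for $M\geq 1$; the worst case is $3M^{2}-2M-2$, with excess $M+4$.

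The main obstacle is Step 3: checking cleanly that the shift $M\mapsto M+6$ preserves the residue pattern, including the lone column $a=-M$ for even $M$ and the two boundary rows $b=\pm (M-1)$. If that argument becomes messy, the fallback is to carry out the column sums explicitly in each of the six residue classes. That computation is routine but long.
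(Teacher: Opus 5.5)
Your Steps~1, 2 and~4 are correct and match the paper's proof. The paper substitutes $i=2u$, $j=u+3w$, turns your $b$-intervals into $w$-intervals with floor endpoints, and evaluates the resulting floor sums in each class of $M$ modulo $6$.

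The gap is in Step~3: the recurrence you announce is false. Within a residue class write $6\Phi(M)=3M^{2}+cM+d$. Then $6(\Phi(M+6)-\Phi(M))=36M+108+6c$, because the linear term $cM$ does not cancel under $M\mapsto M+6$; it contributes $6c$. The displayed formulas have $c=-1,0,-2,-1,0,-2$ for $M\equiv 0,\ldots ,5\pmod 6$. So $36M+108$ is right only for $M\equiv 1,4$, and your consistency check confuses ``same linear coefficient'' with ``same linear term''. Concretely, $\Phi(2)=1$ and $\Phi(8)=29$ give $6(\Phi(8)-\Phi(2))=168$, not $180$. Running your recurrence from $\Phi(2)=1$ would produce $\Phi(8)=31$, which contradicts the lemma.

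The faulty claim is that the increment is a sum of exact thirds of interval lengths. For the old columns it is: both endpoints move by $6$, so each of the $M$ old columns gains exactly four points of $\Lambda$, $4M$ in all. The six new columns are a different matter. These are the even $a$ in $[M,M+5]$ and in $[-M-6,-M-1]$. Their total length is $6M+51$, but for $M\equiv 0,\ldots ,5\pmod 6$ they contain $2M+17,\ 2M+18,\ 2M+16,\ 2M+17,\ 2M+18,\ 2M+16$ points of $\Lambda$ respectively. The rounding in these columns does not cancel, and it is exactly the source of the residue-dependent linear coefficient.

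The periodicity route can be repaired:
\begin{itemize}
\item compute the counts in those six columns for each class of $M$ modulo $6$; within a class the relevant residues are fixed, so this is a finite check;
\item deduce $6(\Phi(M+6)-\Phi(M))=36M+102,\ 36M+108,\ 36M+96,\ 36M+102,\ 36M+108,\ 36M+96$;
\item combine this with the base values $\Phi(1),\ldots ,\Phi(6)=1,1,4,8,11,17$.
\end{itemize}
But the residue bookkeeping you wanted to avoid cannot be avoided; it is only reduced from all columns to six.
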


\begin{proof}
For $v=(i,j)$ the four conditions $v,v+(1,0),v+(0,1),v+(1,-1)\in H_{M}$ amount to
$-M\leq i\leq M-1$, $-M+1\leq j\leq M-1$ and $-M\leq i+j\leq M-1$. Writing $i=2u$ and $j=u+3w$, for
fixed $u\geq 0$ the admissible $w$ form the interval from $-\left\lfloor (M-1+u)/3\right\rfloor $ to
$\left\lfloor (M-1)/3\right\rfloor -u$, and for fixed $u=-t<0$ the interval from
$t-\left\lfloor M/3\right\rfloor $ to $\left\lfloor (M-1+t)/3\right\rfloor $, where
$0\leq u\leq \left\lfloor (M-1)/2\right\rfloor $ and $1\leq t\leq \left\lfloor M/2\right\rfloor $.
Summing the lengths of these intervals and evaluating the floor sums in each residue class of $M$
modulo $6$ gives the six formulas, and $6\Phi (M)-3(M^{2}-M-2)$ equals $2M+6$, $3M+9$, $M+4$,
$2M+6$, $3M+6$, $M+7$ respectively, which is positive.
\end{proof}

\begin{example}
\label{Ex_rhombi}The first values of $\Phi $, obtained both by counting lattice
points directly and from the formula, are
\[
\begin{array}
[c]{c|ccccccccc}%
M & 1 & 2 & 3 & 4 & 5 & 6 & 7 & 8 & 9\\\hline
\Phi (M) & 1 & 1 & 4 & 8 & 11 & 17 & 25 & 29 & 39\\
(M^{2}-M-2)/2 & -1 & 0 & 2 & 5 & 9 & 14 & 20 & 27 & 35
\end{array}
\]
The quantity to keep in mind is the leading term. One has
$\Phi (M)=M^{2}/2+O(M)$ while $H_{M}$ has $3M^{2}+O(M)$ vertices, so the
disjoint rhombi $R(v)$, $v\in \Lambda $, cover $4\Phi (M)=2M^{2}+O(M)$ of them,
two thirds of the patch, as the coset count predicts. The convergence is slow:
the ratio $4\Phi (M)/\left\vert V(H_{M})\right\vert $ is $0.597$ at $M=12$ and
still only $0.652$ at $M=60$. The crude lower bound $(M^{2}-M-2)/2$ is all that
the proof of Theorem \ref{Tm_hex} uses; it is smaller than $\Phi (M)$ by
$O(M)$, which is absorbed in the error term of that theorem.
\end{example}

Figure \ref{Fig_hex3} shows the four rhombi of $H_{3}$ counted by $\Phi (3)=4$, and,
beside them, what a single flip does to the four degrees of one rhombus.

\begin{figure}[htbp]
\begin{center}
\includegraphics[width=0.5\textwidth]{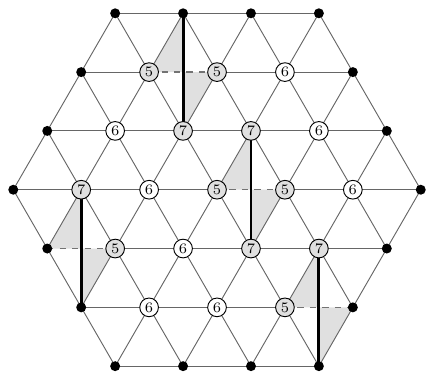}\hfill
\includegraphics[width=0.44\textwidth]{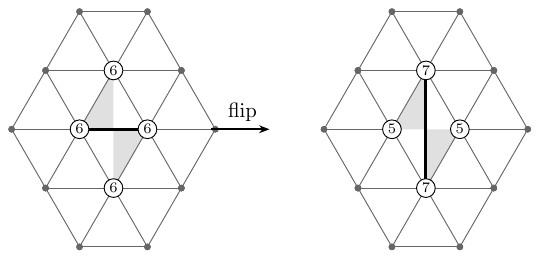}
\end{center}
\caption{Left: the patch $H_{3}$ and the four rhombi $R(v)$, $v\in \Lambda $, that it contains; in each
rhombus the dashed diagonal is flipped to the bold one, and the interior vertices carry their degree after
the flips. Right: a single flip in the lattice, which turns the degrees $6,6,6,6$ of the four vertices of a
rhombus into $5,7,5,7$ and changes no other degree.}%
\label{Fig_hex3}%
\end{figure}


\begin{theorem}
\label{Tm_hex}For every $L\geq 4$ there is a maximal planar graph $G_{L}$ of order $n=3L^{2}+3L+7$
with $\delta (G_{L})=5$ whose degree sequence consists of $5^{L^{2}+3L}\,6^{L^{2}+3L+1}\,7^{L^{2}-3L}$
and six further vertices of degrees $L+3$, $L+3$, $L+4$, $L+4$, $L+4$ and $L+6$. Consequently
\[
\left\lceil \frac{n+12}{3}\right\rceil \ \leq \ \mathrm{MP}(n,5,0)\ \leq \ L^{2}+3L+1\ =\ \frac{n+12}{3}%
+2L-\frac{16}{3}\qquad \text{for }n=3L^{2}+3L+7,
\]
so that $\mathrm{MP}(n,5,0)=n/3+O(\sqrt{n})$ along this family (Caro and West \cite[Theorem 2.5]{CaroWest}
obtain $n/3+o(n)$ for all $n$).
\end{theorem}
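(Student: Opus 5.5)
The plan is to build $G_{L}$ explicitly in three steps: flip a suitable set of rhombi inside the patch $H_{L}$, then close the hexagonal boundary with six new vertices, then count degrees. The degree sequence in the statement pins down what each step must do. The patch $H_{L}$ has $3L^{2}+3L+1$ vertices, so with six added vertices the order is $3L^{2}+3L+7$. The target has $L^{2}-3L$ sevens and $L^{2}+3L=(L^{2}-3L)+6L$ fives among the patch vertices. This suggests the following accounting: all sevens and $L^{2}-3L$ of the fives come from $f=(L^{2}-3L)/2$ flips, which is an integer since $L^{2}-3L$ is even. The remaining $6L$ fives are exactly the boundary vertices of $H_{L}$. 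The leftover $3L^{2}-3L+1-2(L^{2}-3L)=L^{2}+3L+1$ interior vertices keep degree $6$.

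\emph{Step 1 (interior).} I would choose $f$ rhombi $R(v)$ with $v\in \Lambda $ and $R(v)\subseteq H_{L-1}$, and flip them. Lemma \ref{Lemma_rhombi} with $M=L-1$ gives $\Phi (L-1)\geq ((L-1)^{2}-(L-1)-2)/2=(L^{2}-3L)/2=f$, so enough rhombi exist. Because the rhombi lie inside $H_{L-1}$, all their vertices are interior vertices of degree $6$. Because the rhombi are pairwise disjoint (the coset argument before the lemma), no vertex is touched twice. As noted in the definition of flipping, each flip creates no multiple edge. Hence we obtain $2f$ fives and $2f$ sevens, the other interior vertices keep degree $6$, and the boundary degrees $3$ (corners) and $4$ (sides) are unchanged.

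\emph{Step 2 (closure).} Add vertices $a_{1},\ldots ,a_{6}$, one outside each side $s$ of the hexagon. Join $a_{s}$ to all $L+1$ boundary vertices of side $s$, both corners included, and join $a_{s}$ to $a_{s+1}$ (indices modulo $6$). At the corner shared by sides $s$ and $s+1$ this produces the triangle corner$\,a_{s}a_{s+1}$, and along side $s$ it produces triangles $a_{s}$ plus a boundary edge. Each corner gains two neighbours and reaches degree $5$; each side vertex gains one and reaches degree $5$; each $a_{s}$ has degree $(L+1)+2=L+3$. The remaining face is the hexagon $a_{1}\cdots a_{6}$, which I triangulate by the chords $a_{1}a_{3},a_{1}a_{4},a_{1}a_{5}$. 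The degrees of the new vertices become $L+6$ (for $a_{1}$), $L+4,L+4,L+4$ (for $a_{3},a_{4},a_{5}$) and $L+3,L+3$ (for $a_{2},a_{6}$), as required. It remains to check that the result is a simple triangulation of the sphere, hence maximal planar. The chords are new edges; the $a_{s}$ have disjoint boundary neighbourhoods apart from shared corners; and every face is a triangle. As a sanity check, the degree sum is $6n-12$: the edge count outside is $6L+6$ boundary edges plus $9$ edges among the $a_{s}$, which is $3\cdot 6+6L-3$.

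\emph{Step 3 (counting).} Since $L\geq 4$, every new degree is at least $7$, so $\delta =5$. The largest multiplicity is then that of degree $6$, namely $L^{2}+3L+1$. To confirm this I must compare it with the count of $7$, which can rise to at most $L^{2}-3L+2$ when $L=4$, and with the count of $5$, which is $L^{2}+3L$. This gives the upper bound $\mathrm{MP}(n,5,0)\leq L^{2}+3L+1=(n+12)/3+2L-16/3$. The lower bound is Theorem \ref{Tm_planar}(ii), and $2L=O(\sqrt{n})$. The main obstacle I expect is the careful verification in Step 2 that the closure is simple and that every boundary vertex receives exactly the right number of new neighbours, with the corners as the delicate point. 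The interior part is fully handled by Lemma \ref{Lemma_rhombi} and the disjointness of the rhombi.
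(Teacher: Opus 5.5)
Your proposal is correct and follows the paper's proof essentially step for step. Both flip $(L^{2}-3L)/2$ disjoint rhombi inside $H_{L-1}$, which exist by Lemma \ref{Lemma_rhombi} with $M=L-1$. Both then close the hexagon with six vertices, each joined to its side, the six joined cyclically, and the resulting hexagon triangulated by three chords from one of them. Both finally read off that $n_{6}=L^{2}+3L+1$ is the largest multiplicity. Your edge count ($6L+15$ added edges) and your explicit comparison with $n_{7}\leq L^{2}-3L+2$ at $L=4$ are only slightly more detailed versions of what the paper states.
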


\begin{proof}
Start from $H_{L}$ and flip $F=(L^{2}-3L)/2$ of the rhombi $R(v)$, $v\in \Lambda $, contained in
$H_{L-1}$; there are enough of them by Lemma \ref{Lemma_rhombi} with $M=L-1$, since
$\Phi (L-1)\geq ((L-1)^{2}-(L-1)-2)/2=(L^{2}-3L)/2$. The flips are on pairwise disjoint rhombi, so
afterwards $2F=L^{2}-3L$ interior vertices have degree $5$, as many have degree $7$, and the remaining
$3L^{2}-3L+1-4F=L^{2}+3L+1$ interior vertices have degree $6$. Now add six new vertices $w_{0},\ldots
,w_{5}$, one outside each side of the hexagon, join $w_{r}$ to the $L+1$ vertices of its side, join
$w_{r}$ to $w_{r+1}$ for every $r$ (indices modulo $6$), and add the diagonals $w_{0}w_{2}$,
$w_{0}w_{3}$ and $w_{0}w_{4}$. Each side becomes a fan of $L$ triangles, each corner $c$ between the
sides of $w_{r}$ and $w_{r+1}$ lies in the triangle $cw_{r}w_{r+1}$, and the hexagon $w_{0}\cdots w_{5}$
is triangulated by the three diagonals; so the result $G_{L}$ is a maximal planar graph with
$n=3L^{2}+3L+7$ vertices. Every boundary vertex of $H_{L}$ now has degree $5$: a side vertex gains one
neighbour and a corner gains two. Hence $n_{5}=(L^{2}-3L)+6L=L^{2}+3L$, $n_{6}=L^{2}+3L+1$ and
$n_{7}=L^{2}-3L$ among the vertices of $H_{L}$, while $\deg (w_{0})=L+6$, $\deg (w_{1})=\deg
(w_{5})=L+3$ and $\deg (w_{2})=\deg (w_{3})=\deg (w_{4})=L+4$. For $L\geq 4$ these six degrees are at
least $7$ and $n_{7}+2<n_{6}$, so $\mathrm{sp}(G_{L},0)=n_{6}=L^{2}+3L+1$, and $L^{2}+3L+1-(n+12)/3=2L-16/3$.
\end{proof}

Figure \ref{Fig_G4} draws $G_{4}$, the smallest member of the family.

\begin{figure}[htbp]
\begin{center}
\includegraphics[width=0.62\textwidth]{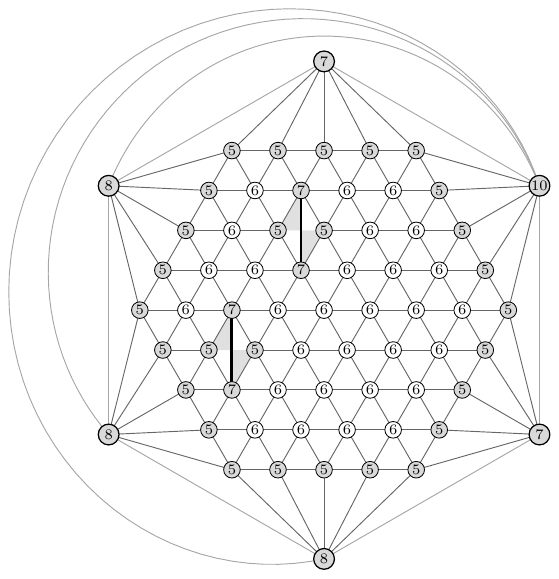}
\end{center}
\caption{The maximal planar graph $G_{4}$ of Theorem \ref{Tm_hex}, of order $n=67$: the patch $H_{4}$ with the two
rhombi $R(-2,-1)$ and $R(-2,2)$ flipped (bold edges) and the six cap vertices $w_{0},\ldots ,w_{5}$. The
three edges $w_{0}w_{2}$, $w_{0}w_{3}$, $w_{0}w_{4}$ are drawn around the outside, because the faces they
bound lie outside the hexagon $w_{0}\cdots w_{5}$. The labels are degrees: $5$ all along the boundary of the
patch, $6$ inside it except for the $5,5,7,7$ of each flipped rhombus, and $10,7,8,8,8,7$ on the caps.}%
\label{Fig_G4}%
\end{figure}

\begin{example}
\label{Ex_hexL4}Take $L=4$, the smallest value the theorem allows. Then $H_{4}$
has $61$ vertices, exactly $F=(16-12)/2=2$ of the $\Phi (3)=4$ available rhombi
inside $H_{3}$ are flipped, six vertices are added, and $n=67$. The degree
sequence of $G_{4}$ is
\[
5^{28}\,6^{29}\,7^{4}\qquad \text{together with}\qquad 7,\ 7,\ 8,\ 8,\ 8,\ 10,
\]
the six exceptional degrees being $L+3=7$ twice, $L+4=8$ three times and
$L+6=10$ once; the sum is $390=6\cdot 67-12$. Hence $n_{5}=28$, $n_{6}=29$,
$n_{7}=6$, $n_{8}=3$, $n_{10}=1$ and
$\mathrm{sp}(G_{4},0)=29=L^{2}+3L+1$, against the lower bound
$\left\lceil (67+12)/3\right\rceil =27$; the gap is $29-79/3=8/3=2L-16/3$.
At $L=4$ the two cap vertices of degree $L+3$ happen to have the same degree as
the interior sevens, which is why $n_{7}=6$ rather than $4$; from $L=5$ on all
six exceptional degrees exceed $7$. In every case the largest count is
$n_{6}=L^{2}+3L+1$, which exceeds $n/3$ by $2L-16/3=2\sqrt{n/3}+O(1)$: the
entire loss sits on the boundary of the patch, whose length is $6L$, and this is
the precise sense in which the family is optimal up to $O(\sqrt{n})$.
\end{example}

The constants are not optimised: the boundary contributes $6L$ vertices of degree $5$, and the flips
are chosen so that $n_{5}$ and $n_{6}$ differ by one. For small orders the bound of Theorem
\ref{Tm_planar}(ii) is exact: a search over the maximal planar graphs under diagonal flips found
maximal planar graphs with degree sequence $5^{s}6^{s}7^{s-12}$, $s=(n+12)/3$, for every
$n\in \{24,27,30,\ldots ,48\}$, so that
\[
\mathrm{MP}(n,5,0)=\frac{n+12}{3}\qquad \text{for every }n\equiv 0\ (\mathrm{mod}\ 3),\ 24\leq n\leq 48;
\]
for $n=24$ this is the dual of a fullerene with twelve hexagons, and the maximal planar graph for $n=27$
has a single vertex of degree $7$; both are drawn in Figure \ref{Fig_MP0b}.
Whether it is
attained for every $n\equiv 0\ (\mathrm{mod}\ 3)$ with $n>48$ remains open.

\subsection{Minimum degrees three and four}
\label{Sec_planar34}

For $\delta =3$ and $\delta =4$ the situation differs in one decisive respect:
Euler's formula gives no lower bound on the number of vertices of minimum
degree, since $\sum_{v}(6-\deg (v))=12$ can be met by a handful of vertices of
small degree together with one vertex of large degree. All that remains is the
degree sum $6n-12$ and the value of $\delta $ itself. The effect is that the
gap $d-\delta $ between the average and the minimum degree is now close to $3$
or to $2$ instead of being close to $1$, so that $x=(d-\delta )/(k+1)$ is large
when $k$ is small and several windows are needed: the optimal $t$ is $6$ for
$(\delta ,k)=(3,0)$ and decreases to $1$ as $k$ grows. What this subsection
does is to carry out $\max_{t}f_{t}$ in each case. The outcome is the list of
bounds of Theorem \ref{Tm_planar34}, each with a threshold on $n$ attached, and
the thresholds are precisely the orders at which the optimal $t$ changes.


For $\delta =3$ and $\delta =4$ the constraint $\sum_{v}(6-\deg (v))=12$ no longer bounds
$n_{\delta }$ from below, and Theorem \ref{Tm_gen} is the only tool we have. With $2m=6n-12$ the
bound (\ref{For_clz4}) reads
\[
\mathrm{sp}(G,k)\ \geq \ f_{t}(n)=\frac{2\left( n(\delta +t(k+1)-6)+12\right) }{(k+1)\,t(t+1)},
\]
and $2x=\left( 2(6-\delta )-24/n\right) /(k+1)$ increases with $n$ towards $2(6-\delta )/(k+1)$, so by
Lemma \ref{Lemma_bestt} the best $t$ is eventually constant. Working out the maximum over $t$ gives
the following bounds, which refine those of \cite{CaroLauriZarb,CaroWest} (see the discussion after the
proof); the equality sequences come from Proposition \ref{Prop_equality}. Each displayed inequality
holds for every $n$, since Theorem \ref{Tm_gen} holds for every $t$; the range in parentheses is where
it is the maximised bound.

\begin{theorem}
\label{Tm_planar34}Let $G$ be a maximal planar graph of order $n$.

\begin{itemize}
\item[\textrm{(i)}] If $\delta (G)=3$, then
\[
\begin{array}
[c]{ll}%
\mathrm{sp}(G,0)\geq \dfrac{n+4}{7}\ \ (n\geq 24), & \mathrm{sp}(G,1)\geq \dfrac{n+4}{4}\ \ (n\geq
12),\\[3mm]
\mathrm{sp}(G,2)\geq \dfrac{n+4}{3}\ \ (n\geq 8), & \mathrm{sp}(G,3)\geq \dfrac{5n+12}{12}\ \ (n\geq
12),\\[3mm]
\mathrm{sp}(G,4)\geq \dfrac{7n+12}{15}\ \ (n\geq 24), & \mathrm{sp}(G,k)\geq \dfrac{(k-2)n+12}{k+1}\
\ (k\geq 5),
\end{array}
\]
with equality only for the degree sequences $3^{s}4^{s}5^{s}6^{s}7^{s}8^{s}9^{n-6s}$,
$3^{s}5^{s}7^{s}9^{n-3s}$, $3^{s}6^{s}9^{n-2s}$, $3^{s}7^{s}11^{n-2s}$, $3^{s}8^{s}13^{n-2s}$ and
$3^{s}(k+4)^{n-s}$ respectively, where $s$ is the value of the bound. Below the stated orders the
bound is $\left\lceil (2n+12)/15\right\rceil $ for $13\leq n\leq 24$ and $k=0$,
$\left\lceil (n+12)/6\right\rceil $ for $7\leq n\leq 12$ and $k=1$, $4$ for $n\leq 8$ and $k=2$,
$\left\lceil (n+12)/4\right\rceil $ for $n\leq 12$ and $k=3$, and $\left\lceil (2n+12)/5\right\rceil $
for $n\leq 24$ and $k=4$; at the remaining small orders ($n\leq 12$ for $k=0$ and $n\leq 6$ for
$k=1$) the maximised bound $s^{\ast }$ is the entry of Table \ref{Tab_MP34small} when it carries no asterisk, and the
entry minus the excess indicated in the caption when it does.

\item[\textrm{(ii)}] If $\delta (G)=4$, then
\[
\begin{array}
[c]{ll}%
\mathrm{sp}(G,0)\geq \dfrac{n+6}{5}\ \ (n\geq 24), & \mathrm{sp}(G,1)\geq \dfrac{n+6}{3}\ \ (n\geq
12),\\[3mm]
\mathrm{sp}(G,2)\geq \dfrac{4n+12}{9}\ \ (n\geq 24), & \mathrm{sp}(G,k)\geq \dfrac{(k-1)n+12}{k+1}\ \
(k\geq 3),
\end{array}
\]
with equality only for $4^{s}5^{s}6^{s}7^{s}8^{n-4s}$, $4^{s}6^{s}8^{n-2s}$, $4^{s}7^{s}10^{n-2s}$
and $4^{s}(k+5)^{n-s}$ respectively. Below the stated orders the bound is
$\left\lceil (n+12)/6\right\rceil $ for $12\leq n\leq 24$ and $4$ for $8\leq n\leq 12$ when $k=0$,
$6$ for $6\leq n\leq 12$ when $k=1$, and $\left\lceil (n+12)/3\right\rceil $ for $n\leq 24$ when
$k=2$; for $n=6,7$ and $k=0$ it is $6$ and $5$.
\end{itemize}

In particular $\mathrm{MP}(n,3,0)\geq \left\lceil (n+4)/7\right\rceil $ and $\mathrm{MP}(n,4,0)\geq
\left\lceil (n+6)/5\right\rceil $ for $n\geq 24$.
\end{theorem}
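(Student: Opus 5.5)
Every displayed inequality will come from the bound (\ref{For_clz4}) with $2m=6n-12$ and a suitable $t$, so the plan is mostly a calculation. For each pair $(\delta ,k)$ we have
\[
f_{t}(n)=\frac{2\left( n(\delta +t(k+1)-6)+12\right) }{(k+1)\,t(t+1)},
\]
and Theorem \ref{Tm_gen} gives $\mathrm{sp}(G,k)\geq f_{t}(n)$ for every $t\geq 1$ and every $n$. So each inequality holds at every order once the right $t$ is chosen. The ranges in parentheses, and the statement that these are the maximised bounds there, come from Lemma \ref{Lemma_bestt}. With $2x=\left( 2(6-\delta )-24/n\right) /(k+1)$, the optimal $t$ is $\max \{1,\lceil 2x\rceil \}$. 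The thresholds on $n$ are the orders where $2x$ crosses an integer, namely $n=24/(2(6-\delta )-j(k+1))$ for integers $j$.

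For $\delta =3$ the limit of $2x$ is $6/(k+1)$, which gives $t=6,3,2,2,2$ for $k=0,\ldots ,4$ and $t=1$ for $k\geq 5$. Substituting into $f_t$:
\begin{itemize}
\item $t=6,k=0$: $f_{6}=(n+4)/7$.
\item $t=3,k=1$: $f_{3}=(n+4)/4$.
\item $t=2$, $k=2,3,4$: $f_{2}=2(n+12)/(3(k+1)\cdot 2)\cdot$, which I will recompute to get $(n+4)/3$, $(5n+12)/12$ and $(7n+12)/15$.
\item $t=1$: $((k-2)n+12)/(k+1)$.
\end{itemize}
The thresholds follow from the same formula; for example, $k=0$ changes at $n=24$ and $k=4$ at $24/(6-5)=24$. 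For $\delta =4$ the limit is $4/(k+1)$, giving $t=4,2,2$ for $k=0,1,2$ and $t=1$ for $k\geq 3$. I will do the same substitutions there.

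The small-order values listed in the statement are the maxima of $\lceil f_{t}\rceil$ over the $t$ that are optimal for smaller $2x$, computed the same way; $\max_{t}\lceil f_{t}\rceil =s^{\ast }$ by Theorem \ref{Tm_sequence}. For the entries deferred to Table \ref{Tab_MP34small}, I will compute $s^{\ast }$ directly as the least $s$ with $\Sigma (n,s)\leq 6n-12$. Where the table entry is flagged, it differs from $s^{\ast }$ because the packed sequence is not realisable.

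The equality sequences come from Proposition \ref{Prop_equality}(ii) applied with the chosen $t$ and $h=k+1$. The degrees must lie in $\{\delta ,\delta +h,\ldots ,\delta +th\}$, with $s$ copies of each of the first $t$ levels and $n-ts$ copies of the top level. For instance, $\delta =3,k=1,t=3$ gives $3^{s}5^{s}7^{s}9^{n-3s}$, and $\delta =4,k\geq 3,t=1$ gives $4^{s}(k+5)^{n-s}$.

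The sequence $3^{s}4^{s}5^{s}6^{s}7^{s}8^{s}9^{n-6s}$ for $k=0$ is exactly what Proposition \ref{Prop_equality}(ii) gives. I only need to check that $n-ts\leq s$, which Proposition \ref{Prop_equality} already supplies.

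The main obstacle is bookkeeping rather than ideas. The threshold endpoints need care, because at the integer crossings two values of $t$ tie (as in Example \ref{Ex_bestt}); there both adjacent ranges are correct, and I will state this explicitly. The genuinely delicate part is the tiny orders ($n\leq 12$ for $k=0$ with $\delta =3$, and similar), where $\lceil 2x\rceil$ is small and the ceilings interact. There I will verify each case directly by the packed-sequence computation of Theorem \ref{Tm_sequence} rather than by the $f_t$ formulas.
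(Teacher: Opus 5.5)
Your proposal is correct and follows the paper's own proof. Like the paper, you apply (\ref{For_clz4}) with $2m=6n-12$, choose $t$ by Lemma \ref{Lemma_bestt} so that the thresholds are the orders where $2x$ crosses an integer, and read the equality sequences off Proposition \ref{Prop_equality}(ii). One slip to fix: your intermediate formula for $t=2$ is garbled and should read $f_{2}=((2k-1)n+12)/(3(k+1))$, which does give the three values $(n+4)/3$, $(5n+12)/12$ and $(7n+12)/15$ that you state.
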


\begin{proof}
For $\delta =3$ we have $f_{t}(n)=\left( 2n(th-3)+24\right) /(ht(t+1))$ with $h=k+1$. For $k=0$,
$f_{5}=(2n+12)/15$, $f_{6}=(n+4)/7$ and $f_{7}=(n+3)/7$; here $f_{6}\geq f_{5}$ is equivalent to
$n\geq 24$, $f_{6}>f_{7}$ always, and no other $t$ competes by Lemma \ref{Lemma_bestt}, since
$2x=6-24/n\in (5,6)$ for $n>24$. For $k=1$, $2x=3-12/n$, $f_{3}=(n+4)/4$ and $f_{2}=(n+12)/6$, with
$f_{3}\geq f_{2}$ exactly for $n\geq 12$. For $k=2$, $2x=2-8/n$, $f_{2}=(n+4)/3$ and $f_{1}=4$, with
$f_{2}\geq f_{1}$ exactly for $n\geq 8$. For $k=3$, $2x=\frac{3}{2}-\frac{6}{n}$, $f_{2}=(5n+12)/12$
and $f_{1}=(n+12)/4$, with $f_{2}\geq f_{1}$ exactly for $n\geq 12$. For $k=4$,
$2x=\frac{6}{5}-\frac{24}{5n}$, $f_{2}=(7n+12)/15$ and $f_{1}=(2n+12)/5$, with $f_{2}\geq f_{1}$
exactly for $n\geq 24$. For $k\geq 5$ we have $2x<1$, so $t=1$ and $f_{1}=((k-2)n+12)/(k+1)$.

For $\delta =4$, $f_{t}(n)=\left( 2n(th-2)+24\right) /(ht(t+1))$. For $k=0$, $2x=4-24/n$,
$f_{4}=(n+6)/5$, $f_{3}=(n+12)/6$, $f_{5}=(n+4)/5$ and $f_{2}=4$, with $f_{4}\geq f_{3}$ exactly for
$n\geq 24$ and $f_{3}\geq f_{2}$ exactly for $n\geq 12$. For $k=1$, $2x=2-12/n$, $f_{2}=(n+6)/3$,
$f_{1}=6$ and $f_{3}=(n+3)/3$. For $k=2$, $2x=(4-24/n)/3<4/3$, $f_{2}=(4n+12)/9$ and
$f_{1}=(n+12)/3$, with $f_{2}\geq f_{1}$ exactly for $n\geq 24$. For $k\geq 3$, $2x<1$, so $t=1$
and $f_{1}=((k-1)n+12)/(k+1)$.

The equality sequences are those of Proposition \ref{Prop_equality}(ii): $n_{\delta +ih}=s$ for
$i<t$ and $n_{\delta +th}=n-ts$, and the requirement $n-ts\geq 0$ is exactly the lower bound on $n$
in each case.
\end{proof}

\begin{example}
\label{Ex_planar34}The smallest instance of the first bound in each part is
drawn in Figure \ref{Fig_MP0a}. For $\delta =3$, $k=0$ and $n=24$ the bound is
$(24+4)/7=4$ and the equality sequence is $3^{4}4^{4}5^{4}6^{4}7^{4}8^{4}$, of
sum $132=6\cdot 24-12$: six consecutive degrees, each carried by exactly four
vertices, so that no degree is repeated more than four times. For $\delta =4$,
$k=0$ and $n=24$ the bound is $(24+6)/5=6$ with equality sequence
$4^{6}5^{6}6^{6}7^{6}$, again of sum $132$. Both are realised, so
$\mathrm{MP}(24,3,0)=4$ and $\mathrm{MP}(24,4,0)=6$. A case in which the bound
is much larger is $\delta =4$, $k=2$, $n=24$, where $(4\cdot 24+12)/9=12$ and
the equality sequence is $4^{12}7^{12}$: half the vertices of such a
maximal planar graph have degree exactly four, and it is realised as well (Table
\ref{Tab_MP34}).
\end{example}

\begin{remark}
\label{Rem_thresholds}The threshold attached to each bound admits two
readings, and it is instructive that they agree. On the one hand it is the
order at which the optimal $t$ changes, computed from Lemma \ref{Lemma_bestt} as
in Example \ref{Ex_bestt}. On the other hand it is the order at which the
equality sequence of Proposition \ref{Prop_equality}(ii) first exists, since
that sequence needs $n-ts\geq 0$ terms at the top level. For
$(\delta ,k)=(3,0)$, where $t=6$ and $s=(n+4)/7$, one has $n-6s=(n-24)/7$; for
$(3,1)$, $n-3s=(n-12)/4$; for $(3,2)$, $n-2s=(n-8)/3$; for $(3,3)$,
$n-2s=(n-12)/6$; for $(3,4)$, $n-2s=(n-24)/15$; for $(4,0)$, $n-4s=(n-24)/5$;
for $(4,1)$, $n-2s=(n-12)/3$; and for $(4,2)$, $n-2s=(n-24)/9$. In each case
the numerator vanishes exactly at the threshold stated in the theorem.
\end{remark}


Caro and West \cite[Theorem 2.5]{CaroWest} proved that the minimum of $\mathrm{rep}(G)$ over
maximal planar graphs with minimum degree $3$, respectively $4$, is asymptotic to $n/7$, respectively
$n/5$, by hexagonal tilings with $\mathrm{rep}(G)\leq n/7+O(\sqrt{n})$ and $n/5+O(\sqrt{n})$;
Theorem \ref{Tm_planar34} gives the exact linear terms $\left\lceil (n+4)/7\right\rceil $ and
$\left\lceil (n+6)/5\right\rceil $, and the computations below show that they are attained for every
$n\leq 30$. The first bound of \cite[Theorem 2.1(1)]{CaroLauriZarb} is $nh/(2d-2\delta +h)$ with
$d=6-12/n$; the bounds of Theorem \ref{Tm_planar34} exceed it by an amount tending to a positive
constant, and by an amount growing linearly in $n$ when the optimal $t$ is not the one implicit in
\cite{CaroLauriZarb}, for instance for $\delta =3$, $k=3$ and for $\delta =4$, $k=2$.

\begin{remark}
\label{Rem_clz}The two kinds of improvement are worth separating numerically.
When the optimal $t$ is the one implicit in \cite{CaroLauriZarb}, the gain is a
constant: for $\delta =5$ and $k=1$ that bound is $n^{2}/(2n-12)=n/2+3+O(1/n)$
whereas Theorem \ref{Tm_planar} gives $(n+12)/2=n/2+6$, a gain tending to $3$,
and for $\delta =3$ and $k=5$ the gain tends to $1$. When the optimal $t$ is
different, the gain is linear: for $\delta =3$ and $k=3$ the two coefficients of
$n$ are $5/12$ and $2/5$, so the gain is $n/60+O(1)$, and for $\delta =4$ and
$k=2$ they are $4/9$ and $3/7$, so the gain is $n/63+O(1)$. At the orders one
can actually search the linear gain is still small --- it is $0.46$ and $0.15$
respectively at $n=30$ --- which is why the difference between the two bounds
shows itself only in the asymptotics of Section \ref{Sec_asym}.
\end{remark}


Notice also that peeling, which for maximal outerplanar graphs improves the window count at once, gives nothing here
as long as one only counts degrees. The vertices of degree $3$ of a maximal planar graph $G$ of order $n\geq 5$
form an independent set $T$, each of them sits in a face of the maximal planar graph $G^{\prime }=G-T$, and
$\deg _{G}(u)=\deg _{G^{\prime }}(u)+a(u)$, where $a(u)$ is the number of faces at $u$ which received a
vertex of $T$; but Euler's formula for $G^{\prime }$ is equivalent to Euler's formula for $G$, so
peeling yields no inequality beyond the degree sum. What peeling does yield is a bound on the \emph{number} of
vertices of degree three, Lemma \ref{Lemma_deg3count} below; that inequality is useless for small $k$, where
the extremal density of the degree three is below $2/3$ anyway, but for $k\geq 9$ it is exactly what shows
that the window count is \emph{not optimal} (Section \ref{Sec_deg3exact}). Vertices of degree $4$ may even be
adjacent, as in the octahedron, so for $\delta =4$ there is no analogous peeling at all.

\begin{lemma}
\label{Lemma_deg3count}Let $G$ be a maximal planar graph with $n\geq 5$ vertices. Then the vertices of
degree three of $G$ are pairwise non-adjacent, no face of $G$ contains two of them, and
\[
n_{3}\ \leq \ \left\lfloor \frac{2n-4}{3}\right\rfloor .
\]
\end{lemma}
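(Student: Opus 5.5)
We prove the three assertions in the order stated, since each rests on the one before. Let $v$ have degree three in $G$, with $n\geq 5$. In a triangulation the neighbourhood of $v$ induces a cycle, the link of $v$, here a triangle $abc$. If $abc$ were also a face of $G$, then $\{v,a,b,c\}$ would already form a closed triangulated sphere, namely $K_{4}$. Since $G$ is connected, that would force $n=4$. Hence for $n\geq 5$ the triangle $abc$ is a separating triangle, and $v$ is the only vertex strictly inside it.

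\emph{Non-adjacency.} Suppose $u$ is a neighbour of $v$, say $u=a$, and $\deg(u)=3$. The neighbours of $a$ include $v,b,c$, so $N(a)=\{v,b,c\}$. Every vertex of $G$ is then $v$, $a$, $b$, $c$, or lies outside the triangle $abc$. Consider the faces around $a$: they are $avb$, $avc$ and one further face, which must be $abc$ because the link of $a$ is the triangle $vbc$. But $abc$ separates $v$ from the rest of the graph, so it is not a face when $n\geq 5$. This contradiction shows that $a$ cannot have degree three.

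\emph{No face contains two of them.} This is immediate from non-adjacency, since two vertices of a common triangular face are adjacent.

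\emph{The counting bound.} We peel, as in the remark preceding the lemma. Let $T$ be the set of vertices of degree three, so that $|T|=n_{3}$. Deleting a vertex $v\in T$ removes three edges and merges its three incident faces into the single triangle $abc$. The graph $G-v$ is therefore again maximal planar, with $n-1\geq 4$ vertices.

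Because $T$ is independent and its members lie strictly inside distinct separating triangles, we may delete all of $T$ at once. This gives a maximal planar graph $G'=G-T$ with $n'=n-n_{3}$ vertices. Each vertex of $T$ sits in a distinct face of $G'$: two of them cannot share a link triangle, since that triangle would contain only one of them. Hence
\[
n_{3}\leq f(G')=2(n-n_{3})-4,
\]
which gives $3n_{3}\leq 2n-4$, and integrality yields $n_{3}\leq\lfloor(2n-4)/3\rfloor$. The relation $f(G')=2n'-4$ requires $n'\geq 3$, and this holds because $n_{3}\leq(2n-4)/3$ forces $n'\geq(n+4)/3\geq 3$.

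The step that requires care is that deleting $T$ leaves a maximal planar graph whose faces receive the vertices of $T$ injectively. Both properties follow from the separating-triangle observation together with independence. If $v,w\in T$ had the same link $abc$, then $v$ and $w$ would lie on opposite sides of $abc$, and $G$ would be exactly $\{v,w,a,b,c\}$. That graph has $n=5$ and $n_{3}=2\leq\lfloor 6/3\rfloor$, but it is the triangular bipyramid, in which $a,b,c$ have degree four. The "distinct faces" claim therefore holds after removing one of $v,w$ first, and the count remains correct. In general a shared link occurs only in this $n=5$ case, which we check directly.
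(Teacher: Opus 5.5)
Your proof is correct in substance, and the first two parts follow the paper's argument. The paper shows that a degree-three neighbour of a degree-three vertex would make the four triangles on $\{u,v,x,y\}$ all the faces of $G$, forcing $n=4$. That is your observation that the link $abc$ would then be a face.

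The counting step takes a different and longer route. The paper stays inside $G$: each degree-three vertex lies in exactly three faces, and no face contains two such vertices. So these triples are pairwise disjoint among the $2n-4$ faces, and $3n_{3}\leq 2n-4$ follows at once. You instead peel $T$ off, argue that $G'=G-T$ is a triangulation, inject $T$ into the faces of $G'$, and apply $f(G')=2(n-n_{3})-4$. Each deletion merges three faces into one, so $f(G')=f(G)-2n_{3}$, and your inequality $n_{3}\leq f(G')$ is literally $3n_{3}\leq f(G)$. Peeling does give the structural picture the paper mentions before the lemma: $G$ is $G-T$ with at most one vertex stacked into each face. But it creates side obligations that the direct count never meets.

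Two of those obligations are handled loosely.

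Your justification of $n'\geq 3$ is circular, since you deduce it from $n_{3}\leq (2n-4)/3$, the bound being proved. The repair is one line: if $T\neq\emptyset$, the three neighbours of any vertex of $T$ lie outside $T$ by independence.

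Your shared-link paragraph contradicts your earlier claim that two vertices of $T$ cannot share a link triangle. It then patches this with a phrase (``after removing one of $v,w$ first'') that has no clear meaning. In the bipyramid, $G'=K_{3}$ and $v,w$ simply occupy its two distinct faces, so the injection holds there too. The clean reason for injectivity is that the face of $G'$ containing $v\in T$ is the union of the three faces of $G$ at $v$, and these triples are disjoint. That is exactly the paper's count.
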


\begin{proof}
\emph{Independence.} Let $u$ have degree three. Since $G$ is a maximal planar graph, the neighbours of $u$ form a
cycle, here a triangle $xyv$, and the three faces at $u$ are $uxy$, $uyv$ and $uvx$. Suppose some neighbour
of $u$, say $v$, also has degree three. The neighbours of $v$ include $u$, $x$ and $y$, and there are only
three of them, so $N(v)=\{u,x,y\}$ and the three faces at $v$ are $vxy$, $vyu$ and $vux$. The six faces just
listed are the four distinct triangles $uxy$, $uyv$, $uvx$ and $vxy$, and every edge of $G[\{u,v,x,y\}]$ lies
in two of them; so these four triangles are all the faces of $G$, whence $|V(G)|=4$. This contradicts
$n\geq 5$, and the vertices of degree three are independent.

\emph{At most one to a face.} A face is a triangle, so any two of its vertices are adjacent; by the
foregoing it cannot contain two vertices of degree three.

\emph{Counting.} A vertex of degree three lies in exactly three faces. By the previous paragraph the triples
of faces belonging to distinct vertices of degree three are pairwise disjoint, so $3n_{3}$ is at most the
number $2n-4$ of faces of $G$, and $n_{3}\leq (2n-4)/3$. As $n_{3}$ is an integer the floor may be taken.
\end{proof}

For similar results see \cite{CaroRoditty}, where it is proved that for a maximal planar graph $G$ on $n\geq 4$
vertices with minimum degree $\delta \in \{3,4,5\}$ one has $\alpha (G)\leq (2n-4)/\delta $, which is sharp;
for a characterization of equality see \cite{Bickle}.

\begin{remark}
\label{Rem_peel}The inequality $n_{3}\leq (2n-4)/3$ of Lemma \ref{Lemma_deg3count} is a genuine
restriction on the density of the degree
three: no more than two thirds of the vertices of a maximal planar graph can have
degree three. Comparing it with the equality sequences of Theorem
\ref{Tm_planar34}(i) explains where the case distinction at $k=9$ comes from.
For $k\geq 5$ that sequence is $3^{s}(k+4)^{n-s}$ with $s=((k-2)n+12)/(k+1)$, so
the density of the degree three at equality is $(k-2)/(k+1)$, and
\[
\frac{k-2}{k+1}>\frac{2}{3}\quad \Longleftrightarrow \quad 3k-6>2k+2\quad
\Longleftrightarrow \quad k\geq 9.
\]
The first values are $1/2$, $4/7$, $5/8$, $2/3$ and $7/10$ for
$k=5,6,7,8,9$: at $k=8$ the two densities agree exactly, and from $k=9$ onwards
the equality sequence asks for more vertices of degree three than a
maximal planar graph can possess. That is the whole reason why $k=8$ is the last value
for which the bound is asymptotically tight, and Section \ref{Sec_deg3exact}
makes it quantitative. For $\delta =4$ there is no such obstruction, and the
octahedron shows why: it is a maximal planar graph all six of whose vertices have
degree four, so the density of the degree four can be $1$.
\end{remark}


How close are constructions? We enumerated all maximal planar graphs with $n\leq 13$ vertices
(their numbers $1,1,2,5,14,50,233,1249,7595,49566$ for $n=4,\ldots ,13$ agree with sequence
A000109 of \cite{OEIS}) and took the minimum of $\mathrm{sp}(G,k)$ over those with minimum degree
exactly $\delta $; Table \ref{Tab_MP34small} lists the exact values for $k\leq 3$ and $n\leq 14$. The row $n=14$ required extending the
enumeration to $339\,722$ maximal planar graphs; it contains two entries one above the maximised bound,
$\mathrm{MP}(14,3,3)=8$ and $\mathrm{MP}(14,4,2)=10$. For
$8\leq n\leq 30$ we searched over the maximal planar graphs under diagonal flips; Table \ref{Tab_MP34}
records the orders at which a maximal planar graph attaining the bound of Theorem \ref{Tm_planar34} was
found, each certified by its rotation system. The two extremal triangulations of order $24$ with
$k=0$, of degree sequences $3^{4}4^{4}5^{4}6^{4}7^{4}8^{4}$ and $4^{6}5^{6}6^{6}7^{6}$, are drawn in Figure
\ref{Fig_MP0a}, and two extremal maximal planar graphs with $\delta =5$ in Figure \ref{Fig_MP0b}; all certificates
(rotation systems) are
available from the authors.

\begin{figure}[p]
\begin{center}
\includegraphics[width=0.62\textwidth]{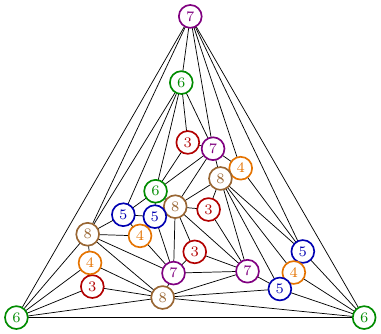}\\[14pt]
\includegraphics[width=0.62\textwidth]{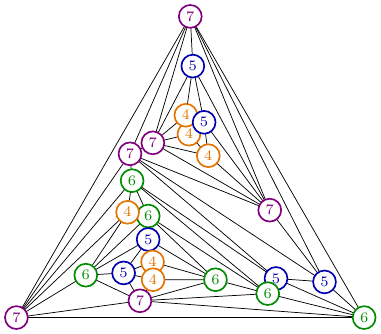}
\end{center}
\caption{Maximal planar graphs of order $24$ attaining the bound of Theorem \ref{Tm_planar34} for $k=0$, each vertex
labelled with its degree. Top: $\delta =3$, degree sequence $3^{4}4^{4}5^{4}6^{4}7^{4}8^{4}$,
$\mathrm{sp}(G,0)=4=\left\lceil (n+4)/7\right\rceil $. Bottom: $\delta =4$, $4^{6}5^{6}6^{6}7^{6}$,
$\mathrm{sp}(G,0)=6=\left\lceil (n+6)/5\right\rceil $. Both drawings are straight-line embeddings with the
outer face a triangle.}%
\label{Fig_MP0a}%
\end{figure}

\begin{figure}[p]
\begin{center}
\includegraphics[width=0.62\textwidth]{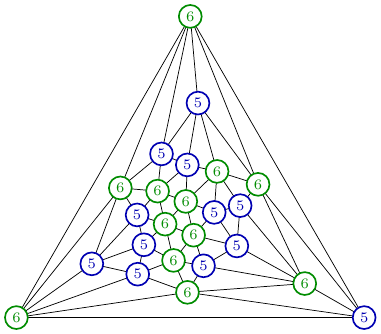}\\[14pt]
\includegraphics[width=0.62\textwidth]{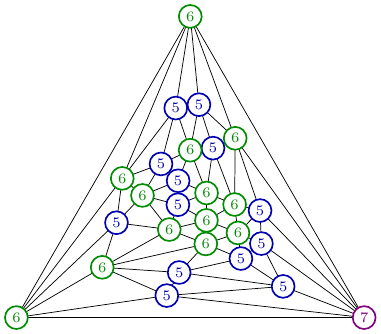}
\end{center}
\caption{Maximal planar graphs of minimum degree five attaining the bound of Theorem \ref{Tm_planar}(ii) for $k=0$,
each vertex labelled with its degree. Top: order $24$, $5^{12}6^{12}$, $\mathrm{sp}(G,0)=12=(n+12)/3$, the
dual of a fullerene with twelve hexagons. Bottom: order $27$, $5^{13}6^{13}7^{1}$,
$\mathrm{sp}(G,0)=13=(n+12)/3$. Both drawings are straight-line embeddings with the outer face a
triangle.}%
\label{Fig_MP0b}%
\end{figure}

\begin{table}[ht]
\small
\begin{center}
\begin{tabular}{|c|cccc|cccc|}
\hline
& \multicolumn{4}{c|}{$\mathrm{MP}(n,3,k)$} & \multicolumn{4}{c|}{$\mathrm{MP}(n,4,k)$}\\
$n$ & $k=0$ & $k=1$ & $k=2$ & $k=3$ & $k=0$ & $k=1$ & $k=2$ & $k=3$\\ \hline
$4$ & $4$ & $4$ & $4$ & $4$ & -- & -- & -- & --\\
$5$ & $3$ & $5^{\ast }$ & $5^{\ast }$ & $5$ & -- & -- & -- & --\\
$6$ & $2$ & $4^{\ast }$ & $6^{\ast }$ & $6^{\ast }$ & $6$ & $6$ & $6$ & $6$\\
$7$ & $2$ & $4$ & $5^{\ast }$ & $7^{\ast }$ & $5$ & $7^{\ast }$ & $7$ & $7$\\
$8$ & $2$ & $4$ & $4$ & $6^{\ast }$ & $4$ & $6$ & $8^{\ast }$ & $8^{\ast }$\\
$9$ & $3$ & $4$ & $5$ & $7^{\ast }$ & $4$ & $6$ & $7$ & $9^{\ast }$\\
$10$ & $3$ & $4$ & $5$ & $6$ & $4$ & $6$ & $8$ & $8$\\
$11$ & $3$ & $4$ & $6^{\ast }$ & $7^{\ast }$ & $4$ & $6$ & $9^{\ast }$ & $9$\\
$12$ & $3$ & $4$ & $6$ & $6$ & $4$ & $6$ & $8$ & $10^{\ast }$\\
$13$ & $3$ & $5$ & $6$ & $7$ & $5$ & $7$ & $9$ & $11^{\ast }$\\
$14$ & $3$ & $5$ & $6$ & $8^{\ast }$ & $5$ & $7$ & $10^{\ast }$ & $10$\\ \hline
\end{tabular}
\end{center}
\caption{The exact values of $\mathrm{MP}(n,\delta ,k)$ for $\delta =3,4$, $k\leq 3$ and $n\leq 14$,
by exhaustive enumeration; an asterisk marks the entries which exceed the maximised bound $s^{\ast }$
of Theorem \ref{Tm_sequence}, by exactly one except at $(\delta ,k,n)=(3,2,6)$ and $(3,3,7)$, where the excess
is two; the entries `--' mean that no maximal planar graph with $\delta =4$ exists at that order.}%
\label{Tab_MP34small}%
\end{table}

\begin{table}[ht]
\begin{center}
\begin{tabular}{|c|c|l|l|}
\hline
$\delta $ & $k$ & bound attained at $n$ & undecided orders $n\leq 30$\\ \hline
$3$ & $0$ & all $4\leq n\leq 30$ & none\\
$3$ & $1$ & $4$ and all $7\leq n\leq 30$ & none\\
$3$ & $2$ & $4,8,9,10,12,\ldots ,16,18,\ldots ,22,24,\ldots ,28,30$ & $17,23,29$\\
$3$ & $3$ & $4,5,10,12,13,15,\ldots ,18,20,\ldots ,23,25,\ldots ,30$ & $19,24$\\
$4$ & $0$ & all $6\leq n\leq 30$ & none\\
$4$ & $1$ & all $6\leq n\leq 30$, except $7$ & none\\
$4$ & $2$ & $6,7,9,10,12,13,15,\ldots ,30$ & none\\
$4$ & $3$ & $6,7,10,11,14,15,18,19,22,23,26,27$ & $16,17,20,21,24,25,28,29,30$\\ \hline
\end{tabular}
\end{center}
\caption{Orders $n\leq 30$ at which the maximised bound $s^{\ast }$ for $\mathrm{MP}(n,\delta ,k)$ is
attained by a certified maximal planar graph; at the orders $n\leq 14$ which are missing the value exceeds
the bound (Table \ref{Tab_MP34small} and the text), and at the undecided orders it is the bound or the
bound plus one.}%
\label{Tab_MP34}%
\end{table}

In particular
\[
\mathrm{MP}(n,3,0)=\left\lceil \frac{n+4}{7}\right\rceil \quad \text{and}\quad \mathrm{MP}(n,4,0)=%
\left\lceil \frac{n+6}{5}\right\rceil \qquad (24\leq n\leq 30),
\]
\[
\mathrm{MP}(n,3,1)=\left\lceil \frac{n+4}{4}\right\rceil \quad \text{and}\quad \mathrm{MP}(n,4,1)=%
\left\lceil \frac{n+6}{3}\right\rceil \qquad (12\leq n\leq 30),
\]
the extremal sequences $3^{s}5^{s}7^{s}9^{n-3s}$ being realised at $n=12,16,20,24,28$ and
$4^{s}6^{s}8^{n-2s}$ at every $n\equiv 0\ (\mathrm{mod}\ 3)$ in $[12,30]$. The misses are of the same
nature as in the outerplanar case of \cite{P1}: for $\delta =3$ and $k=2$ the equality
sequence $3^{s}6^{s}9^{b}$ (with $s=b+4$, $n=3b+8$) was realised for $b=0,2,4,6$, it is impossible
for $b=1$ by the enumeration, and it was not found for $b=3,5,7$.

\begin{remark}
\label{Rem_misses}It is worth spelling out the arithmetic of that family, since
it shows that nothing but graphicality is at stake. For $\delta =3$ and $k=2$
the bound is $(n+4)/3$ and the equality sequence is $3^{s}6^{s}9^{n-2s}$ with
$s=(n+4)/3$; writing $n=3b+8$ turns this into $s=b+4$ and $n-2s=b$, and the sum
of the sequence is $3(b+4)+6(b+4)+9b=18b+36=6n-12$, as it must be. Its window
number is $b+4$, because no window of three consecutive integers meets two of
the values $3$, $6$ and $9$. Thus the sequence passes every test that Section
\ref{Sec_prelim} can apply, and the only question left is whether a
maximal planar graph has it as its degree sequence.
\end{remark}



%

\section{Asymptotic sharpness of the bounds}
\label{Sec_asym}

In this section we determine for which pairs $(\delta ,k)$ the lower bounds of Theorems \ref{Tm_planar} and
\ref{Tm_planar34} have the right linear term. They do for $\delta =4$ and every $k$, for $\delta =5$ and
every $k$, and for $\delta =3$ and $0\leq k\leq 8$; for $\delta =3$ and $k\geq 9$ they do not
(Theorem \ref{Tm_deg3exact} and Proposition \ref{Prop_deg3}).
Write $c_{\delta ,k}$ for the coefficient of $n$ in the maximised bound, so that the bound reads
$\mathrm{MP}(n,\delta ,k)\geq c_{\delta ,k}\,n+O(1)$; for instance $c_{5,k}=k/(k+1)$ for $k\geq 1$, $c_{4,k}=(k-1)/(k+1)$ for
$k\geq 3$, $c_{3,k}=(k-2)/(k+1)$ for $k\geq 5$, and $c_{3,2}=1/3$, $c_{5,3}=3/4$. (For $\delta =3$ and $k\geq 9$ this $c_{3,k}$ is smaller than the true limit $(k-4)/(k-2)$ of Theorem \ref{Tm_deg3exact}, since there the bound is not tight.) Here $t$ is the value
maximising $f_{t}$ for large $n$ (Lemma \ref{Lemma_bestt}; when two values of $t$ are optimal we take the smaller $t$). By
Proposition \ref{Prop_equality}(ii), equality in $f_{t}$ forces the degrees $\delta ,\delta +h,\ldots ,\delta +th$ with
$n_{\delta +ih}=s$ for $i<t$ and $n_{\delta +th}=n-ts$; accordingly we call the densities $n_{\delta +ih}/n=c_{\delta ,k}$
for $i<t$ and $n_{\delta +th}/n=1-tc_{\delta ,k}$ the \emph{extremal densities} of $(\delta ,k)$ (a graph with
$\mathrm{sp}(G,k)=c_{\delta ,k}n+o(n)$ has all but $o(n)$ of its vertices at these degrees, with these
densities in the limit; the upper bounds of this paper give $O(\sqrt{n})$ in place of $o(n)$).

\subsection{From periodic triangulations of the torus to maximal planar graphs}

A \emph{periodic triangulation} of the plane is a locally finite triangulation $T$ of $\mathbb{R}^{2}$ by a simple
graph which is invariant under a lattice $\Lambda \subseteq \mathbb{R}^{2}$ of translations with compact fundamental
domain. The quotient $T/\Lambda $ is a triangulation of the torus (possibly with loops and multiple edges), and
conversely every triangulation of the torus lifts to a periodic triangulation of the plane, its universal cover; the
lift is simple if and only if no edge of the quotient is a loop with trivial translation label and no two edges join
the same pair of vertices with the same label (the labels are defined below). The \emph{density} of a degree $j$ in
$T$ is the proportion of vertices of degree $j$ in the fundamental domain.

It may help to say this less formally. A periodic triangulation is a wallpaper pattern made of
triangles: there are two independent directions in which the whole picture slides onto itself, and a
finite piece of it, together with the two sliding vectors $e_{1},e_{2}$ that generate $\Lambda $,
already determines everything. Such a finite piece is a \emph{fundamental domain}: a region meeting
every orbit of $\Lambda $ in exactly one vertex. All the information is therefore carried by the
finitely many orbits of vertices and by the way they are joined, and the one thing a plain list of
neighbours cannot record is \emph{which copy} of a neighbour is meant. That is the task of a
translation label. If the domain has $N$ vertices $0,\ldots ,N-1$, then writing $v_{(p,q)}$ in the
list of $u$ means that the copy of $u$ inside the domain is joined to the copy of $v$ translated by
$pe_{1}+qe_{2}$, the label $(0,0)$ being omitted; and the neighbours of $u$ are listed in the cyclic
order in which they occur around $u$ in the plane, so that the list records the triangulation and not
merely the graph. Since the vertices of the domain are in bijection with the orbits, the data are
exactly a triangulation of the torus $\mathbb{R}^{2}/\Lambda $ with its edges labelled by the
translations they cross, and this is the form in which the triangulations of Section
\ref{Sec_computer} are given; the five properties which guarantee that such a list really comes from
a periodic triangulation of the plane are listed there. The following example is small enough to be
checked in a couple of lines.

\begin{example}
\label{Ex_trilattice}Let $T$ be the triangular lattice, with vertex set $\mathbb{Z}^{2}$ and $(i,j)$
adjacent to $(i,j)+d$ for every $d$ in the set $D$ of Section \ref{Sec_hex}, drawn with $(i,j)$ at
$i\,e_{1}+j\,e_{2}$, where $e_{1}=(1,0)$ and $e_{2}=(\tfrac{1}{2},\tfrac{\sqrt{3}}{2})$. Its group of
translations contains $e_{1}$ and $e_{2}$, so all vertices form a single orbit and a fundamental
domain consists of one vertex $0$; the rotation system with translation labels is
\[
0:\quad 0_{(1,0)},\ 0_{(0,1)},\ 0_{(-1,1)},\ 0_{(-1,0)},\ 0_{(0,-1)},\ 0_{(1,-1)}.
\]
The quotient is not a simple graph at all: it is one vertex carrying three loops. The lift is
nevertheless simple, because no label is $(0,0)$ and no labelled neighbour is repeated. Tracing the
faces produces $2N=2$ triangles, the orbits of $\{v,v+(1,0),v+(0,1)\}$ and of
$\{v,v+(1,0),v+(1,-1)\}$, and the three labels around each of them sum to $(0,0)$; with $N=1$ vertex
and $E=3N=3$ edges this gives $V-E+F=1-3+2=0$, as it must for a torus. The only degree is $6$, its
density is $1$, and $\sum_{v}(6-\deg (v))=0$ over the domain.
\end{example}

Figure \ref{Fig_trilattice} draws the lattice with a fundamental domain and the
rotation system at one vertex.

\begin{figure}[htbp]
\begin{center}
\includegraphics[width=0.58\textwidth]{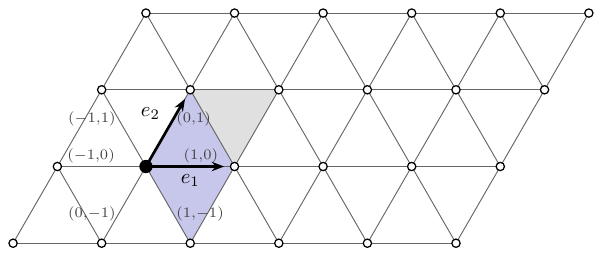}
\end{center}
\caption{Example \ref{Ex_trilattice}: the triangular lattice as a periodic triangulation with a single vertex
in a fundamental domain (shaded). Every vertex is a translate of the filled one by an integer combination of
$e_{1}$ and $e_{2}$, every face a translate of one of the two shaded faces, and the six labels at the filled
vertex are its whole rotation system.}%
\label{Fig_trilattice}%
\end{figure}

The identity at the end of the example is not an accident and will be used repeatedly: a
triangulation of the torus with $N$ vertices has $E=3N$ edges and $F=2N$ faces, so
$\sum_{v}\deg (v)=2E=6N$ and
\[
\sum_{v}\bigl( 6-\deg (v)\bigr) =0
\]
over a fundamental domain. Once it has been decided which degrees are to occur, this is one linear
condition on their densities; for two degrees it determines them outright.



\begin{lemma}
\label{Lemma_closure}Let $D$ be a triangulation of a closed disk with boundary cycle $b_{0}b_{1}\cdots b_{m-1}$,
$m\geq 16$, in which every $b_{i}$ has at least three neighbours. Then for every integer $p\geq m+1$ there is a
maximal planar graph $G$ such that

\begin{itemize}
\item[\textrm{(i)}] $D$ is a subgraph of $G$, every bounded face of $D$ is a face of $G$ and $|V(G)|=|V(D)|+p$;
\item[\textrm{(ii)}] every vertex of $V(G)\setminus V(D)$ has degree at least five in $G$;
\item[\textrm{(iii)}] every $b_{i}$ has degree $\deg_{D}(b_{i})+2\geq 5$ in $G$, and every interior vertex of $D$
has the same degree in $G$ as in $D$.
\end{itemize}
\end{lemma}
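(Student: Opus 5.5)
The plan is to build the cap explicitly: a first ring of exactly $m$ new vertices, a filling of the region inside that ring, and finally the apex. The first step is forced. Suppose the new vertices adjacent to the boundary form a cycle $c_{0}\cdots c_{r-1}$, and the annulus between it and $b_{0}\cdots b_{m-1}$ is triangulated. Then each $b_{i}$ is adjacent to a run of consecutive $c_{j}$. Consecutive runs share exactly one vertex, so the run lengths sum to $m+r$, which is the number of triangles in the annulus. Condition (iii) asks every run to have length exactly two, so $2m=m+r$ and $r=m$.

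I therefore take new vertices $c_{0},\ldots ,c_{m-1}$ with the triangles $b_{i}b_{i+1}c_{i}$ and $c_{i}b_{i+1}c_{i+1}$ (indices mod $m$). Then $b_{i+1}$ gains exactly $c_{i},c_{i+1}$, which gives (iii), since $\deg_{D}(b_{i})\geq 3$. Each $c_{i}$ already has degree four ($b_{i},b_{i+1},c_{i-1},c_{i+1}$), so it needs at least one neighbour on the inner side. The bounded faces of $D$ are untouched, and no multiple edge arises, because every new edge has a new endpoint. This proves (i) except for the vertex count. What remains is a purely combinatorial filling problem: triangulate the inside of an $m$-cycle $C$, $m\geq 16$, with exactly $q=p-m\geq 1$ new vertices, so that every vertex of $C$ gets at least one interior neighbour and every interior vertex has degree at least five.

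For the filling I use nested rings and an apex, as the introduction announces. Between an outer cycle of length $R$ and an inner cycle of length $r$ with $5\leq r\leq R$, the annulus has $R+r$ triangles. I choose it so that every outer vertex has a run of at least one inner neighbour and every inner vertex has a run of at least two outer neighbours; this is possible because the inner runs sum to $R+r\geq 2r$ and the outer runs sum to $R+r\geq R$. Concretely, distribute the $R$ outer vertices as evenly as possible among the $r$ gaps. An inner-ring vertex then has degree at least $2+2+1=5$ once the next layer gives it a neighbour, and an outer vertex gains at least one. The last ring, of size $r_{\ell}\geq 5$, is closed by one apex joined to all of it, and the apex has degree $r_{\ell}\geq 5$. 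So for $q-1=r_{1}+\cdots +r_{\ell}$ with $m\geq r_{1}\geq \cdots \geq r_{\ell}\geq 5$ the filling works. Every integer $q-1\geq 5$ has such a decomposition: take rings of size $5$ or $6$, or any mixture, as long as each size is at most $m$.

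The obstacle is the small cases $q\in \{1,\ldots ,5\}$ (and, in the ring scheme, $q=6$ with a single ring of size five is fine). For $q=1$ the apex alone, joined to all of $C$, has degree $m\geq 16$, and each $c_{i}$ reaches degree five. For $2\leq q\leq 5$ I would instead put a path $x_{1}\cdots x_{q}$ inside $C$ and fan-triangulate the $m$-gon around it: split $C$ into consecutive arcs assigned to the $x_{j}$, with each $x_{j}$ receiving an arc of at least four vertices, which is possible since $m\geq 16\geq 4q$. Every $c_{i}$ then gets one or two path neighbours, and each $x_{j}$ has degree at least five. Checking that this fan together with the path edges really triangulates the disk, and that no edge is doubled at the transitions between arcs, is the step I expect to need the most care; it is also where $m\geq 16$ is used. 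Finally, a triangulated sphere on at least four vertices with no multiple edges is maximal planar. Counting vertices, $m+q=p$, which gives $|V(G)|=|V(D)|+p$, and (ii) holds because every new vertex has degree at least five.
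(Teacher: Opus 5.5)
Your proposal is correct, and its core is the paper's construction: the first annulus with $c_{i}$ joined to $b_{i},b_{i+1}$ (the paper's choice $q_{i}=i$), then nested rings joined by annuli in which every inner vertex sees at least two outer ones, then an apex on the innermost ring. You differ only in how the smallest orders are reached. The paper keeps every ring of length at least $6$ and writes $p-1-m$ as a sum of parts in $[6,m]$ for $p\geq m+7$. It gets $p=m+2,\ldots ,m+6$ by splitting the apex of the one-ring closure five times with $a=4$; this is a local move that preserves maximal planarity, and its effect on the degrees can be read off at once. You allow rings of length $5$, which leaves only $q=p-m\in \{2,\ldots ,5\}$ exceptional, and you handle these by triangulating the $m$-gon around an inner path. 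That is a valid alternative; it is more explicit, but it needs its own face check instead of appealing to a standard operation.

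Two of your justifications need repair, though the plan survives both.

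First, parts equal to $5$ or $6$ alone do not represent every integer at least $5$: $7,8,9,13,14,19$ are not sums of fives and sixes. Use one ring of length $q-1$ when $q-1\leq m$, and parts in $[5,m]$ otherwise.

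Second, $16\geq 4q$ is false for $q=5$, and pairwise disjoint arcs of four vertices would need $m\geq 20$. But the arcs share endpoints, and the inner path vertices need much less. Choose $0=a_{0}<a_{1}<\cdots <a_{q}=m$ and join $x_{j}$ to $c_{a_{j-1}},\ldots ,c_{a_{j}}$ (indices modulo $m$). Then join $c_{0}$ also to $x_{2},\ldots ,x_{q-1}$.
\begin{itemize}
\item The faces are $x_{j}c_{i}c_{i+1}$ for $a_{j-1}\leq i<a_{j}$, together with $x_{j}x_{j+1}c_{a_{j}}$ and $c_{0}x_{j}x_{j+1}$ for $1\leq j<q$. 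That is $m+2q-2$ faces, every inner edge lies in exactly two of them, and no edge is repeated.
\item Every $c_{i}$ gains a path neighbour.
\item An inner $x_{j}$ has degree $a_{j}-a_{j-1}+4\geq 5$ automatically.
\item The two ends have degrees $a_{1}+2$ and $m-a_{q-1}+2$.
\end{itemize}
So $a_{j}=j+2$ for $1\leq j<q$ works as soon as $m\geq q+4$, with plenty of room under $m\geq 16$.
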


\begin{proof}
We build the \emph{cap}, a triangulation of the complementary disk, out of nested rings and one final
\emph{apex} vertex; without the apex the innermost ring would bound a face that is not a triangle. Let
$m=M_{1}\geq M_{2}\geq \cdots \geq M_{j}\geq 6$ be integers, let $R_{i}$ be a cycle $y^{i}_{0}\cdots y^{i}_{M_{i}-1}$ on
new vertices, and let $w$ be one further new vertex. Two consecutive cycles $x_{0}\cdots x_{M-1}$ and
$y_{0}\cdots y_{M'-1}$ with $M\geq M'$ are joined by a triangulated annulus as follows: choose
$0=q_{0}<q_{1}<\cdots <q_{M'-1}<M$ and join $y_{i}$ to $x_{q_{i}},x_{q_{i}+1},\ldots ,x_{q_{i+1}}$ (indices modulo $M$,
$q_{M'}=q_{0}+M$); the faces of the annulus are the triangles $y_{i}x_{t}x_{t+1}$ for $q_{i}\leq t<q_{i+1}$ and
$y_{i}y_{i+1}x_{q_{i+1}}$. Take $x=b$, $y=y^{1}$ with $M_{1}=m$ and $q_{i}=i$ for the first annulus, $x=y^{i}$,
$y=y^{i+1}$ for the following ones, and join $w$ to all of $R_{j}$. The result $G$ is a maximal planar graph:
the disk $D$, the $j$ annuli and the fan of $w$ cover it, and all faces are triangles. In the first annulus every
$b_{i}$ is joined to exactly two vertices of $R_{1}$, which is (iii); the interior vertices of $D$ are untouched. A
vertex $y^{i}_{t}$ is joined to its two neighbours on $R_{i}$, to at least two vertices of the cycle outside it and to
at least one vertex inside it (to $w$ if $i=j$), so its degree is at least five, and $\deg (w)=M_{j}\geq 6$: this is
(ii). Finally $|V(G)\setminus V(D)|=M_{1}+\cdots +M_{j}+1$.

It remains to realise every $p\geq m+1$. For $j=1$ we get $p=m+1$. For $p\geq m+7$ write $p-1-m=M_{2}+\cdots +M_{j}$
with $6\leq M_{i}\leq m$ non-increasing, which is possible for every $p-1-m\geq 6$: if $p-1-m\leq m$ take one summand;
otherwise write $p-1-m=am+r$ with $0\leq r<m$ and take $a$ summands $m$ when $r=0$, $a$ summands $m$ and one summand
$r$ when $r\geq 6$, and $a-1$ summands $m$ followed by $\lceil (m+r)/2\rceil ,\lfloor (m+r)/2\rfloor $ when
$1\leq r\leq 5$ (both lie in $[6,m]$ because $m\geq 16$). For the five remaining values $p=m+2,\ldots ,m+6$ we start
from $j=1$ and \emph{split} the vertex $w$ repeatedly. Splitting a vertex $v$ of degree $d\geq 6$ whose neighbours in
cyclic order are $z_{0},\ldots ,z_{d-1}$ means: delete $v$, add two adjacent vertices $v_{1},v_{2}$, join $v_{1}$ to
$z_{0},\ldots ,z_{a-1}$ and $v_{2}$ to $z_{a-1},z_{a},\ldots ,z_{0}$, where $4\leq a\leq d-2$. This keeps a
maximal planar graph, adds one vertex, gives $v_{1},v_{2}$ the degrees $a+1\geq 5$ and $d-a+3\geq 5$, raises the
degrees of $z_{0}$ and $z_{a-1}$ by one and changes nothing else; in particular (iii) is preserved. Splitting off
$a=4$ five times in succession is possible because $\deg (w)=m\geq 16$: the successive apex degrees are
$m,m-1,\ldots ,m-4$, all at least six, and the five vertices split off have degree five.
\end{proof}

\begin{figure}[htbp]
\begin{center}
\includegraphics[width=0.46\textwidth]{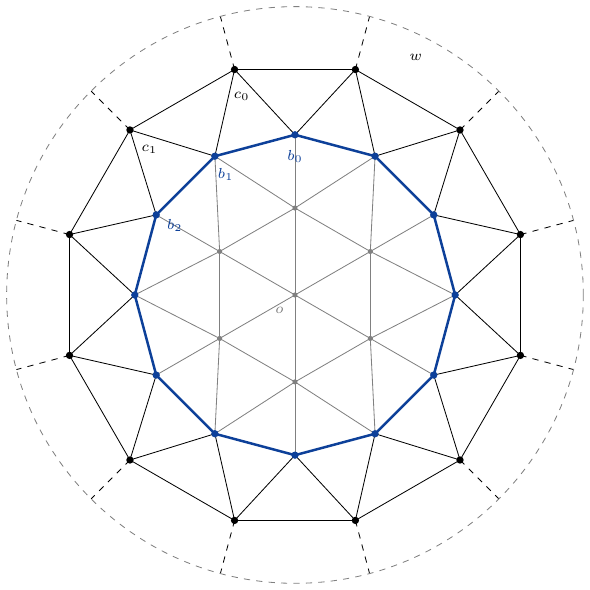}
\end{center}
\caption{The closure of Lemma \ref{Lemma_closure} with one ring: the triangulated disk $D$ with boundary cycle
$b_{0}b_{1}\cdots b_{m-1}$ (in bold), the ring $c_{0}c_{1}\cdots c_{m-1}$ with $c_{i}$ adjacent to $b_{i},b_{i+1}$ and
to $c_{i\pm 1}$, and the apex $w$, drawn as the point at infinity, adjacent to all $c_{i}$. Every $b_{i}$ gains
exactly two neighbours and every $c_{i}$ has degree five.}%
\label{Fig_closure}%
\end{figure}

\begin{lemma}
\label{Lemma_periodic}Let $\delta \in \{3,4,5\}$ and $k\geq 0$, and let $T$ be a periodic triangulation of the
plane with minimum degree $\delta $ in which the vertices of degree $j$ have density $d_{j}$. Write
\[
w_{k}(T)=\max_{p}\ \sum_{j=p}^{p+k}d_{j}
\]
for the largest density of a window of $k+1$ consecutive degrees; the maximum exists because a periodic
triangulation has only finitely many degrees. Then for every sufficiently large $n$ there is a
maximal planar graph $G$ of order $n$ with $\delta (G)=\delta $ and
\[
\mathrm{sp}(G,k)\ \leq \ w_{k}(T)\,n+O(\sqrt{n}).
\]
If the degrees of $T$ have the extremal densities of $(\delta ,k)$, then $w_{k}(T)=c_{\delta ,k}$ and
consequently
\[
\mathrm{MP}(n,\delta ,k)=c_{\delta ,k}\,n+O(\sqrt{n})
\]
for every sufficiently large $n$; in particular
$\lim_{n\to \infty }\mathrm{MP}(n,\delta ,k)/n=c_{\delta ,k}$.
\end{lemma}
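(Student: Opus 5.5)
The plan is to cut a large disk $D$ out of $T$, close it with Lemma \ref{Lemma_closure}, and count degrees. Fix a fundamental domain $F$ for $\Lambda$ with $N$ vertices and let $\rho$ be a bound on the diameter of $F$ together with the lengths of the edges of $T$ (finitely many orbits, so finite). For a radius $r$, take the union of all faces of $T$ meeting the Euclidean disk of radius $r$. Then pass to the component containing the origin and fill in holes, so that we obtain a triangulated closed disk $D_{r}$ whose vertices all lie within distance $r+O(1)$ of the origin and which contains every vertex within distance $r-O(1)$. The number of vertices is $|V(D_{r})|=\pi r^{2}N/\mathrm{area}(F)+O(r)$. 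The boundary cycle has length $m=O(r)$, and $m\geq 16$ for large $r$.

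Two technical points must be arranged. First, the boundary must be a simple cycle with every $b_{i}$ having at least three neighbours in $D$. I expect to achieve this by taking $D$ to be a region bounded by a simple closed walk in $T$ that approximates the circle. If some boundary vertex has only two neighbours in $D$, it lies on a single face; such ears can be removed one at a time, which changes only $O(r)$ vertices. This is the step I expect to be fiddly, and I would handle it by choosing $D$ as the set of faces lying inside a large lattice parallelogram or hexagon made of fundamental domains, where the boundary is explicit. Second, the order must come out exactly $n$. Since Lemma \ref{Lemma_closure} allows any $p\geq m+1$, I choose $r$ maximal with $|V(D_{r})|+m+1\leq n$. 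The vertex counts of consecutive admissible disks differ by $O(r)=O(\sqrt{n})$, because we may grow the disk one face-layer or one fundamental domain at a time. Hence $p=n-|V(D)|$ satisfies $m+1\leq p\leq m+1+O(\sqrt{n})=O(\sqrt{n})$.

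Now count degrees in the resulting $G$. By (iii) of Lemma \ref{Lemma_closure}, interior vertices of $D$ keep their $T$-degrees. The interior vertices whose $T$-neighbourhood is entirely inside $D$ are all but $O(r)$ vertices of $D$, and their degrees are the degrees in $T$. A disk is a union of $|V(D)|/N+O(r)$ full translates of $F$ plus $O(r)$ partial ones, so the number of interior vertices of degree $j$ is $d_{j}|V(D)|+O(\sqrt{n})$. The remaining vertices, namely the boundary vertices, the $p$ cap vertices and any partial-domain vertices, number $O(\sqrt{n})$. Therefore every window satisfies $\sum_{j=p'}^{p'+k}n_{j}\leq w_{k}(T)n+O(\sqrt{n})$, which gives the bound on $\mathrm{sp}(G,k)$.

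For the minimum degree, the cap vertices and boundary vertices have degree at least $5\geq\delta$, and interior vertices have degree at least $\delta$. The minimum $\delta$ is attained because $d_{\delta}>0$ and $D$ contains many vertices of degree $\delta$. So $\delta(G)=\delta$, with $G$ maximal planar of order $n$.

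For the second part, suppose the densities are the extremal ones: $c_{\delta,k}$ at the levels $\delta+ih$ for $i<t$, and $1-tc_{\delta,k}\leq c_{\delta,k}$ at $\delta+th$. The last inequality holds by Proposition \ref{Prop_equality}, since $t-1\leq 2x\leq t$ in the limit. The levels are spaced $h=k+1$ apart, so no window of $k+1$ consecutive integers contains two of them. Hence $w_{k}(T)=c_{\delta,k}$. Combining the upper bound with the lower bound $c_{\delta,k}n+O(1)$ of Theorems \ref{Tm_planar} and \ref{Tm_planar34} gives the asymptotic formula for $\mathrm{MP}(n,\delta,k)$ and its limit.
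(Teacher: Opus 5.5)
Your outline follows the paper's proof. You cut a large disk out of $T$, strip its boundary so that Lemma~\ref{Lemma_closure} applies, and step the radius by a constant so that the order can be padded to exactly $n$ with $p=m+1+O(\sqrt{n})$. You then count degrees, and for the second part you use the spacing $h=k+1$ of the extremal levels together with $1-tc_{\delta,k}\leq c_{\delta,k}$. The one real difference is how you obtain the disk, and that is also the one step you assert rather than prove.

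The paper takes the faces \emph{contained} in $B(\rho)$. That union can be pinched at a vertex near the circle, so the paper must remove pinch vertices as well as ears, and check that neither operation reaches $B(\rho-\ell)$. You take the faces \emph{meeting} $B(r)$, call their union $R_0$, fill its holes, and only discuss ears.

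In your variant pinch vertices cannot occur, but you must say why:
\begin{itemize}
\item[(1)] Every face meets the convex set $B(r)\subseteq R_0$. Hence $R_0\setminus\{v\}$ is connected for every point $v$: if a face meets $B(r)$ only at a vertex $v$, the whole star of $v$ lies in $R_0$.
\item[(2)] Filling bounded holes preserves this connectivity.
\item[(3)] Suppose $v$ were a pinch vertex of a region $R$ with $R\setminus\{v\}$ connected. Then there is a simple closed curve in $R$ through $v$ with a gap of $R$ at $v$ on each side. One gap would then lie in a bounded component of the complement, contradicting hole-filling.
\end{itemize}

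You also need three further points:
\begin{itemize}
\item[(a)] An ear at $v$ cannot contain a vertex $x$ whose whole star is in the region, since both faces at the edge $xv$ would contain $v$. Stripping ears therefore never touches the vertices of $B(r)$ and changes only $O(r)$ vertices. Without this, ``changes only $O(r)$ vertices'' is unsupported.
\item[(b)] The bound $m\geq 16$ needs the perimeter argument: $b_0\cdots b_{m-1}$ encircles $B(r-\ell)$ with edges of length at most $\ell$. The estimate $m=O(r)$ alone does not give it.
\item[(c)] ``Choose $r$ maximal'' should be done over a discrete sequence of radii, because $|V(D_r)|+m+1$ is not monotone in $r$. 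Take the largest index with value at most $n$, as the paper does.
\end{itemize}

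Finally, drop the fallback of a parallelogram of fundamental domains ``where the boundary is explicit''. For a general periodic triangulation the faces inside such a block need not form a disk and have no explicit boundary. Remark~\ref{Rem_patch} explains why the paper avoids blocks.
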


\begin{proof}
Here $T$ is drawn in the plane with straight edges, as a periodic triangulation always may be, and the
\emph{length} of an edge is the Euclidean length of the segment drawing it; since $T$ has only finitely many
edges up to translation, the lengths take only finitely many values and the largest, $\ell $, exists. For the
triangular lattice spanned by $(1,0)$ and $(\frac{1}{2},\frac{\sqrt{3}}{2})$ every edge joins two points at
distance one, so $\ell =1$; for the square grid with the diagonal of each cell added, the edges have lengths
$1$ and $\sqrt{2}$, so $\ell =\sqrt{2}$.

Let $\Lambda $ be the lattice of $T$, let $\ell $ be the largest edge length of $T$, and fix a point $o$. For
$\rho >2\ell $ let $D_{\rho }$ be the union of the faces of $T$ contained in the closed disk $B(\rho )$ of radius
$\rho $ around $o$, and let $D_{\rho }^{\prime }$ be the component of $D_{\rho }$ (in the sense of adjacency of faces
across edges) containing $o$. Every face meeting $B(\rho -\ell )$ has all its vertices in $B(\rho )$, so it lies in
$D_{\rho }$ and, being connected to $o$ through faces meeting $B(\rho -\ell )$, in $D_{\rho }^{\prime }$. Hence every
vertex $x$ with $|x-o|<\rho -\ell $ lies in $D_{\rho }^{\prime }$ together with its whole link, i.e.\ is an interior
vertex of $D_{\rho }^{\prime }$. Since every face not in $D_{\rho }$ meets the exterior of $B(\rho )$, and every other component of $D_{\rho }$ is
adjacent across an edge only to faces not in $D_{\rho }$ (a face of $D_{\rho }$ adjacent to it would belong to it), the
complement of $D_{\rho }^{\prime }$ is connected; so $D_{\rho }^{\prime }$ is a union of disks glued at
\emph{pinch vertices}, a pinch vertex being a vertex $v$ at which the faces of the region containing $v$ form more than
one arc of the rotation at $v$, and its boundary is a single closed walk. We now clean the boundary: if a
boundary vertex lies in a single face (an ear), remove that face; at a pinch vertex, remove all faces of the arcs other
than the one containing the faces that meet $B(\rho -\ell )$. Two properties are preserved by every step: (a) every vertex $x$ with
$|x-o|<\rho -\ell $ lies in the region together with its whole star, and (b) the faces of the region that meet
$B(\rho -\ell )$ form an edge-connected set containing $o$ (any two of them are joined by a path inside
$B(\rho -\ell )$ avoiding vertices). Indeed, an ear at a vertex $v$ cannot contain such an $x$: the two faces at $x$
containing the edge $xv$ both lie in the region by (a), so $v$ would lie in two faces. An ear is attached to the rest
along a single edge, so removing it preserves (b); and at a pinch vertex the set in (b) lies on one side, which is the
side that is kept, so the removed side contains no face meeting $B(\rho -\ell )$ and hence, by (a), no vertex of
$B(\rho -\ell )$. Every removed face therefore has all its vertices in the annulus
$\rho -\ell \leq |x-o|\leq \rho $, and every vertex of $B(\rho -\ell )$ stays interior. Each step removes at least one face, so the process stops after finitely many steps, with a region
$\Delta _{\rho }$ having neither an ear nor a pinch vertex. At every boundary vertex of $\Delta _{\rho }$ the faces
therefore form a single arc of at least two faces, so the boundary walk visits every vertex exactly once: it is a simple
cycle $b_{0}b_{1}\cdots b_{m-1}$. A finite union of closed faces whose complement is connected and whose boundary is a
simple cycle is a closed disk, so $\Delta _{\rho }$ is a triangulated disk, and every $b_{i}$ lies in the annulus and
in at least two faces of $\Delta _{\rho }$, hence has at least three neighbours in $\Delta _{\rho }$. The annulus contains $O(\rho )$ vertices because $T$ has
bounded vertex density; so $m=O(\rho )$, and
$B(\rho -\ell )\cap V(T)\subseteq V(\Delta _{\rho })\subseteq B(\rho )\cap V(T)$ gives
$|V(\Delta _{\rho })|+m+1=\Theta (\rho ^{2})$.

The closed polygonal curve $b_{0}b_{1}\cdots b_{m-1}$ encircles $B(\rho -\ell )$, so its length is at least
$2\pi (\rho -\ell )$ while each of its $m$ edges has length at most $\ell $; hence $m\geq 2\pi (\rho -\ell )/\ell $
and, with $m=O(\rho )$ from above, $m=m_{\rho }=\Theta (\rho )$. Put
$n_{0}(\rho )=|V(\Delta _{\rho })|+m_{\rho }+1$; then
$n_{0}(\rho )=\Theta (\rho ^{2})$, and since $\Delta _{\rho }$ and $\Delta _{\rho +\ell }$ differ only inside the annulus
$\rho -\ell \leq |x-o|\leq \rho +\ell $, which contains $O(\rho )$ vertices, we have
$n_{0}(\rho +\ell )-n_{0}(\rho )=O(\rho )$. Fix $\rho _{0}>2\ell $ and let $\rho _{i}=\rho _{0}+i\ell $. Given
$n$ large, let $i$ be the largest index with $n_{0}(\rho _{i})\leq n$, which exists because
$n_{0}(\rho _{i})\to \infty $; then $\rho _{i}=\Theta (\sqrt{n})$ and
\[
r:=n-n_{0}(\rho _{i})\ <\ n_{0}(\rho _{i+1})-n_{0}(\rho _{i})\ =\ O(\rho _{i})\ =\ O(\sqrt{n}).
\]
Every boundary vertex of $D=\Delta _{\rho _{i}}$ has at least three neighbours in $D$, and $m\geq 16$ for large
$\rho _{i}$ because $m\geq 2\pi (\rho _{i}-\ell )/\ell $, so Lemma \ref{Lemma_closure} applies
with $p=m+1+r$ and produces a maximal planar graph $G$ of order exactly $|V(D)|+p=n$ in which the interior
vertices of $D$ keep their degrees in $T$, each $b_{i}$ has gained exactly two neighbours, and the $p$ vertices of the
cap have degree at least five. Hence $\delta (G)=\delta $, and only the $m+p=O(\sqrt{n})$ vertices of the boundary
and of the cap have degrees differing from those in $T$. Since $D$ contains all vertices of $B(\rho -\ell )$
and is contained in $B(\rho )$, it differs from a union of translates of the fundamental domain of $\Lambda $ by
$O(\rho )$ vertices, so for every degree $j$ the number of vertices of degree $j$ in $G$ is $d_{j}n+O(\sqrt{n})$,
where $d_{j}$ is the density of $j$ in $T$. A window of $k+1$ consecutive degrees therefore contains at most
$w_{k}(T)\,n+O(\sqrt{n})$ vertices of $G$, the error being the sum of the $k+1$ individual ones and $k$ being
fixed; as $n$ was an arbitrary large order, this is the first assertion. If the
degrees of $T$ have the extremal densities of $(\delta ,k)$, they are spaced by $h=k+1$, so such a window contains at
most one of them and $w_{k}(T)=\max (c_{\delta ,k},1-tc_{\delta ,k})=c_{\delta ,k}$, using
$1-tc_{\delta ,k}\leq c_{\delta ,k}$ (Proposition \ref{Prop_equality}(ii)); the lower bound of Theorems
\ref{Tm_planar} and \ref{Tm_planar34} then gives the equality.
\end{proof}

\begin{remark}
\label{Rem_patch}Instead of a Euclidean disk one may cut out an $a\times b$ block of fundamental domains and close the
resulting patch in the same way; this is what one would do to compute an example by hand, and we have carried it out
for all the triangulations of this section (with the labels normalised as in Section \ref{Sec_computer}, so that a block
of domains is a connected region). We prefer the disk because its boundary lies in a single annulus of width
$\ell $, which gives at once both $m=\Theta (\rho )$ and the estimate $d_{j}n+O(\sqrt{n})$ for the degree counts,
whereas a rectangular block needs a separate discussion of its four corners, where the boundary is not contained in
one annulus and the cleaning of the boundary can remove more than $O(\sqrt{n})$ vertices unless the aspect ratio is
controlled. For the same reason the disk makes the padding of Lemma \ref{Lemma_closure} unproblematic: the deficit is
$O(\sqrt{n})$ while the cap can be given any number of vertices.
\end{remark}

The lemma reduces the asymptotic problem to the existence of a periodic triangulation with the extremal densities
and minimum degree $\delta $ (the second condition is automatic when $\delta $ is the smallest extremal degree, as it
always is). For $k=0$ such triangulations were given by Caro and West \cite[Theorem 2.5]{CaroWest} for
$\delta =3,4,5$, and Section \ref{Sec_hex} gives an explicit one for $\delta =5$. We now give them for $k\geq 1$.

\subsection{What the closure does, and where the apex is}
\label{Sec_apex}

Lemma \ref{Lemma_closure} is the step at which an infinite pattern becomes a finite graph, and it is worth
looking at it slowly, because two things about it are easy to misread: what exactly is being closed, and where
the last vertex goes.

The input is a \emph{triangulated disk} $D$: a finite plane graph all of whose bounded faces are triangles,
whose unbounded face is bounded by a cycle $b_{0}b_{1}\cdots b_{m-1}$, the \emph{boundary cycle}. Such a $D$ is
not yet a maximal planar graph, because its unbounded face is an $m$-gon and not a triangle. To make it
one we must fill that $m$-gon with triangles, and we must do so without creating a vertex of small degree ---
otherwise the minimum degree, which is the whole point of $\mathrm{MP}(n,\delta ,k)$, would be destroyed at the
last moment. Filling the $m$-gon with a single vertex joined to all of $b_{0},\ldots ,b_{m-1}$ would be the
obvious thing to do, and it is what one does when the order is not prescribed; but it adds only one vertex, and
we need to be able to add any prescribed number $p$ of them.

The \emph{cap} of Lemma \ref{Lemma_closure}, drawn in Figure \ref{Fig_closure}, does the filling in
two stages. First come the \emph{rings}: cycles
$R_{1},R_{2},\ldots ,R_{j}$ of new vertices, drawn one inside the next, each joined to the one before it by a
triangulated annulus, with $R_{1}$ joined to the boundary cycle of $D$. Then comes the last vertex, the
\emph{apex} $w$, joined to every vertex of the innermost ring $R_{j}$. The rings carry the freedom: their
lengths $M_{1}\geq \cdots \geq M_{j}\geq 6$ can be chosen so that $M_{1}+\cdots +M_{j}+1$ is any prescribed
$p\geq m+1$, which is how the lemma reaches every order. The apex carries nothing but the closure itself: it is
the vertex that turns the last $m$-gon into a fan of triangles.

Two points about the apex answer the questions one naturally asks.

\emph{The apex is not on the boundary cycle.} It is a new vertex, adjacent to the innermost ring and to nothing
else. In the drawing of $D$ it lies in the unbounded face --- outside the boundary cycle --- and it is the only
vertex of $G$ that is at distance $j+1$ from $D$.

\emph{On the sphere, however, ``outside'' means nothing.} A maximal planar graph has no distinguished
face; a plane drawing has one only because we chose which face to send to infinity. Figure \ref{Fig_apex} shows
the same triangulation twice. On the left the disk $D$ is drawn in the usual way, the ring around it, and the
apex is the point at infinity: the short radial edges at the ring vertices all meet there. On the right the
same graph is drawn with a face of $D$ as the unbounded one, and now the apex sits in the middle of the picture,
an ordinary vertex of degree $M_{j}$ like any other. Nothing has changed but the choice of outer face.

\begin{figure}[htbp]
\begin{center}
\includegraphics[width=0.47\textwidth]{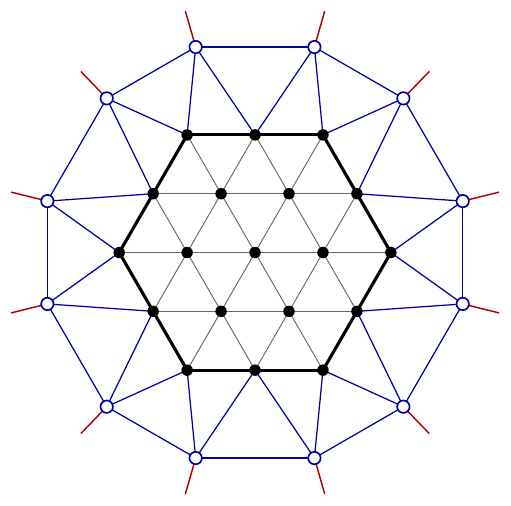}\hfill
\includegraphics[width=0.45\textwidth]{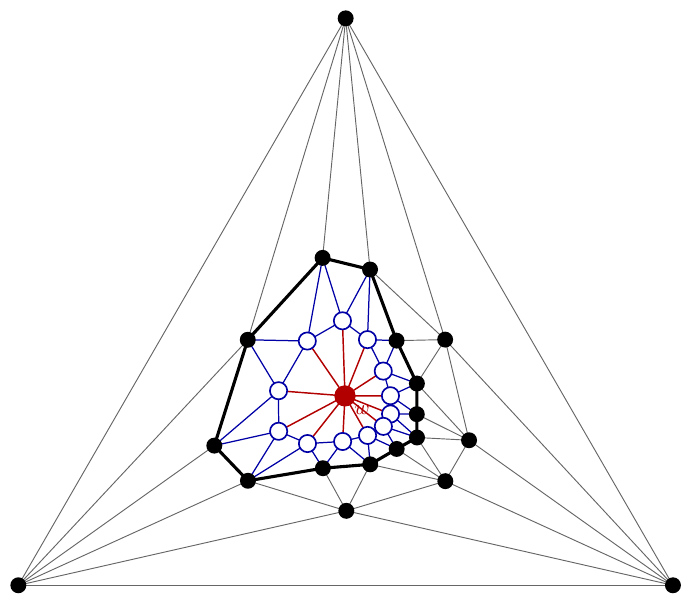}
\end{center}
\caption{The closure of Lemma \ref{Lemma_closure}, with one ring, drawn twice. Left: the triangulated disk $D$
(black vertices, its boundary cycle in bold), the ring (white vertices), and the apex $w$ as the point at
infinity --- the short radial edges at the ring all run to it. Right: the same maximal planar graph drawn
with a triangle of $D$ as the unbounded face; now $w$ (red) is in the middle, the ring surrounds it, and $D$ is
spread around the outside. The apex lies on no boundary cycle: whether it looks external depends only on which
face is drawn as the outer one.}%
\label{Fig_apex}%
\end{figure}

\begin{example}
\label{Ex_apex}Here is the whole passage from an infinite pattern to a finite maximal planar graph, at a
size one can check by hand. Take the triangular lattice, that is the $6$-regular periodic triangulation of the
plane, and let $D$ be the ball of radius two around a vertex: the vertex, its six neighbours and the twelve
vertices at distance two. It is a triangulated disk with
\[
19\ \text{vertices},\qquad 24\ \text{triangles},\qquad \text{a boundary cycle of length }m=12,
\]
its seven interior vertices having degree six and each of its twelve boundary vertices lying in at least two
triangles, as the closure requires.

Close it with one ring $c_{0}c_{1}\cdots c_{11}$, joining $c_{i}$ to $b_{i}$, $b_{i+1}$ and to $c_{i\pm 1}$, and
with the apex $w$ joined to all twelve $c_{i}$. The result $G$ has
\[
n=19+12+1=32\ \text{vertices},\qquad 90=3n-6\ \text{edges},\qquad 60=2n-4\ \text{triangular faces},
\]
so $|V|-|E|+|F|=32-90+60=2$ and $G$ is a maximal planar graph. Its degrees are
\[
5^{(18)}\ 6^{(13)}\ 12^{(1)} ,
\]
the seven interior vertices of $D$ keeping the degree six they had in the lattice, every boundary vertex gaining
exactly the two ring neighbours promised by Lemma \ref{Lemma_closure}(iii), every ring vertex having degree
five, and the apex having degree $m=12$. In particular $\delta (G)=5$: the minimum degree of the pattern
survives the closure, which is what the lemma is for. Figure \ref{Fig_apex} is this example.

Two hypotheses of the lemma are not needed for an illustration as small as this one. The bound $m\geq 16$ and
the freedom in the number $p$ of added vertices are there to realise \emph{every} order $p\geq m+1$; here we
close with a single ring and $p=m+1=13$, for which one ring and an apex suffice whatever $m$ is.
\end{example}

Two remarks on the figure are worth making, because they are the traps. First, the apex cannot be added
to the left panel at a finite point: if the disk and the ring are drawn as they are there, then wherever one
puts the apex and joins it to the twelve ring vertices by segments, the picture acquires crossings. We tested
$19876$ positions on a grid covering a square of side $28$ about the centre, and the fewest crossings any of
them produced was $38$. This says nothing against F\'{a}ry's theorem, which guarantees that the graph has
\emph{some} straight-line plane drawing: the right panel is one, and there the apex does sit at a finite
point. What fails is only the attempt to keep the left panel and add the apex to it, and that is why the left
panel sends the apex to infinity instead. Secondly, the whole construction is what one does to a \emph{torus}
triangulation as well: a periodic triangulation of the plane is the lift of a triangulation of the torus, one
cuts a large disk out of that lift, and the cap closes the disk into a sphere. The lattice of this example is
the lift of the $6$-regular triangulation of the torus with any $a\times b$ fundamental domain; Lemma
\ref{Lemma_periodic} performs the same passage for an arbitrary periodic triangulation and for every
sufficiently large order.

\begin{remark}
\label{Rem_fan}The apex is not the only way of closing the last $m$-gon, and the other way is in this paper
too. Instead of adding a new vertex joined to all of $b_{0},\ldots ,b_{m-1}$, one may \emph{fan} the
$m$-gon from one of its own vertices: the $m-3$ diagonals $b_{0}b_{2},b_{0}b_{3},\ldots ,b_{0}b_{m-2}$
triangulate it with no new vertex at all. The two are exchanged at a fixed price. An apex adds one vertex,
of degree $m$, and raises every $\deg (b_{i})$ by one; a fan adds no vertex, raises $\deg (b_{0})$ by
$m-3$, raises $\deg (b_{i})$ by one for $2\leq i\leq m-2$, and leaves $b_{1}$ and $b_{m-1}$ alone. In both
counts the two agree with $e=3n-6$: the fan adds $m-3$ edges and no vertex, the apex adds $m$ edges and
one vertex, and the difference $m-(m-3)=3$ is exactly what one extra vertex costs in a maximal planar graph.

The construction of Theorem \ref{Tm_hex} takes the second course: its hexagon $w_{0}\cdots w_{5}$ is closed
by the three diagonals $w_{0}w_{2}$, $w_{0}w_{3}$, $w_{0}w_{4}$ and not by a seventh vertex. That is
visible in the degrees. Before the diagonals every $w_{r}$ has degree $L+3$, namely the $L+1$ vertices of
its side and its two neighbours $w_{r\pm 1}$; the fan then gives $\deg (w_{0})=L+3+(m-3)=L+6$,
$\deg (w_{2})=\deg (w_{3})=\deg (w_{4})=L+4$, and leaves $\deg (w_{1})=\deg (w_{5})=L+3$. In Figure
\ref{Fig_G4}, where $L=4$, these are the six labels $10,7,8,8,8,7$, and the three diagonals are the curves
drawn around the outside of the hexagon --- outside, because the faces they bound are the ones outside it.

\end{remark}

\subsection{Six constructions by hand}

In the \emph{square grid} $\mathbb{Z}^{2}$ every unit square is triangulated by one of its two diagonals; a vertex then
has degree $4$ plus the number of the four surrounding diagonals incident with it. In the \emph{triangular lattice},
with vertex set $\mathbb{Z}^{2}$ and $(i,j)$ adjacent to $(i,j)+d$ for $d\in \{\pm (1,0),\pm (0,1),\pm (-1,1)\}$,
every vertex has degree $6$; \emph{flipping} the edge from $v$ to $v+(1,0)$ replaces it by the edge from $v+(0,1)$ to
$v+(1,-1)$, lowering the degrees of the two old endpoints and raising those of the two new ones by one, and
\emph{stacking} a vertex into a face adds a vertex of degree $3$ and raises the degrees of the three corners by one.
Figures \ref{Fig_hand1} and \ref{Fig_hand2} show the six patterns.

The choice of move in each of the six cases is dictated by the smallest degree that is wanted. In
the square grid a vertex starts with its four grid neighbours and gains one for every diagonal around
it that ends at it, so the only degrees available are $4,5,6,7,8$ and the pattern of diagonals decides
which vertex receives which; this is the right vehicle when the minimum degree is to be $4$, as in
(a) and (f), and also when two degrees at distance three are wanted, as in (c), where each vertex of
the class of density $1/3$ takes all four diagonals and each vertex of the other two classes takes
exactly one. In the triangular lattice every vertex starts at $6$, and a flip lowers two degrees and
raises two degrees by one, so flips of pairwise disjoint rhombi produce degrees at distance at most
one from $6$ and nothing else; this is exactly what the pair $(5,1)$ asks for, and it is the content
of (b). Stacking is the
only one of the three moves that creates a vertex of degree $3$, and it raises three degrees at once,
so it is the vehicle for $\delta =3$: in (e) every upward triangle is stacked, which lifts every
lattice vertex from $6$ to $9$ in one stroke and gives as many new vertices of degree $3$ as there are
old ones, and in (d) only the triangles of every fourth strip are stacked, which leaves two rows out
of four at degree $6$ and produces the third degree of the pair $(3,2)$. In each case the densities
are then forced rather than chosen, by the identity $\sum_{v}(6-\deg (v))=0$ per fundamental domain.

%

\begin{proposition}
\label{Prop_hand}The following are periodic triangulations of the plane with the extremal densities of the indicated
pair, so that by Lemma \ref{Lemma_periodic} the corresponding bound is asymptotically tight.

\begin{itemize}
\item[\textrm{(a)}] $(\delta ,k)=(4,1)$, degrees $4,6,8$ in equal thirds, $\mathrm{MP}(n,4,1)=n/3+O(\sqrt{n})$:
the square grid in which the diagonal of the unit square with lower left corner $(x,y)$ joins $(x,y)$ to $(x+1,y+1)$
when $x$ is even and $y\not \equiv 2\ (\mathrm{mod}\ 3)$, or when $x$ is odd and $y\equiv 2\ (\mathrm{mod}\ 3)$, and
joins $(x+1,y)$ to $(x,y+1)$ otherwise.

\item[\textrm{(b)}] $(\delta ,k)=(5,1)$, degrees $5,7$ in equal halves, $\mathrm{MP}(n,5,1)=n/2+O(\sqrt{n})$:
the triangular lattice with every edge from $(2u,2w)$ to $(2u+1,2w)$ flipped, $u,w\in \mathbb{Z}$.

\item[\textrm{(c)}] $(\delta ,k)=(5,2)$, degrees $5,8$ in the proportion $2{:}1$, $\mathrm{MP}(n,5,2)=2n/3+O(\sqrt{n})$:
the square grid in which the diagonal of the unit square with lower left corner $(x,y)$ joins $(x,y)$ to $(x+1,y+1)$ if
$x-y\equiv 0\ (\mathrm{mod}\ 3)$ and joins $(x+1,y)$ to $(x,y+1)$ otherwise.

\item[\textrm{(d)}] $(\delta ,k)=(3,2)$, degrees $3,6,9$ in equal thirds, $\mathrm{MP}(n,3,2)=n/3+O(\sqrt{n})$: the
triangular lattice with a vertex stacked into every face of the strip between the rows $j=4w$ and $j=4w+1$, $w\in
\mathbb{Z}$.

\item[\textrm{(e)}] $(\delta ,k)=(3,5)$, degrees $3,9$ in equal halves, $\mathrm{MP}(n,3,5)=n/2+O(\sqrt{n})$: the
triangular lattice with a vertex stacked into every face of the form $\{v,v+(1,0),v+(0,1)\}$.

\item[\textrm{(f)}] $(\delta ,k)=(4,3)$, degrees $4,8$ in equal halves, $\mathrm{MP}(n,4,3)=n/2+O(\sqrt{n})$: the
square grid in which every diagonal joins the two corners of even coordinate sum.
\end{itemize}
\end{proposition}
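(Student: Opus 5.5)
The plan is the same for all six items. For each pattern we check three things. First, it is a periodic triangulation of the plane by a simple graph. Second, its degree densities are exactly the extremal densities of the indicated pair $(\delta ,k)$. Third, its minimum degree is that $\delta $. Lemma \ref{Lemma_periodic} then gives $\mathrm{MP}(n,\delta ,k)=c_{\delta ,k}n+O(\sqrt{n})$ at once; the minimum-degree condition is automatic, since $\delta $ is the smallest extremal degree.

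Being a triangulation is immediate for the three moves used. A choice of one diagonal in every unit square of $\mathbb{Z}^{2}$ triangulates the plane. Stacking a vertex into a face keeps a triangulation. Flipping the shared edge of a rhombus keeps a triangulation, and the two new ends are at lattice distance two, so no multiple edge appears. Each pattern is invariant under an explicit lattice: $2\mathbb{Z}\times 3\mathbb{Z}$ in (a), $2\mathbb{Z}^{2}$ in (b), the translations by $(1,1)$ and $(3,0)$ in (c), $\mathbb{Z}\times 4\mathbb{Z}$ in (d), and all of $\mathbb{Z}^{2}$ in (e) and (f) (with $2\mathbb{Z}^{2}$ sufficing in (f)). The degrees are then computed residue class by residue class.

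\textbf{Square grid, (a), (c), (f).} A vertex $(x,y)$ has degree $4$ plus the number of diagonals ending at it among those of the four squares with lower left corners $(x,y)$, $(x-1,y-1)$, $(x-1,y)$ and $(x,y-1)$.
\begin{itemize}
\item[(f)] Every vertex of even coordinate sum receives all four diagonals and every odd one none. This gives $8^{1/2}4^{1/2}$.
\item[(c)] If $x-y\equiv 0 \pmod 3$, all four diagonals end at the vertex, so its degree is $8$. In the two other classes exactly one anti-diagonal ends there, so the degree is $5$. This gives $5^{2/3}8^{1/3}$.
\item[(a)] The same bookkeeping over the six classes of $(x \bmod 2, y\bmod 3)$ should give two classes each of degrees $4$, $6$ and $8$.
\end{itemize}

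\textbf{Triangular lattice, (b), (d), (e).}
\begin{itemize}
\item[(b)] The rhombi $R(2u,2w)$ have vertices in the four distinct cosets of $2\mathbb{Z}^{2}$, so they are disjoint and cover every vertex exactly once. Each vertex is either an old end, with degree $5$, or a new end, with degree $7$, each occurring with density one half.
\item[(e)] Each lattice vertex lies in three upward faces and there is one upward face per vertex. This gives $9^{1/2}3^{1/2}$.
\item[(d)] Per column of the period there are four lattice vertices and two stacked faces. The two vertices on the rows $4w$ and $4w+1$ lie in three strip faces each and get degree $9$. The two on the other rows keep degree $6$. The two new vertices have degree $3$. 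This gives thirds of $3,6,9$.
\end{itemize}
In every case the identity $\sum (6-\deg )=0$ per fundamental domain serves as a check.

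\textbf{Matching with the extremal densities.} Finally we compare each density vector with the extremal densities read off from Theorems \ref{Tm_planar} and \ref{Tm_planar34}:
\begin{itemize}
\item $(4,1)$: $4^{s}6^{s}8^{n-2s}$ with $c=1/3$;
\item $(5,1)$: $5,7$ with $c=1/2$;
\item $(5,2)$: $5,8$ with $c=2/3$;
\item $(3,2)$: $3,6,9$ in thirds;
\item $(3,5)$: $3,9$ with $c=1/2$;
\item $(4,3)$: $4,8$ with $c=1/2$.
\end{itemize}

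The main obstacle is not conceptual but the case analysis in (a). There the diagonal rule depends on both parities of $x$ and residues of $y$, and one must check carefully that the degrees $5$ and $7$ never occur and that the three remaining degrees come out in equal thirds. I would tabulate the six classes explicitly.
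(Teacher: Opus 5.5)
Your proposal is correct and takes essentially the same route as the paper's proof: observe that each pattern is a periodic triangulation, compute the degrees residue class by residue class, and match the resulting densities with the equality sequences before invoking Lemma \ref{Lemma_periodic}. Two small points. First, the table you defer in (a) does come out as you expect: classes $(0,0),(1,0)$ give $8,4$; $(0,1),(1,1)$ give $6,6$; and $(0,2),(1,2)$ give $4,8$. Second, pattern (f) is \emph{not} invariant under all of $\mathbb{Z}^{2}$, because translation by $(1,0)$ swaps the even-sum and odd-sum classes; it is invariant only under translations of even coordinate sum, though $2\mathbb{Z}^{2}$ does suffice as a period lattice, as you say.
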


\begin{proof}
Each pattern is invariant under a lattice of translations and is a triangulation of the plane by construction (a
diagonal in every unit square; flips of pairwise disjoint rhombi; stackings into faces), so only the degrees have to
be computed.

(a) The pattern has period $(2,0),(0,3)$. For a vertex $(x,y)$ the four surrounding squares have lower left corners
$(x-1,y-1),(x,y-1),(x-1,y),(x,y)$, and the rule says which of their diagonals is incident with $(x,y)$. Going through
the six classes $(x\bmod 2,y\bmod 3)$ one finds the degrees $8,4$ for $(0,0),(1,0)$; $6,6$ for $(0,1),(1,1)$; $4,8$ for
$(0,2),(1,2)$. Thus in the rows $y\equiv 0,2\ (\mathrm{mod}\ 3)$ the vertices of degree $8$ (the crossings of two full
diagonals) alternate with vertices of degree $4$, staggered by one between the two rows, and the rows $y\equiv 1$
consist of vertices of degree $6$; the densities are $1/3,1/3,1/3$.

(b) The flipped edge from $(2u,2w)$ to $(2u+1,2w)$ lies in the rhombus with the further vertices $(2u,2w+1)$ and
$(2u+1,2w-1)$. The four vertices of this rhombus lie in the four cosets of the lattice $2\mathbb{Z}^{2}$, so the rhombi
are pairwise disjoint and partition $\mathbb{Z}^{2}$: a vertex in an even row is an endpoint of exactly one flipped
edge and a vertex in an odd row is a further vertex of exactly one rhombus. Hence even rows have degree $5$ and odd
rows degree $7$, each of density $1/2$.

(c) Let $c=x-y\bmod 3$ be the class of $(x,y)$; the two corners $(x,y)$, $(x+1,y+1)$ of a unit square have the same class
$c$ and the other two have the classes $c+1$ and $c-1$. By the rule, the square with lower left corner $(x,y)$ contributes
to the degree of $(x,y)$ if and only if $c=0$; the square with lower left corner $(x-1,y-1)$ (class $c$) likewise;
the square with lower left corner $(x-1,y)$ has class $c-1$ and its diagonal is incident with $(x,y)$ if and only if
$c-1\not \equiv 0$; the square with lower left corner $(x,y-1)$ has class $c+1$ and its diagonal is incident with $(x,y)$
if and only if $c+1\not \equiv 0$. Hence the vertices of class $0$ have degree $4+4=8$ and those of classes $1,2$ have
degree $4+1=5$: densities $1/3$ and $2/3$.

(d) A vertex of the triangular lattice lies in three faces of the strip above its row and three faces of the strip
below. A vertex in a row $\equiv 0\pmod 4$ lies in three stacked faces (the strip above it), a vertex in a row
$\equiv 1$ likewise (the strip below it), and vertices in rows $\equiv 2,3$ lie in no stacked face; the strip between
the rows $4w$ and $4w+1$ contains two faces per unit length, so the stacked vertices are as many as the vertices of two
rows. Per period of four rows we get two rows of degree $9$, two rows of degree $6$ and two rows' worth of vertices
of degree $3$: densities $1/3,1/3,1/3$.

(e) Every lattice vertex lies in exactly three faces of the form $\{v,v+(1,0),v+(0,1)\}$ (as $v$, as $v+(1,0)$ and as
$v+(0,1)$), so all lattice vertices get degree $9$; there is one such face per lattice vertex, so the stacked vertices
of degree $3$ are as many as the lattice vertices: densities $1/2,1/2$.

(f) Every unit square has exactly two corners of even coordinate sum, opposite to each other, so the rule is
consistent; a vertex of even coordinate sum receives the diagonal of each of its four squares and has degree $8$, a
vertex of odd coordinate sum receives none and has degree $4$: densities $1/2,1/2$.
\end{proof}

\begin{example}
\label{Ex_hand_a}The smallest $k$ occurring in Proposition \ref{Prop_hand} is $k=1$, in the patterns
(a) and (b). The pattern (a) is invariant under the translations $(2,0)$ and $(0,3)$, so a
fundamental domain is the set of the six vertices $(x,y)$ with $0\leq x<2$ and $0\leq y<3$, and their
degrees, read off from the rule, are
\[
\begin{array}{lll}
\deg (0,0)=8, & \deg (0,1)=6, & \deg (0,2)=4,\\
\deg (1,0)=4, & \deg (1,1)=6, & \deg (1,2)=8,
\end{array}
\]
so that the domain carries the degrees $4,6,8$ with multiplicities $2,2,2$. The check
$\sum_{v}(6-\deg (v))=2\cdot 2+2\cdot 0+2\cdot (-2)=0$ holds, the three densities are $1/3$ each, and
since consecutive degrees differ by $2$ while $k+1=2$, a window of two consecutive degrees contains at
most one of $4,6,8$; hence $w_{1}=1/3=c_{4,1}$. For the pattern (b) the periods are $(2,0)$ and
$(0,2)$, a fundamental domain is the four vertices $(x,y)$ with $0\leq x,y<2$, the two in the even row
have degree $5$ and the two in the odd row degree $7$, the check reads $2\cdot 1+2\cdot (-1)=0$, both
densities are $1/2$, and $7-5=2>1$ gives $w_{1}=1/2=c_{5,1}$.
\end{example}

Figure \ref{Fig_pat468} draws the pattern and one of its fundamental domains.

\begin{figure}[htbp]
\begin{center}
\includegraphics[width=0.36\textwidth]{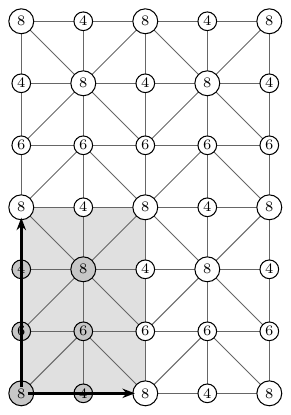}
\end{center}
\caption{The pattern (a) of Proposition \ref{Prop_hand}, with a fundamental domain shaded: the rows
$y\equiv 1\ (\mathrm{mod}\ 3)$ consist of vertices of degree $6$, the rows $y\equiv 0,2$ alternate the
degrees $8$ and $4$, staggered by one step, and the domain carries each of the three degrees exactly
twice.}%
\label{Fig_pat468}%
\end{figure}

\begin{figure}[htbp]
\begin{center}
\includegraphics[width=0.31\textwidth]{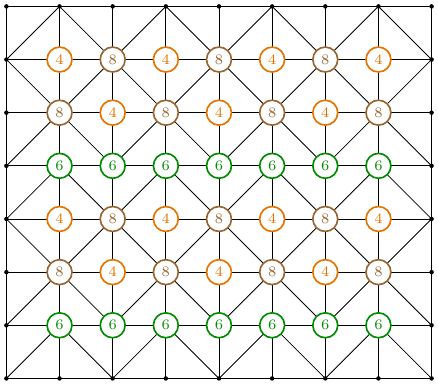}\hfill
\includegraphics[width=0.36\textwidth]{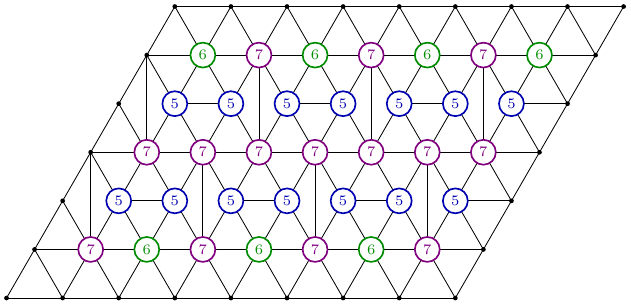}\hfill
\includegraphics[width=0.31\textwidth]{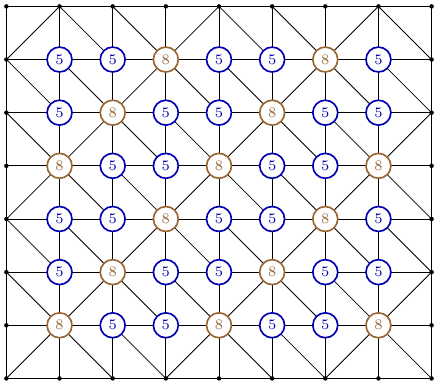}
\end{center}
\caption{The patterns (a), (b), (c) of Proposition \ref{Prop_hand}: degrees $4,6,8$ in the square grid; degrees $5,7$ in
the triangular lattice (the vertices shown with degree $6$ belong to rhombi cut by the window); degrees $5,8$ in the
square grid. Interior vertices are labelled with their degrees.}%
\label{Fig_hand1}%
\end{figure}

\begin{figure}[htbp]
\begin{center}
\includegraphics[width=0.36\textwidth]{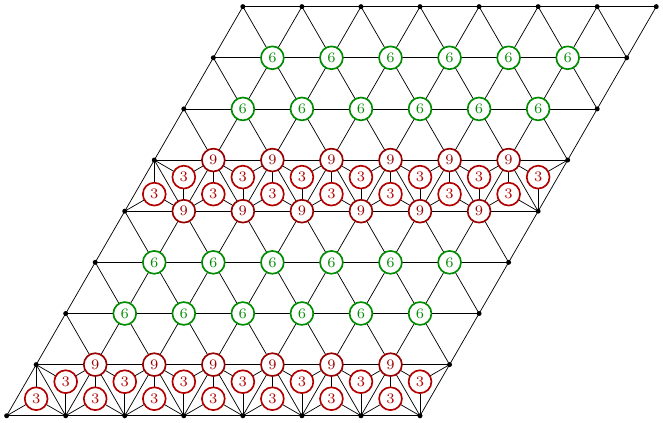}\hfill
\includegraphics[width=0.30\textwidth]{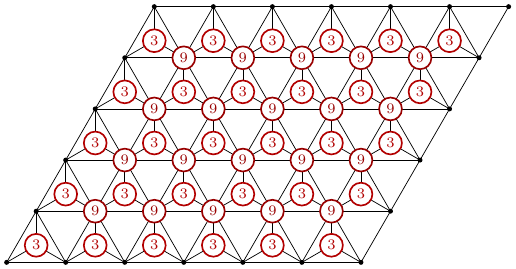}\hfill
\includegraphics[width=0.30\textwidth]{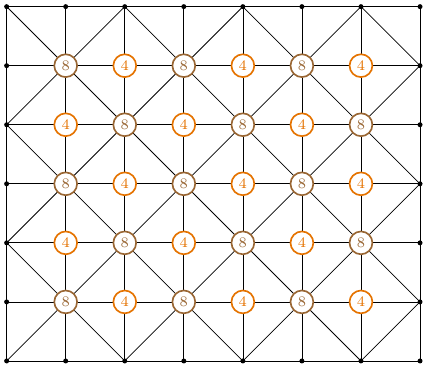}
\end{center}
\caption{The patterns (d), (e), (f) of Proposition \ref{Prop_hand}: degrees $3,6,9$ (stacked strips), $3,9$ (every
upward triangle stacked) and $4,8$ (the ``Union Jack'' grid).}%
\label{Fig_hand2}%
\end{figure}

\subsection{Minimum degree four: a construction for every $k\geq 3$}
\label{Sec_chain}

For $\delta =4$ one family settles all $k\geq 3$ at once, in the spirit of the unified construction of
\cite[Theorem 3.2(4)]{CaroLauriZarb} for maximal outerplanar graphs. The three smaller values of $k$ are
settled elsewhere: $(4,0)$ by the periodic triangulations of Caro and West \cite[Theorem 2.5]{CaroWest},
$(4,1)$ by Proposition \ref{Prop_hand}(a), and $(4,2)$ by the triangulation of the torus found by computer in
Section \ref{Sec_computer}. The extremal densities of $(4,k)$, $k\geq 3$, are
$(k-1)/(k+1)$ for the degree $4$ and $2/(k+1)$ for the degree $k+5$.

In words the construction is this. Only two degrees are to occur, namely $4$ and $k+5$, and the
vertices of degree $4$ are to outnumber the others in the ratio $(k-1):2$. The square grid supplies
the vertices of the large degree: a grid vertex keeps its four grid neighbours and has to collect
$k+1$ further ones from the four unit squares around it. The building block is one unit square with a
chain of $m$ new vertices strung across it, from the corner $Q$ to the opposite corner $S$, every
vertex of the chain being joined to the two ends $P$ and $R$ of the other diagonal. A vertex in the
middle of the chain then has two chain neighbours and the two vertices $P$ and $R$, which is already
four; the two vertices at the ends of the chain have only one chain neighbour each and are joined to
$Q$, respectively to $S$, instead. So every inserted vertex has degree exactly $4$, whatever $m$ is,
and the entire degree produced by the block is delivered to the grid.

A grid vertex $v=(x,y)$ lies in four unit squares. In two of them, the one above right of $v$ and the
one below left of it, $v$ is an end of the diagonal that the chain straddles and therefore sees every
one of the $m$ vertices of that chain; in the other two it sees only the single end of the chain that
reaches out to it. This gives $\deg (v)=4+m(y)+m(y-1)+2$, and the requirement $\deg (v)=k+5$ reads
$m(y)+m(y-1)=k-1$. For odd $k$ this is solved by the constant value $m=(k-1)/2$; for even $k$ the
number $k-1$ is odd and cannot be split into two equal parts, and the alternation of the rows in the
statement has no other purpose. A fundamental domain is accordingly one grid vertex together with the
chain of one square, or, for even $k$, two grid vertices together with the chains of two squares.



\begin{proposition}
\label{Prop_chain4}Let $k\geq 3$ and put $m(y)=(k-1)/2$ for every $y\in \mathbb{Z}$ if $k$ is odd, and $m(y)=k/2$ for
even $y$, $m(y)=k/2-1$ for odd $y$, if $k$ is even. In the square grid with vertex set $\mathbb{Z}^{2}$ insert into the
unit square with lower left corner $(x,y)$ a path $c_{1}c_{2}\cdots c_{m(y)}$ of new vertices, joining every $c_{i}$
to $P=(x,y)$ and to $R=(x+1,y+1)$, and in addition $c_{1}$ to $Q=(x+1,y)$ and $c_{m(y)}$ to $S=(x,y+1)$. The result is
a periodic triangulation of the plane in which every inserted vertex has degree $4$ and every grid vertex has degree
$k+5$, with the extremal densities of $(4,k)$. Consequently $\mathrm{MP}(n,4,k)=\frac{k-1}{k+1}n+O(\sqrt{n})$ for
every sufficiently large $n$ and every $k\geq 3$.
\end{proposition}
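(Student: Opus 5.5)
The proof has three parts. First we check that the construction is a periodic triangulation of the plane. Then we compute all degrees locally. Finally we read off the densities and invoke Lemma \ref{Lemma_periodic}.

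\emph{Triangulation.} Inside the unit square $PQRS$, with $P=(x,y)$, $Q=(x+1,y)$, $R=(x+1,y+1)$, $S=(x,y+1)$, the inserted path together with its attachments cuts the square into triangles. These are $Pc_{i}c_{i+1}$ and $Rc_{i}c_{i+1}$ for $1\leq i<m$, together with $PQc_{1}$, $QRc_{1}$, $RSc_{m}$ and $SPc_{m}$. One checks that every edge inside the square lies in exactly two of these faces and that every side of the square lies in exactly one. Since $k\geq 3$, we have $m(y)\geq 1$ in every row, so this covers every square; for $m=1$ the single vertex is joined to all four corners, giving four triangles. The graph is simple: new vertices are adjacent only within their own square, and the only grid edges are the sides of the squares. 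It is invariant under the translation $(1,0)$ and under $(0,1)$ for odd $k$, or $(0,2)$ for even $k$. Hence it is a periodic triangulation.

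\emph{Degrees.} An inner chain vertex $c_{i}$ with $1<i<m$ has neighbours $c_{i-1},c_{i+1},P,R$. The end $c_{1}$ has neighbours $c_{2},P,R,Q$, and $c_{m}$ has $c_{m-1},P,R,S$. When $m=1$ the single vertex is adjacent to $P,Q,R,S$. So every inserted vertex has degree $4$.

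A grid vertex $v=(x,y)$ has its four grid neighbours. It is the corner $P$ of the square with lower left corner $(x,y)$, which contributes $m(y)$ neighbours. It is the corner $R$ of the square at $(x-1,y-1)$, contributing $m(y-1)$. It is the corner $Q$ of the square at $(x-1,y)$, contributing one neighbour ($c_{1}$), and the corner $S$ of the square at $(x,y-1)$, contributing one ($c_{m}$). Thus $\deg(v)=6+m(y)+m(y-1)$. By the choice of $m$, in both parities $m(y)+m(y-1)=k-1$, so $\deg(v)=k+5$.

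\emph{Densities.} A fundamental domain consists of one grid vertex and $(k-1)/2$ chain vertices for odd $k$. For even $k$ it consists of two grid vertices and $k/2+(k/2-1)=k-1$ chain vertices. In both cases grid vertices make up $2/(k+1)$ of all vertices and degree-$4$ vertices make up $(k-1)/(k+1)$. The Euler check over a domain with two grid vertices and $k-1$ chain vertices reads $2(6-(k+5))+2(k-1)=0$, as it must.

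These are the extremal densities of $(4,k)$ for $k\geq3$: the equality sequence is $4^{s}(k+5)^{n-s}$ with $t=1$ and $h=k+1$, so the two degrees are exactly $\delta$ and $\delta+h$. The minimum degree of the pattern is $4$. Lemma \ref{Lemma_periodic} then gives $\mathrm{MP}(n,4,k)=\frac{k-1}{k+1}n+O(\sqrt n)$ for all large $n$.

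The only step needing care is the triangulation check, in particular the faces at the two chain ends and the degenerate case $m=1$. Everything else is local bookkeeping.
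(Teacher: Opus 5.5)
Your proposal is correct and follows the paper's proof essentially step for step: the same face list, the same local degree count $6+m(y)+m(y-1)=k+5$, the same periods and fundamental domains, and the same appeal to Lemma \ref{Lemma_periodic}. The one small difference is in the planarity step. The paper places the $c_i$ in order on the segment from $Q$ to $S$, so that $P$ and $R$ lie on opposite sides and the drawing is visibly plane. You instead verify the edge--face incidences combinatorially, which suffices here but is less immediate than the explicit drawing.
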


\begin{proof}
Draw the $c_{i}$ of the square with lower left corner $(x,y)$ on the segment from $Q$ to $S$, in this order, so that
$P$ and $R$ lie on opposite sides of that segment. All the edges listed are then drawn without crossings, and the
square is divided into the $2m(y)+2$ triangles
\[
PQc_{1},\quad QRc_{1},\quad Pc_{i}c_{i+1},\quad Rc_{i}c_{i+1}\ (1\leq i<m(y)),\quad RSc_{m(y)},\quad SPc_{m(y)}
\]
(for $m(y)=1$ the four triangles $PQc_{1},QRc_{1},RSc_{1},SPc_{1}$), so the plane is triangulated and the graph is
simple. An inserted vertex $c_{i}$ with $1<i<m(y)$ is adjacent to $P,R,c_{i-1},c_{i+1}$, and $c_{1}$ is adjacent to
$P,R,Q$ and to $c_{2}$ (to $S$ if $m(y)=1$), likewise $c_{m(y)}$; so every inserted vertex has degree $4$. A grid
vertex $v=(x,y)$ lies in four unit squares: it is the $P$-corner of the square $(x,y)$ and the $R$-corner of the square
$(x-1,y-1)$, hence adjacent to all $m(y)$ resp.\ all $m(y-1)$ vertices inserted there, and it is the $Q$-corner of the
square $(x-1,y)$ and the $S$-corner of the square $(x,y-1)$, hence adjacent to one inserted vertex in each. With its
four grid neighbours,
\[
\deg (v)=4+m(y)+m(y-1)+2=6+m(y)+m(y-1)=k+5 .
\]
The construction is invariant under the translations $(1,0)$ and $(0,1)$ for odd $k$, and under $(1,0)$ and $(0,2)$ for
even $k$. A fundamental domain therefore consists of one grid vertex and $m=(k-1)/2$ inserted vertices for odd $k$, and
of two grid vertices and $m(0)+m(1)=k-1$ inserted vertices for even $k$; in both cases the densities are
$\frac{k-1}{k+1}$ for the degree $4$ and $\frac{2}{k+1}$ for the degree $k+5$, which are the extremal densities of
$(4,k)$. Lemma \ref{Lemma_periodic} gives the asymptotics, and Theorem \ref{Tm_planar34} the matching lower bound.
\end{proof}

Figure \ref{Fig_chainblock} draws the block with $m=3$.

\begin{figure}[htbp]
\begin{center}
\includegraphics[width=0.5\textwidth]{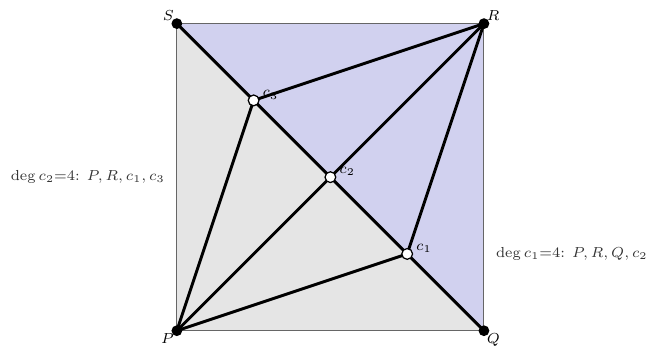}
\end{center}
\caption{The building block of Proposition \ref{Prop_chain4} with $m=3$: the square is fanned out from $P$
and from $R$, the chain $c_{1}c_{2}c_{3}$ is the seam between the two fans, and every inserted vertex has
exactly four neighbours whatever the length of the chain.}%
\label{Fig_chainblock}%
\end{figure}

Figure \ref{Fig_chain4} shows the cases $k=5$ and $k=6$. For $k=1$ and $k=2$ the pattern degenerates and the two
constructions of Proposition \ref{Prop_hand}(a) and of Section \ref{Sec_computer} are used instead.

\begin{example}
\label{Ex_chain3}Take $k=3$, the smallest value for which the pattern does not degenerate; here $m=1$
in every row, so a single vertex $c_{1}$ is inserted into each unit square and is joined to all four
of its corners. The periods are $(1,0)$ and $(0,1)$, and a fundamental domain consists of the two
vertices $(0,0)$ and $c_{1}$ of the square with lower left corner $(0,0)$, of degrees $k+5=8$ and $4$
respectively. The degrees of the domain are thus $4$ and $8$ with multiplicity one each,
$\sum_{v}(6-\deg (v))=2+(-2)=0$, and both densities are $1/2$, in agreement with
$\frac{k-1}{k+1}=\frac{2}{k+1}=\frac{1}{2}$. As $8-4=4>3=k$, no window of $k+1=4$ consecutive degrees
contains both, so $w_{3}=1/2=c_{4,3}$. Written as in Section \ref{Sec_computer}, with $0$ the
inserted vertex and $1$ the grid vertex, the domain is
\[
\begin{array}{l}
0:\ 1_{(1,1)}\ 1_{(0,1)}\ 1\ 1_{(1,0)};\\
1:\ 1_{(1,0)}\ 0\ 1_{(0,1)}\ 0_{(-1,0)}\ 1_{(-1,0)}\ 0_{(-1,-1)}\ 1_{(0,-1)}\ 0_{(0,-1)},
\end{array}
\]
with $N=2$ vertices, $E=6$ edges and $F=4$ faces. This instance is not new: mapping
$(x,y)\mapsto \bigl( \tfrac{x+y}{2},\tfrac{y-x}{2}\bigr) $ on the vertices of even coordinate sum and
$(x,y)\mapsto c_{1}$ of the square with lower left corner
$\bigl( \tfrac{x+y-1}{2},\tfrac{y-x-1}{2}\bigr) $ on the vertices of odd coordinate sum carries the
grid of Proposition \ref{Prop_hand}(f) onto the present triangulation, the four neighbours of a
vertex of odd coordinate sum being precisely the four corners of a unit square in the new coordinates.

The smallest even case is $k=4$, where $m(y)=2$ for even $y$ and $m(y)=1$ for odd $y$. The periods
are $(1,0)$ and $(0,2)$, and a fundamental domain consists of two grid vertices, of degree
$k+5=9$, together with the two inserted vertices of one square of an even row and the one inserted
vertex of one square of an odd row, all of degree $4$: the degrees are $4$ with multiplicity three and
$9$ with multiplicity two, $\sum_{v}(6-\deg (v))=3\cdot 2+2\cdot (-3)=0$, the densities are $3/5$ and
$2/5$, as $\frac{k-1}{k+1}$ and $\frac{2}{k+1}$ demand, and $9-4=5>4=k$ gives $w_{4}=3/5=c_{4,4}$.
\end{example}

Figure \ref{Fig_chain34} draws the two patterns with their fundamental domains; the
two panels show why the domain grows when the chain lengths of the rows differ.

\begin{figure}[htbp]
\begin{center}
\includegraphics[width=0.42\textwidth]{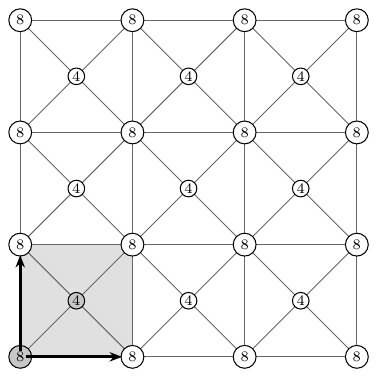}\hfill
\includegraphics[width=0.42\textwidth]{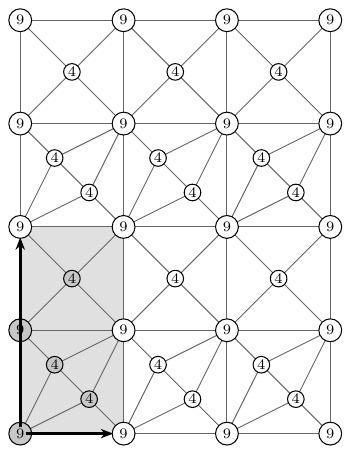}
\end{center}
\caption{The diagonal-chain triangulations of Example \ref{Ex_chain3}, with their fundamental domains shaded.
Left, $k=3$: one grid vertex of degree $8$ and one inserted vertex of degree $4$. Right, $k=4$: the domain is
two squares tall, because the two rows carry chains of different lengths, and it holds three vertices of
degree $4$ against two of degree $9$.}%
\label{Fig_chain34}%
\end{figure}

\begin{figure}[htbp]
\begin{center}
\includegraphics[width=0.42\textwidth]{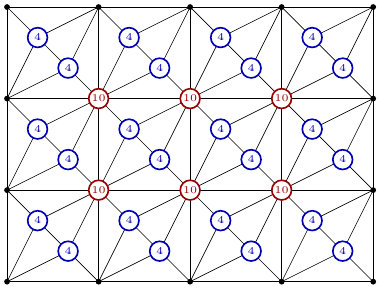}\hfill
\includegraphics[width=0.42\textwidth]{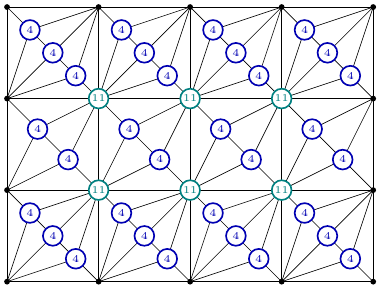}
\end{center}
\caption{The diagonal-chain triangulations of Proposition \ref{Prop_chain4} for $k=5$ (two inserted vertices in every
unit square; degrees $4$ and $10$) and for $k=6$ (three inserted vertices in the even rows, two in the odd ones;
degrees $4$ and $11$). Interior vertices are labelled with their degrees.}%
\label{Fig_chain4}%
\end{figure}

\subsection{Minimum degree five: wheels glued along their rims}
\label{Sec_wheel}

For $\delta =5$ there is a family of the same kind for every $k\geq 2$, and it has a suggestive
description: the vertices of degree $k+6$
are the hubs of wheels $W_{k+6}$, and the wheels are glued along their rims, every hub lying on the rims of two
further wheels. The extremal densities of $(5,k)$ are $\frac{k}{k+1}$ for the degree $5$ and $\frac{1}{k+1}$ for the
degree $k+6$. The two smaller values of $k$ are settled elsewhere: $(5,0)$ by the family of Section \ref{Sec_hex},
which also makes the error term explicit, and again by \cite[Theorem 2.5]{CaroWest}; and $(5,1)$ by
Proposition \ref{Prop_hand}(b). Note that a periodic triangulation using no degrees other than $5$ and $k+6$ has these densities
automatically, since $\sum_{v}(6-\deg (v))=0$ per fundamental domain.

Spelling this out: if a fraction $d$ of the vertices has degree $k+6$ and the remaining fraction
$1-d$ has degree $5$, then $(1-d)\cdot 1+d\cdot (-k)=0$, so $d=\frac{1}{k+1}$. The task is therefore
to build a triangulation in which one vertex in every $k+1$ has degree $k+6$ and every other vertex
has degree $5$. A hub together with its rim accounts for $k+7$ vertices, which is more than the $k+1$
vertices per hub that the density allows, so the wheels cannot be disjoint: some hubs must lie on the
rims of others. In the construction below this happens in the simplest possible way, every hub lying
on the rims of exactly two others, the ones directly above and below it, so that the wheels of one
column of the grid form a ladder.

The building block is again a unit square, now carrying two parallel chains
$a_{1}\cdots a_{m}$ and $b_{1}\cdots b_{m}$ that run alongside the diagonal $PR$, the first on the
side of $Q$ and the second on the side of $S$, joined to each other by rungs: a ladder of $2m$
vertices. An inserted vertex in the middle of a chain has two chain neighbours, two rungs and one
edge of the fan at $Q$ or at $S$, so it reaches degree $5$ at once. The four vertices at the ends of
the two chains are one edge short, and the last step of the construction is what repairs them: a
horizontal edge of the grid lies in exactly two triangles, one in the square above it and one in the
square below it, and flipping it inside that quadrilateral replaces it by the edge from $a_{1}$ of the
square above to $b_{m}$ of the square below. Each flip hands $a_{1}$ and $b_{m}$ their fifth
neighbour and takes one neighbour from each of the two grid vertices at its ends; without the flips a
grid vertex would have degree $k+8$ and the four chain ends would have degree $4$. The count at a
grid vertex is then as in Section \ref{Sec_chain}: two vertical grid neighbours, the two chain ends
$a_{1},b_{1}$ of the square of which $v$ is the corner $P$, the two chain ends $a_{m},b_{m}$ of the
square of which $v$ is the corner $R$, the whole $a$-chain of the square of which $v$ is the corner
$Q$, and the whole $b$-chain of the square of which $v$ is the corner $S$, that is
$6+m(y)+m(y-1)$. The requirement that this be $k+6$ reads $m(y)+m(y-1)=k$, which for even $k$ is met
by the constant $m=k/2$ and for odd $k$ forces the two rows of the period to differ, as in Section
\ref{Sec_chain}.

\begin{proposition}
\label{Prop_wheel}Let $k\geq 2$ and put $m(y)=k/2$ for every $y\in \mathbb{Z}$ if $k$ is even, and $m(y)=(k-1)/2$ for
even $y$, $m(y)=(k+1)/2$ for odd $y$, if $k$ is odd. In the square grid with vertex set $\mathbb{Z}^{2}$ insert into
the unit square with lower left corner $P=(x,y)$, writing $Q=(x+1,y)$, $R=(x+1,y+1)$, $S=(x,y+1)$ and $m=m(y)$, two
paths $a_{1}a_{2}\cdots a_{m}$ and $b_{1}b_{2}\cdots b_{m}$ of new vertices, and join

\begin{itemize}
\item[\textrm{(i)}] $Qa_{i}$ and $Sb_{i}$ for $1\leq i\leq m$ \emph{(the two fans)};
\item[\textrm{(ii)}] $Pa_{1}$, $Pb_{1}$, $Ra_{m}$, $Rb_{m}$ \emph{(the four ends)};
\item[\textrm{(iii)}] $a_{1}b_{1}$, and $a_{i}b_{i-1}$, $a_{i}b_{i}$ for $2\leq i\leq m$ \emph{(the rungs)}.
\end{itemize}

Keep the vertical edges of the grid, and replace every horizontal edge $(x,y)(x+1,y)$ by the edge joining the vertex
$a_{1}$ of the square with lower left corner $(x,y)$ to the vertex $b_{m(y-1)}$ of the square with lower left corner
$(x,y-1)$. The result $T_{k}$ is a simple periodic triangulation of the plane in which every inserted vertex has degree
$5$ and every grid vertex $v$ has degree
\[
\deg (v)=6+m(y)+m(y-1)=k+6 ,
\]
the closed neighbourhood of $v$ being a wheel $W_{k+6}$ two of whose rim vertices, the grid vertices above and below
$v$, are again hubs. The densities are $\frac{k}{k+1}$ for the degree $5$ and $\frac{1}{k+1}$ for the degree $k+6$,
so that $\mathrm{MP}(n,5,k)=\frac{k}{k+1}n+O(\sqrt{n})$ for every sufficiently large $n$ and every $k\geq 2$; the
same holds for $k=1$ by Proposition \ref{Prop_hand}(b).
\end{proposition}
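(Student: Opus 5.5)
The proof follows the pattern of Proposition \ref{Prop_chain4}. First I check that $T_{k}$ is a triangulation of the plane. Then I compute degrees: first for inserted vertices, then for grid vertices. Then I read off the densities. Finally I invoke Lemma \ref{Lemma_periodic} and Theorem \ref{Tm_planar}.

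\emph{Planarity and triangulation.} Inside the square with corners $P,Q,R,S$ I draw the $a$-chain on the $Q$-side of the diagonal $PR$ and the $b$-chain on the $S$-side, and I list the faces explicitly. These are the fan triangles $Qa_{i}a_{i+1}$ and $Sb_{i}b_{i+1}$, the ladder triangles $a_{i}b_{i-1}b_{i}$ and $a_{i}a_{i+1}b_{i}$ (read off from the rungs (iii)), and the end triangles $Pa_{1}b_{1}$ and $Ra_{m}b_{m}$. Together with the still present horizontal edges, the quadrilateral pieces are $PQa_{1}$, $RQa_{m}$, $PSb_{1}$ and $RSb_{m}$. So before the flips the square is triangulated, and the horizontal edge $(x,y)(x+1,y)$ lies in exactly two triangles. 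One is $PQa_{1}$ of the square above. The other is the triangle $S'R'b'_{m}$ of the square below, since that edge is the side $S'R'$ there.

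Replacing the edge by $a_{1}b'_{m(y-1)}$ is an edge flip in that quadrilateral, so the result is again a triangulation. For simplicity I only need that $a_{1}$ and $b'_{m}$ were not already adjacent, which holds because they lie in different squares. For $m=1$ the rung list is just $a_{1}b_{1}$, and I check that case separately. Periodicity under $(1,0)$ and $(0,1)$, respectively $(1,0)$ and $(0,2)$ for odd $k$, is immediate.

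\emph{Degrees.} For inserted vertices, an interior $a_{i}$ has two chain neighbours, $Q$, and the two rungs $b_{i-1},b_{i}$, giving degree $5$. Each end vertex $a_{1}$, $a_{m}$, $b_{1}$, $b_{m}$ loses one rung and gains a corner. The vertex $a_{1}$ gains $P$ and also the flipped edge; I will check the count rung-by-rung, and the ends are where I expect the bookkeeping to be most error-prone. For a grid vertex $v=(x,y)$:
\begin{itemize}
\item it keeps its two vertical neighbours;
\item as corner $P$ of square $(x,y)$ it sees $a_{1},b_{1}$;
\item as corner $R$ of square $(x-1,y-1)$ it sees $a_{m},b_{m}$;
\item as corner $Q$ of square $(x-1,y)$ it sees the whole $a$-chain, $m(y)$ vertices;
\item as corner $S$ of square $(x,y-1)$ it sees the whole $b$-chain, $m(y-1)$ vertices;
\item both horizontal edges at $v$ are removed by the flips.
\end{itemize}
This gives $6+m(y)+m(y-1)=k+6$ by the choice of $m$. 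The link of $v$ is a cycle of length $k+6$ containing $(x,y\pm 1)$, which yields the wheel description.

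\emph{Densities.} A fundamental domain has one grid vertex and $2m=k$ inserted vertices, or for odd $k$ two grid vertices and $2(m(0)+m(1))=2k$ inserted vertices. So the density of degree $k+6$ is $1/(k+1)$ and the density of degree $5$ is $k/(k+1)$, which are the extremal densities. This is consistent with $\sum(6-\deg)=0$. Lemma \ref{Lemma_periodic} then gives the upper bound, and Theorem \ref{Tm_planar}(i) gives the lower bound. For $k=1$ I cite Proposition \ref{Prop_hand}(b).

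The main obstacle is the local verification near the chain ends and at the flips. There I must confirm that the face list really closes up without a missing or doubled triangle and that each end vertex ends at exactly degree $5$. I would settle this first by drawing the case $m=1$ and then the case $m=2$ explicitly.
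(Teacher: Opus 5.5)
Your plan is the paper's proof step for step. It uses the same face list in each square (your ladder triangles $a_ib_{i-1}b_i$, $a_ia_{i+1}b_i$ are the paper's $a_{i+1}b_ib_{i+1}$, $a_ia_{i+1}b_i$). It flips each horizontal edge inside the quadrilateral formed by $PQa_1$ of the square above and $S'R'b'_m$ of the square below, with simplicity because the new edge joins vertices of different squares. It counts $2+2+2+m(y)+m(y-1)$ at a grid vertex, and uses the same fundamental domains with $k$ resp.\ $2k$ inserted vertices.

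The one step you leave open, the degrees of the chain ends, is also the one you describe incorrectly: an end does not ``lose one rung and gain a corner''. For $m\geq 2$, every end has a single path neighbour and gains a corner ($P$ or $R$). The ends $a_m$ and $b_1$ keep both rungs ($a_mb_{m-1},a_mb_m$ and $b_1a_1,b_1a_2$), so they already have degree five before any flip. The ends $a_1$ and $b_m$ have only one rung ($a_1b_1$, respectively $a_mb_m$) and get their fifth neighbour from the flipped edge. Explicitly, $N(a_1)=\{Q,P,a_2,b_1,b'_{m(y-1)}\}$, $N(a_m)=\{Q,R,a_{m-1},b_{m-1},b_m\}$ and $N(b_1)=\{S,P,b_2,a_1,a_2\}$. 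The set $N(b_m)$ is $\{S,R,b_{m-1},a_m\}$ together with the $a_1$ of the square above. For $m=1$ the neighbourhoods are $\{Q,P,R,b_1\}$ and $\{S,P,R,a_1\}$, each together with its flip partner. Taken literally, your heuristic would put $a_1$ at degree six.

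You should also say why all flips may be made simultaneously. The triangle $PQa_1$ is used only by the bottom edge of its square and $SRb_m$ only by the top edge, so the quadrilaterals of distinct horizontal edges have disjoint interiors. With these two points filled in, the argument is complete.
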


\begin{proof}
Draw $a_{i}$ and $b_{i}$ at $P+t_{i}(1,1)+\varepsilon (1,-1)$ and $P+t_{i}(1,1)+\varepsilon (-1,1)$, where
$t_{i}=i/(m+1)$ and $\varepsilon >0$ is small: the two paths run alongside the diagonal $PR$, the $a$-path on the side
of $Q$ and the $b$-path on the side of $S$, and all the edges (i)--(iii) together with the four sides of the square are
drawn without crossings. Before the horizontal edges are treated, the square is divided into the $4m+2$ triangles
\[
Pa_{1}b_{1},\quad a_{i}a_{i+1}b_{i},\quad a_{i+1}b_{i}b_{i+1}\ (1\leq i<m),\quad Ra_{m}b_{m}
\]
between the two paths, the $m+1$ triangles $PQa_{1}$, $Qa_{i}a_{i+1}$ $(1\leq i<m)$, $Qa_{m}R$ on the side of $Q$, and
the $m+1$ triangles $PSb_{1}$, $Sb_{i}b_{i+1}$ $(1\leq i<m)$, $Sb_{m}R$ on the side of $S$; so the plane is
triangulated. A horizontal edge $(x,y)(x+1,y)$ lies in exactly two of these triangles, namely $PQa_{1}$ of the square
above it and $Sb_{m}R$ of the square below it, whose union is the quadrilateral with vertices $(x,y)$, $a_{1}$,
$(x+1,y)$, $b_{m(y-1)}$; replacing the diagonal $(x,y)(x+1,y)$ of this quadrilateral by the other one,
$a_{1}b_{m(y-1)}$, leaves a triangulation. Distinct horizontal edges give disjoint quadrilaterals, so all these flips
may be performed simultaneously, and the result is a triangulation of the plane. It is simple because the vertices
inserted in different squares are distinct and no two of the listed edges join the same pair.

Now the degrees. Let $1<i<m$. Then $a_{i}$ is adjacent to $Q$, to $a_{i-1}$ and $a_{i+1}$, and to $b_{i-1}$ and
$b_{i}$; and $b_{i}$ is adjacent to $S$, to $b_{i-1}$ and $b_{i+1}$, and to $a_{i}$ and $a_{i+1}$: degree $5$. The
vertex $a_{1}$ is adjacent to $Q$, $P$, $a_{2}$, $b_{1}$ and, after the flip, to the vertex $b_{m(y-1)}$ of the square
below; $a_{m}$ to $Q$, $R$, $a_{m-1}$, $b_{m-1}$, $b_{m}$; $b_{1}$ to $S$, $P$, $b_{2}$, $a_{1}$, $a_{2}$; and $b_{m}$
to $S$, $R$, $b_{m-1}$, $a_{m}$ and, after the flip, to the vertex $a_{1}$ of the square above: again degree $5$. (For
$m=1$ the two inserted vertices are $a_{1}$, adjacent to $Q,P,R,b_{1}$ and its flip partner below, and $b_{1}$,
adjacent to $S,P,R,a_{1}$ and its flip partner above.) A grid vertex $v=(x,y)$ is the $P$-corner of the square
$(x,y)$, the $Q$-corner of $(x-1,y)$, the $R$-corner of $(x-1,y-1)$ and the $S$-corner of $(x,y-1)$; it has lost its two
horizontal grid neighbours and kept the two vertical ones, and it is adjacent to $a_{1},b_{1}$ of the square $(x,y)$,
to all $m(y)$ vertices $a_{i}$ of the square $(x-1,y)$, to $a_{m},b_{m}$ of the square $(x-1,y-1)$ and to all $m(y-1)$
vertices $b_{i}$ of the square $(x,y-1)$. This gives $2+2+2+m(y)+m(y-1)=k+6$, and reading these neighbours in the
cyclic order in which they occur around $v$ exhibits the wheel: counterclockwise from the east,
\begin{align*}
&a_{1},b_{1}\ \text{of }(x,y);\quad (x,y+1);\quad a_{m},\ldots ,a_{1}\ \text{of }(x-1,y);\\
&b_{m},a_{m}\ \text{of }(x-1,y-1);\quad (x,y-1);\quad b_{1},\ldots ,b_{m}\ \text{of }(x,y-1),
\end{align*}
consecutive entries being adjacent (a rung, a fan edge, an end, a path edge, a flipped edge, and so on). Two of its rim
vertices, $(x,y\pm 1)$, are hubs, so the wheels of a column are glued along their rims.

Finally $T_{k}$ is invariant under the translations $(1,0)$ and $(0,1)$ for even $k$, and under $(1,0)$ and $(0,2)$ for
odd $k$. A fundamental domain consists of one grid vertex and the $2m=k$ inserted vertices of one square when $k$ is
even, and of two grid vertices and the $(k-1)+(k+1)=2k$ inserted vertices of two squares when $k$ is odd; in both cases
the densities are $\frac{1}{k+1}$ for the degree $k+6$ and $\frac{k}{k+1}$ for the degree $5$, the extremal densities
of $(5,k)$. Lemma \ref{Lemma_periodic} now gives the asymptotics for $k\geq 2$, and Theorem \ref{Tm_planar} the
matching lower bound; the case $k=1$ is Proposition \ref{Prop_hand}(b).
\end{proof}

Figure \ref{Fig_wheelblock} draws the block and the way consecutive wheels are
glued along their rims.

\begin{figure}[htbp]
\begin{center}
\includegraphics[width=0.46\textwidth]{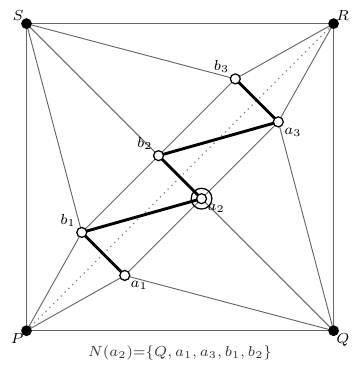}\hfill
\includegraphics[width=0.3\textwidth]{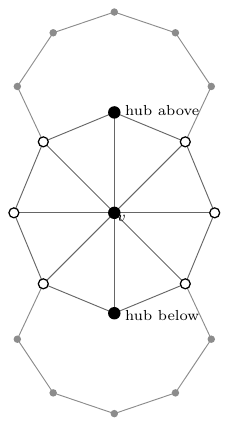}
\end{center}
\caption{Left: the building block of Proposition \ref{Prop_wheel} with $m=3$; the ladder of rungs is what
gives every inserted vertex degree five, and the dotted diagonal is a guide line, not an edge. Right: the
wheels of one column for $k=2$; each hub lies antipodally on the rim of the hub above it and of the hub below
it, and consecutive wheels share a spoke and two rim edges.}%
\label{Fig_wheelblock}%
\end{figure}

Figure \ref{Fig_wheel} shows the cases $k=6$ and $k=7$.

\begin{example}
\label{Ex_wheel2}Take $k=2$, the smallest value the proposition allows; here $m=1$ in every row, so
each unit square receives just the two vertices $a_{1}$ and $b_{1}$, joined to each other, to $P$ and
to $R$, and to $Q$ respectively $S$, and each of them is given its fifth neighbour by one of the
flipped horizontal edges. The periods are $(1,0)$ and $(0,1)$, and a fundamental domain consists of
the grid vertex $(0,0)$ and the two inserted vertices of the square with lower left corner $(0,0)$.
The degrees of the domain are $5$ with multiplicity two and $k+6=8$ with multiplicity one, so
$\sum_{v}(6-\deg (v))=2\cdot 1+(-2)=0$, the densities are $2/3$ and $1/3$, as $\frac{k}{k+1}$ and
$\frac{1}{k+1}$ demand, and since $8-5=3>2=k$ no window of $k+1=3$ consecutive degrees contains both:
$w_{2}=2/3=c_{5,2}$. Written as in Section \ref{Sec_computer}, with $0=a_{1}$, $1=b_{1}$ and $2$ the
hub, the domain is
\[
\begin{array}{l}
0:\ 2_{(1,1)}\ 1\ 2\ 1_{(0,-1)}\ 2_{(1,0)};\\
1:\ 2_{(1,1)}\ 0_{(0,1)}\ 2_{(0,1)}\ 2\ 0;\\
2:\ 0\ 1\ 2_{(0,1)}\ 0_{(-1,0)}\ 1_{(-1,-1)}\ 0_{(-1,-1)}\ 2_{(0,-1)}\ 1_{(0,-1)},
\end{array}
\]
with $N=3$ vertices, $E=9$ edges and $F=6$ faces. The list of the hub is the general cyclic order of
the proof with $m=1$: its own $a_{1}$ and $b_{1}$, the hub above, the $a$-chain of the square to the
left, the $b$- and $a$-ends of the square below left, the hub below, and the $b$-chain of the square
below. The two hubs on the rim stand at cyclic distance four on a rim of eight, that is antipodally;
the same is true of the vertices of degree $8$ in the pattern of Proposition \ref{Prop_hand}(c), which
realises the same pair $(5,2)$.

The smallest odd case is $k=3$, where $m(y)=1$ for even $y$ and $m(y)=2$ for odd $y$. The periods are
$(1,0)$ and $(0,2)$, a fundamental domain consists of two hubs of degree $k+6=9$ and of the
$2+4=6$ inserted vertices of one square in each of the two rows, all of degree $5$; then
$\sum_{v}(6-\deg (v))=6\cdot 1+2\cdot (-3)=0$, the densities are $3/4$ and $1/4$, and $9-5=4>3=k$
gives $w_{3}=3/4=c_{5,3}$.
\end{example}

Figure \ref{Fig_wheel2} draws this triangulation, with the wheel of one grid vertex
picked out in bold.

\begin{figure}[htbp]
\begin{center}
\includegraphics[width=0.5\textwidth]{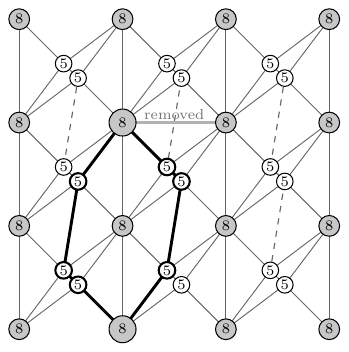}
\end{center}
\caption{The wheel-ladder triangulation of Example \ref{Ex_wheel2}, for $k=2$. Around every grid vertex there
is a wheel on eight rim vertices, drawn in bold for the vertex $(1,1)$; the grid vertices of one column lie on
each other's rims; and the dashed edges are the flips that replace the horizontal grid edges and hand two
units of degree to the ends of the chains.}%
\label{Fig_wheel2}%
\end{figure}

\begin{figure}[htbp]
\begin{center}
\includegraphics[width=0.44\textwidth]{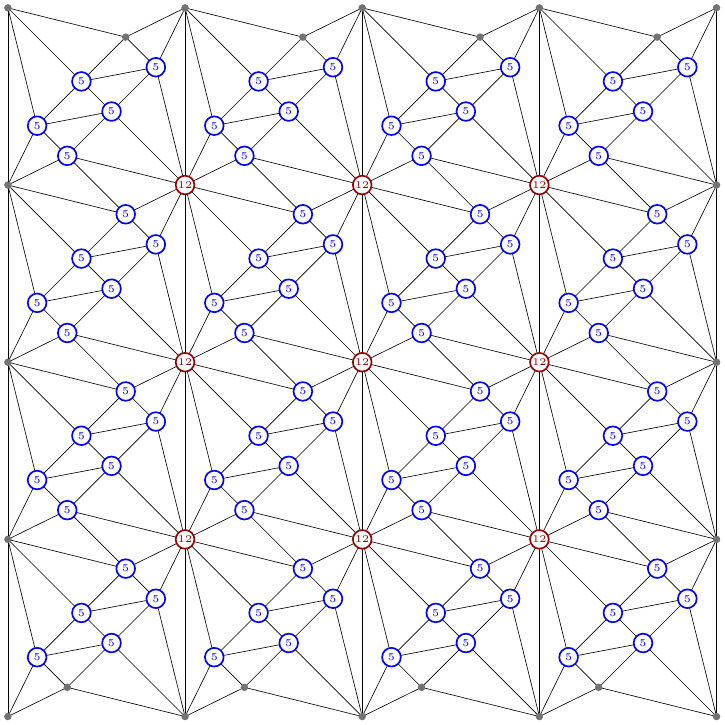}\hfill
\includegraphics[width=0.44\textwidth]{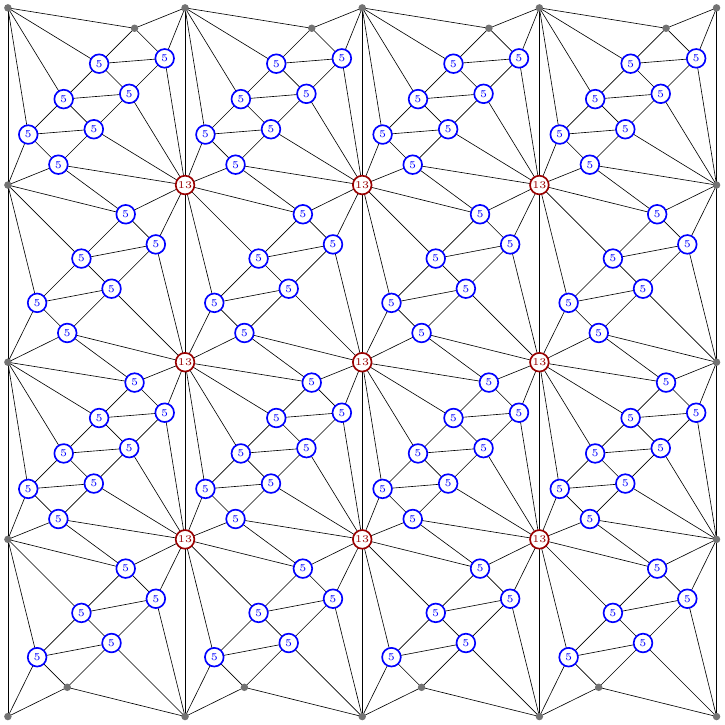}
\end{center}
\caption{The wheel-ladder triangulations of Proposition \ref{Prop_wheel} for $k=6$ (three inserted vertices on each
side of the diagonal of every unit square; degrees $5$ and $12$) and for $k=7$ (three on each side in the even rows,
four in the odd ones; degrees $5$ and $13$). The hubs are the grid vertices; interior vertices are labelled with their
degrees.}%
\label{Fig_wheel}%
\end{figure}

\subsection{Periodic triangulations found by computer}
\label{Sec_computer}

For the remaining pairs, namely $(3,k)$ with $k\in \{1,3,4,6,7,8\}$ and the pair $(4,2)$, we found periodic
triangulations by a search over triangulations of the torus. A
triangulation of the torus with $N$ vertices is given by its rotation system with \emph{translation labels}: the
vertices are $0,\ldots ,N-1$, and for every vertex $u$ we list its neighbours in cyclic order, each neighbour $v$
carrying a label $(p,q)\in \mathbb{Z}^{2}$ which says that the copy of $u$ in the fundamental domain is joined to the
copy of $v$ translated by $p$ times the first and $q$ times the second period vector (the label $(0,0)$ is omitted).
The conditions that make such data legitimate are the following, and all of them can be checked by
hand: (i) the label of an edge seen from its two ends is
opposite ($v$ carries $(p,q)$ in the list of $u$ if and only if $u$ carries $(-p,-q)$ in the list of $v$); (ii) tracing
the faces (from the edge $uv$ go to the neighbour of $v$ following $u$ in the rotation of $v$, and so on) produces $2N$
triangles, around each of which the labels sum to $(0,0)$, so that with $3N$ edges the surface is closed, orientable
(the rotations are consistent) and of Euler characteristic $0$, i.e.\ a torus, and the labels are the coordinates of
the deck transformations; (iii) the labels generate $\mathbb{Z}^{2}$, so that the lift is connected; (iv) no list
contains its own vertex with the label $(0,0)$ and no list contains the same labelled neighbour twice, so that the lift
is a simple graph; (v) the degrees are the lengths of the lists. Then the data define a periodic triangulation of the
plane with the given vertices as fundamental domain, and Lemma \ref{Lemma_periodic} applies. The search started
from the $a\times b$ triangular-lattice torus and performed random diagonal flips until the degree multiset was
the extremal one; the labels were then normalised so that a spanning tree of the quotient carries the label
$(0,0)$ and the basis of the period lattice was changed so that all labels lie in $\{-1,0,1\}^{2}$. Table
\ref{Tab_computer} records what was found and also checked by computer; a drawing of a disk of each
periodic triangulation, and the programs that check the systems, are in the supplementary material.

That the five conditions suffice is the content of the next lemma, and the seven pairs are then settled by
exhibiting seven systems that satisfy them.

\begin{lemma}
\label{Lemma_torus}Let $N\geq 1$ and let each of the vertices $0,\ldots ,N-1$ carry a cyclic list of labelled
neighbours $v_{(p,q)}$ with $v\in \{0,\ldots ,N-1\}$ and $(p,q)\in \mathbb{Z}^{2}$, and suppose that
\textup{(i)}--\textup{(v)} above hold. Then these data define a simple periodic triangulation $T$ of the
plane, invariant under a lattice $\Lambda $ of rank two with $N$ vertex orbits, in which the vertex of the
orbit of $u$ has degree the length of the list of $u$.
\end{lemma}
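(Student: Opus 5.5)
The plan is to build the surface from the data, lift it to its universal cover, and identify that cover with the plane carrying a lattice action.

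\emph{The quotient surface.} Forget the labels first. The cyclic lists are a rotation system on a multigraph $Q$ with $N$ vertices, whose edges are the unordered pairs $\{u_{(p,q)}\text{ in the list of }v,\ v_{(-p,-q)}\text{ in the list of }u\}$; condition (i) guarantees that this pairing of list entries is a fixed-point-free involution. By the standard theory of rotation systems (Heffter--Edmonds), $Q$ is cellularly embedded in a closed orientable surface $S$, and the faces are exactly the orbits of the face-tracing rule of (ii). By (ii) every face is a triangle, there are $2N$ of them, and there are $3N$ edges, so $\chi(S)=N-3N+2N=0$ and $S$ is a torus. I will also check that $Q$ is connected, which is needed for $S$ to be one surface. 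Since the faces triangulate $S$, connectivity of the face structure follows from connectivity of $Q$. If $Q$ had several components, each would be a closed orientable surface of Euler characteristic $\le 2$ with total $0$. I would rather derive connectedness from (iii): the labels of edges inside one component form a subgroup, and if they generate $\mathbb{Z}^2$ the lift is connected. I think it is cleanest to state that the construction below is applied component by component and that (iii) forces a single one.

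\emph{The lift.} Define the labelled cover $\widetilde{T}$ with vertex set $\{0,\ldots,N-1\}\times\mathbb{Z}^2$. Join $(u,z)$ to $(v,z+(p,q))$ whenever $v_{(p,q)}$ occurs in the list of $u$; this is well defined by (i). Take the triangles to be the lifts of the faces of $Q$, each face $uvw$ lifting to $(u,z),(v,z+a),(w,z+a+b)$. The lift closes up because, by (ii), the labels around every face sum to $(0,0)$. Then $\mathbb{Z}^2$ acts freely by translation in the second coordinate, with quotient $Q$. The labels define a homomorphism $\phi:\pi_1(S)\to\mathbb{Z}^2$: a closed walk is sent to the sum of its labels, and this is well defined on homotopy classes because face boundaries sum to zero. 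The complex $\widetilde{T}$ is the covering of $S$ associated with $\ker\phi$, and its components correspond to the cosets of the image $\phi(\pi_1(S))$. By (iii) this image is all of $\mathbb{Z}^2$, so $\widetilde{T}$ is connected. Since $\pi_1(S)\cong\mathbb{Z}^2$, a surjection onto $\mathbb{Z}^2$ is an isomorphism, so $\ker\phi$ is trivial and $\widetilde{T}$ is the universal cover, homeomorphic to $\mathbb{R}^2$. This is the step I expect to be the main obstacle. The issue is justifying the implication from ``labels generate'' to surjectivity of $\phi$: the generators must be the labels of closed walks, not of single edges. The spanning-tree normalisation helps here. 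After relabelling along a spanning tree, which changes nothing, every non-tree edge's label is the $\phi$-value of its fundamental cycle. So generation by the labels is generation by $\phi$ of the fundamental cycles, and hence $\phi$ is surjective.

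\emph{Simplicity, local finiteness, degrees.} A loop in $\widetilde{T}$ would come from an entry $u_{(0,0)}$ in the list of $u$. A double edge would come from a repeated labelled entry. Both are excluded by (iv), so $\widetilde{T}$ is a simple graph. Because the covering map is a local homeomorphism, the rotation at $(u,z)$ is a copy of the list of $u$. The link of every vertex is therefore a cycle, which together with simplicity makes $\widetilde{T}$ a triangulation of the plane. The same local-homeomorphism argument shows it is locally finite, with degree equal to the list length, which is condition (v). Finally, $\mathbb{Z}^2$ acting by deck transformations on $\widetilde{T}\cong\mathbb{R}^2$ can be realised as a lattice $\Lambda$ of translations. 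To do this, choose a $\Lambda$-equivariant straight-line embedding: for instance, take a geometric flat torus structure on $S$ and lift it. By construction the lattice has exactly $N$ vertex orbits, namely the sets $\{u\}\times\mathbb{Z}^2$.
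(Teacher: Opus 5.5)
Your route is the paper's: the lift on $\{0,\ldots ,N-1\}\times \mathbb{Z}^{2}$, a torus by the Euler characteristic, the lift identified with the universal cover, simplicity from (iv) and degrees from (v). Your homomorphism $\phi\colon \pi_{1}(S)\to \mathbb{Z}^{2}$, together with the fact that a surjection $\mathbb{Z}^{2}\to \mathbb{Z}^{2}$ is injective, is more careful than the paper, which only asserts that the deck group is the group generated by the labels.

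But the step you single out as the obstacle is not closed. Relabelling along a spanning tree, $\ell (u\to v)\mapsto \ell (u\to v)+g(v)-g(u)$, preserves (i), (ii), (iv), (v), the lift up to isomorphism, and $\phi$. It does \emph{not} preserve the subgroup generated by the edge labels. So (iii) for the labels you are given says nothing about the normalised labels. The sentence ``generation by the labels is generation by $\phi$ of the fundamental cycles'' is true only for data that were normalised to begin with. In the same way, nothing in (i)--(v) makes $Q$ connected, so ``(iii) forces a single one'' is unsupported.

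Neither claim can be proved, because both fail. Give the seven-vertex triangulation of the torus by $K_{7}$ the labels $\ell (u\to v)=g(v)-g(u)$, with $g(1)=(1,0)$, $g(2)=(0,1)$ and $g=0$ elsewhere. Then (i) and (v) are immediate. (ii) holds because the faces are its fourteen triangles and such labels sum to zero around every closed walk. (iv) holds because $K_{7}$ is simple, and (iii) holds because $(1,0)$ and $(0,1)$ occur. Yet $\phi \equiv 0$, and $(u,z)\mapsto (u,z-g(u))$ identifies your labelled lift with $\mathbb{Z}^{2}$ disjoint copies of $K_{7}$. Likewise, two disjoint copies of the printed system for $(3,1)$ satisfy (i)--(v) with $N=8$ and lift to two planes with four orbits each.

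So one must assume that $Q$ is connected and that the label sums of closed walks generate $\mathbb{Z}^{2}$. Equivalently, the labels generate $\mathbb{Z}^{2}$ \emph{after} normalisation along a spanning tree. This is the form in which the paper's systems are printed, and the reading its proof tacitly uses. With that hypothesis your argument is complete as written.

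Alternatively, for connected $Q$ you can drop (iii) altogether by taking $T$ to be the universal cover of $S$ instead of the labelled lift. A loop or two parallel edges in that cover project to a null-homotopic closed walk of length one or two. Since $\phi$ vanishes on such walks, the corresponding list entries would violate (iv). In that version (iii) is needed only to make the printed labels the coordinates of the deck transformations.

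Two small points. Pairing the list entries into a fixed-point-free involution uses (iv) as well as (i). And lifting a flat structure on $S$ gives a translation-equivariant embedding with curved edges, not a straight-line one; curved edges are all the lemma needs.
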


\begin{proof}
Let $T$ be the graph on $\{0,\ldots ,N-1\}\times \mathbb{Z}^{2}$ in which $u_{(a,b)}$ is joined to
$v_{(a+p,b+q)}$ for every entry $v_{(p,q)}$ of the list of $u$, and give $u_{(a,b)}$ the cyclic order its
list prescribes. Condition (i) makes the adjacency symmetric, so $T$ is a graph and the cyclic orders make it
a map; (iv) says that no vertex is joined to itself and no pair is joined twice, so $T$ is simple. The group
$\mathbb{Z}^{2}$ acts on $T$ by $u_{(a,b)}\mapsto u_{(a+a',b+b')}$, freely and by automorphisms of the map,
with exactly $N$ orbits of vertices, and the quotient map has $N$ vertices and, by (v), the prescribed
degrees, hence $3N$ edges.

Trace the faces of the quotient. By (ii) this produces $2N$ triangles, so the quotient is a closed surface
with $N-3N+2N=0$ as Euler characteristic, orientable because the rotations are consistent: a torus. Its
universal cover is the plane, and $T$ is the lift of the quotient map to that cover, hence a triangulation of
the plane; the deck group is the group generated by the labels, which by (iii) is all of $\mathbb{Z}^{2}$, so
$T$ is invariant under a lattice $\Lambda $ of rank two. The condition in (ii) that the labels sum to $(0,0)$
around every face is what makes each traced face close up in the lift rather than run off to infinity, so the
faces of $T$ are triangles. Finally a fundamental domain of $\Lambda $ meets each of the $N$ orbits once, so
the degrees of $T$ take the values of (v) with the stated multiplicities.
\end{proof}

\begin{proposition}
\label{Prop_computer}For each of the seven pairs $(\delta ,k)$ of Table \ref{Tab_computer} there is a periodic
triangulation $T_{\delta ,k}$ of the plane with minimum degree $\delta $ and $w_{k}(T_{\delta ,k})=c_{\delta
,k}$. Consequently, by Lemma \ref{Lemma_periodic},
\[
\mathrm{MP}(n,\delta ,k)=c_{\delta ,k}\,n+O(\sqrt{n})
\]
for every sufficiently large $n$, and in particular $\lim_{n\to \infty }\mathrm{MP}(n,\delta ,k)/n=c_{\delta
,k}$.
\end{proposition}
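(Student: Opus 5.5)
The proof is by exhibiting, for each of the seven pairs $(3,1),(3,3),(3,4),(3,6),(3,7),(3,8)$ and $(4,2)$, a rotation system with translation labels (the entries of Table \ref{Tab_computer}). We then verify that it satisfies conditions (i)--(v) of Section \ref{Sec_computer} and that its degree multiset carries the extremal densities of the pair. Lemma \ref{Lemma_torus} turns such a system into a simple periodic triangulation $T_{\delta ,k}$, and Lemma \ref{Lemma_periodic} finishes.

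First we list the extremal densities to be realised, read off from the equality sequences of Theorem \ref{Tm_planar34}:
\begin{itemize}
\item[] $(3,1)$: degrees $3,5,7,9$ with densities $\frac14,\frac14,\frac14,\frac14$;
\item[] $(3,3)$: degrees $3,7,11$ with densities $\frac5{12},\frac5{12},\frac16$;
\item[] $(3,4)$: degrees $3,8,13$ with densities $\frac7{15},\frac7{15},\frac1{15}$;
\item[] $(3,k)$, $k=6,7,8$: degrees $3$ and $k+4$ with densities $\frac{k-2}{k+1}$ and $\frac3{k+1}$;
\item[] $(4,2)$: degrees $4,7,10$ with densities $\frac49,\frac49,\frac19$.
\end{itemize}
In each case one checks directly that $\sum (6-\deg)\cdot\text{density}=0$. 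This confirms that a torus triangulation with exactly these degrees is not excluded by Euler's formula, and it fixes the smallest admissible $N$: $N$ must be a multiple of $4,12,15,7,8,9,9$ respectively. Moreover $w_{k}(T)=c_{\delta ,k}$ will follow automatically. The occurring degrees are spaced by $h=k+1$, so a window of $k+1$ consecutive degrees meets at most one of them, and the top density $1-tc_{\delta ,k}$ never exceeds $c_{\delta ,k}$ by Proposition \ref{Prop_equality}(ii).

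For each system the verification is mechanical and follows the five conditions in order. For (i) we check antisymmetry of labels entry by entry. For (ii) we trace the faces, count $2N$ triangles, and check that the labels sum to $(0,0)$ around each of them. For (iii) we exhibit two labels spanning $\mathbb{Z}^{2}$; after the normalisation described above, $(1,0)$ and $(0,1)$ should both occur. For (iv) we check that no list repeats a labelled neighbour or contains its own vertex with label $(0,0)$. For (v) we read the list lengths and compare the resulting multiset with the target densities. The minimum degree is then the smallest extremal degree, namely $\delta$, as Lemma \ref{Lemma_periodic} requires.

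The main obstacle is existence, not verification. No uniform hand construction has been given for these pairs, and the certificates come from a random flip search starting from a lattice torus. Where possible, I would first attempt hand constructions in the style of Proposition \ref{Prop_hand}. Stacking vertices of degree $3$ into a suitable periodic set of faces of a grid-like triangulation should handle $(3,k)$ for $k=6,7,8$. In particular, since stacking raises three corners by one each, a degree-$(k+4)$ host of density $\frac3{k+1}$ needs $k-2$ stacked vertices per host vertex. For $(3,8)$ this meets the bound of Lemma \ref{Lemma_deg3count} with equality, so every face of the host must receive a stacked vertex. The case $(3,8)$ is therefore the tightest, and there I would take the host to be a periodic triangulation with all degrees $12-6=6$ after removing the stacks, namely the triangular lattice with every face stacked. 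This gives degree $6+6=12=k+4$ and density of degree $3$ equal to $2/3$, so it works by hand. For the remaining pairs I would rely on the computer certificates, with the five conditions checked as above.
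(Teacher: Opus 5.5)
Your proposal follows the paper's proof: verify (i)--(v) for the printed rotation systems, invoke Lemma \ref{Lemma_torus}, obtain $w_{k}=c_{\delta ,k}$ from the spacing $k+1$ of the degrees together with $1-tc_{\delta ,k}\leq c_{\delta ,k}$, and conclude with Lemma \ref{Lemma_periodic}. Your hand construction for $(3,8)$ is correct and is the $S_{8}$ of Proposition \ref{Prop_deg3}, and your stacking hint for $(3,6)$ and $(3,7)$ also works: stack every upward face and the downward faces $D(x,y)$ with $x-y\equiv 0$, respectively $x-y\not\equiv 0\pmod 3$, which gives domains $3^{4}10^{3}$ and $3^{5}11^{3}$. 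One slip: for $(3,8)$ the densities are $\frac{2}{3},\frac{1}{3}$, so $N$ need only be a multiple of $3$, not $9$, as your own stacked lattice with its three-vertex domain shows.
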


\begin{proof}
The seven rotation systems are printed below, one for each pair, in the notation fixed above. Each of them
satisfies (i)--(v), which is a finite check: (i) and (iv) are read off the lists, (ii) is the face tracing,
(iii) asks that the labels generate $\mathbb{Z}^{2}$, and (v) is the list lengths. By Lemma
\ref{Lemma_torus} each therefore defines a simple periodic triangulation of the plane with the stated
fundamental domain. In each case the degree multiset of the domain is as recorded, its smallest entry is
$\delta $, and the degrees occurring are so far apart that no window of $k+1$ consecutive integers meets two
of them; hence $w_{k}$ is the largest density of a single degree, which is $c_{\delta ,k}$. Lemma
\ref{Lemma_periodic} then gives the displayed equality.
\end{proof}


\noindent{\boldmath\textbf{$(\delta ,k)=(3,1)$, $N=4$.}}\quad {\small\raggedright $0$:~$1$ $1_{(1,1)}$ $3$; $1$:~$1_{(-1,-1)}$ $2$ $2_{(1,1)}$ $1_{(1,1)}$ $0$ $3$ $2_{(0,-1)}$ $3_{(-1,-1)}$ $0_{(-1,-1)}$; $2$:~$1_{(-1,-1)}$ $2_{(-1,-1)}$ $3_{(-1,0)}$ $1_{(0,1)}$ $3_{(0,1)}$ $2_{(1,1)}$ $1$; $3$:~$1$ $0$ $1_{(1,1)}$ $2_{(1,0)}$ $2_{(0,-1)}$.\par}
\noindent The lengths of the lists are the degrees, giving the multiset $3^{1}5^{1}7^{1}9^{1}$; the smallest is $3$. Two of the
degrees occurring differ by at least $2>k$, so a window of $k+1$ consecutive integers meets only one of
them, and $w_{k}=\tfrac{1}{4}=c_{3,1}$.

\noindent{\boldmath\textbf{$(\delta ,k)=(3,3)$, $N=12$.}}\quad {\small\raggedright $0$:~$6$ $6_{(0,1)}$ $11$; $1$:~$4$ $2$ $9_{(0,-1)}$ $5_{(0,-1)}$ $3$ $7$ $9$; $2$:~$1$ $4$ $9_{(0,-1)}$; $3$:~$9$ $7$ $1$ $5_{(0,-1)}$ $9_{(0,-1)}$ $10_{(1,0)}$ $10_{(1,1)}$; $4$:~$9_{(0,-1)}$ $2$ $1$ $9$ $6$ $8$ $6_{(0,-1)}$; $5$:~$3_{(0,1)}$ $1_{(0,1)}$ $9$; $6$:~$0_{(0,-1)}$ $6_{(0,-1)}$ $8$ $4$ $9$ $4_{(0,1)}$ $8_{(0,1)}$ $6_{(0,1)}$ $0$ $11$ $11_{(0,-1)}$; $7$:~$9$ $1$ $3$; $8$:~$6$ $6_{(0,-1)}$ $4$; $9$:~$4_{(0,1)}$ $6$ $4$ $1$ $7$ $3$ $10_{(1,1)}$ $3_{(0,1)}$ $5$ $1_{(0,1)}$ $2_{(0,1)}$; $10$:~$10_{(0,-1)}$ $11$ $11_{(0,1)}$ $10_{(0,1)}$ $3_{(-1,0)}$ $9_{(-1,-1)}$ $3_{(-1,-1)}$; $11$:~$11_{(0,1)}$ $10$ $10_{(0,-1)}$ $11_{(0,-1)}$ $6$ $0$ $6_{(0,1)}$.\par}
\noindent The lengths of the lists are the degrees, giving the multiset $3^{5}7^{5}11^{2}$; the smallest is $3$. Two of the
degrees occurring differ by at least $4>k$, so a window of $k+1$ consecutive integers meets only one of
them, and $w_{k}=\tfrac{5}{12}=c_{3,3}$.

\noindent{\boldmath\textbf{$(\delta ,k)=(3,4)$, $N=15$.}}\quad {\small\raggedright $0$:~$3$ $7$ $4$ $13$ $2$ $9$ $12$ $11$; $1$:~$14$ $4$ $7$ $7_{(0,-1)}$ $11$ $12$ $5$ $6$; $2$:~$12$ $9$ $0$ $13$ $4$ $8$ $6$ $10$; $3$:~$7$ $0$ $11$; $4$:~$7$ $1$ $14$ $6$ $8$ $2$ $13$ $0$; $5$:~$12$ $6$ $1$; $6$:~$8$ $4$ $14$ $1$ $5$ $12$ $10$ $2$; $7$:~$7_{(1,0)}$ $11_{(1,0)}$ $7_{(0,-1)}$ $1$ $4$ $0$ $3$ $11$ $7_{(-1,0)}$ $11_{(0,1)}$ $1_{(0,1)}$ $7_{(0,1)}$ $11_{(1,1)}$; $8$:~$2$ $4$ $6$; $9$:~$2$ $12$ $0$; $10$:~$12$ $2$ $6$; $11$:~$7_{(0,-1)}$ $7_{(-1,-1)}$ $7_{(-1,0)}$ $7$ $3$ $0$ $12$ $1$; $12$:~$2$ $10$ $6$ $5$ $1$ $11$ $0$ $9$; $13$:~$0$ $4$ $2$; $14$:~$1$ $6$ $4$.\par}
\noindent The lengths of the lists are the degrees, giving the multiset $3^{7}8^{7}13^{1}$; the smallest is $3$. Two of the
degrees occurring differ by at least $5>k$, so a window of $k+1$ consecutive integers meets only one of
them, and $w_{k}=\tfrac{7}{15}=c_{3,4}$.

\noindent{\boldmath\textbf{$(\delta ,k)=(3,6)$, $N=7$.}}\quad {\small\raggedright $0$:~$5$ $1$ $2$; $1$:~$2$ $0$ $5$ $3$ $2_{(-1,-1)}$ $4_{(-1,0)}$ $5_{(-1,0)}$ $6_{(-1,0)}$ $2_{(-1,0)}$ $5_{(0,1)}$; $2$:~$3_{(1,1)}$ $5_{(1,1)}$ $1_{(1,0)}$ $6$ $5$ $0$ $1$ $5_{(0,1)}$ $4_{(0,1)}$ $1_{(1,1)}$; $3$:~$2_{(-1,-1)}$ $1$ $5$; $4$:~$1_{(1,0)}$ $2_{(0,-1)}$ $5$; $5$:~$2_{(0,-1)}$ $1_{(0,-1)}$ $2_{(-1,-1)}$ $3$ $1$ $0$ $2$ $6$ $1_{(1,0)}$ $4$; $6$:~$2$ $1_{(1,0)}$ $5$.\par}
\noindent The lengths of the lists are the degrees, giving the multiset $3^{4}10^{3}$; the smallest is $3$. Two of the
degrees occurring differ by at least $7>k$, so a window of $k+1$ consecutive integers meets only one of
them, and $w_{k}=\tfrac{4}{7}=c_{3,6}$.

\noindent{\boldmath\textbf{$(\delta ,k)=(3,7)$, $N=8$.}}\quad {\small\raggedright $0$:~$4$ $1$ $6$; $1$:~$6_{(0,1)}$ $2_{(1,1)}$ $4_{(1,1)}$ $3_{(1,1)}$ $6_{(1,1)}$ $5_{(1,0)}$ $4_{(1,0)}$ $7_{(1,0)}$ $6$ $0$ $4$; $2$:~$6_{(-1,0)}$ $4$ $1_{(-1,-1)}$; $3$:~$4$ $6$ $1_{(-1,-1)}$; $4$:~$3$ $1_{(-1,-1)}$ $2$ $6_{(-1,0)}$ $7$ $1_{(-1,0)}$ $5$ $6_{(0,1)}$ $1$ $0$ $6$; $5$:~$1_{(-1,0)}$ $6_{(0,1)}$ $4$; $6$:~$1$ $7_{(1,0)}$ $4_{(1,0)}$ $2_{(1,0)}$ $1_{(0,-1)}$ $4_{(0,-1)}$ $5_{(0,-1)}$ $1_{(-1,-1)}$ $3$ $4$ $0$; $7$:~$6_{(-1,0)}$ $1_{(-1,0)}$ $4$.\par}
\noindent The lengths of the lists are the degrees, giving the multiset $3^{5}11^{3}$; the smallest is $3$. Two of the
degrees occurring differ by at least $8>k$, so a window of $k+1$ consecutive integers meets only one of
them, and $w_{k}=\tfrac{5}{8}=c_{3,7}$.

\noindent{\boldmath\textbf{$(\delta ,k)=(3,8)$, $N=9$.}}\quad {\small\raggedright $0$:~$6$ $5$ $8$; $1$:~$5_{(1,0)}$ $8_{(1,1)}$ $6$; $2$:~$8$ $5_{(1,0)}$ $6$; $3$:~$5_{(1,1)}$ $6$ $8_{(1,1)}$; $4$:~$8_{(0,1)}$ $6$ $5_{(1,1)}$; $5$:~$1_{(-1,0)}$ $6_{(-1,0)}$ $2_{(-1,0)}$ $8_{(-1,0)}$ $4_{(-1,-1)}$ $6_{(-1,-1)}$ $3_{(-1,-1)}$ $8$ $0$ $6$ $7$ $8_{(0,1)}$; $6$:~$8_{(1,1)}$ $3$ $5_{(1,1)}$ $4$ $8_{(0,1)}$ $7$ $5$ $0$ $8$ $2$ $5_{(1,0)}$ $1$; $7$:~$6$ $8_{(0,1)}$ $5$; $8$:~$2$ $6$ $0$ $5$ $3_{(-1,-1)}$ $6_{(-1,-1)}$ $1_{(-1,-1)}$ $5_{(0,-1)}$ $7_{(0,-1)}$ $6_{(0,-1)}$ $4_{(0,-1)}$ $5_{(1,0)}$.\par}
\noindent The lengths of the lists are the degrees, giving the multiset $3^{6}12^{3}$; the smallest is $3$. Two of the
degrees occurring differ by at least $9>k$, so a window of $k+1$ consecutive integers meets only one of
them, and $w_{k}=\tfrac{2}{3}=c_{3,8}$.

\noindent{\boldmath\textbf{$(\delta ,k)=(4,2)$, $N=9$.}}\quad {\small\raggedright $0$:~$6$ $7$ $2$ $8$ $5$ $4$ $1$ $6_{(-1,-1)}$ $3$ $2_{(-1,0)}$; $1$:~$7_{(-1,-1)}$ $6_{(-1,-1)}$ $0$ $4$; $2$:~$3_{(1,0)}$ $8$ $0$ $7$ $8_{(1,1)}$ $6_{(1,0)}$ $0_{(1,0)}$; $3$:~$2_{(-1,0)}$ $0$ $6_{(-1,-1)}$ $8_{(-1,0)}$; $4$:~$1$ $0$ $5$ $7_{(-1,-1)}$; $5$:~$0$ $8$ $7_{(-1,-1)}$ $4$; $6$:~$7$ $0$ $2_{(-1,0)}$ $8_{(0,1)}$ $3_{(1,1)}$ $0_{(1,1)}$ $1_{(1,1)}$; $7$:~$0$ $6$ $1_{(1,1)}$ $4_{(1,1)}$ $5_{(1,1)}$ $8_{(1,1)}$ $2$; $8$:~$0$ $2$ $3_{(1,0)}$ $6_{(0,-1)}$ $2_{(-1,-1)}$ $7_{(-1,-1)}$ $5$.\par}
\noindent The lengths of the lists are the degrees, giving the multiset $4^{4}7^{4}10^{1}$; the smallest is $4$. Two of the
degrees occurring differ by at least $3>k$, so a window of $k+1$ consecutive integers meets only one of
them, and $w_{k}=\tfrac{4}{9}=c_{4,2}$.

Two things make these pairs harder than those of Sections \ref{Sec_chain} and \ref{Sec_wheel}. The
first is that for four of the seven the equality sequence has more than two degrees, four of them for
$(3,1)$ and three for $(3,3)$, $(3,4)$ and $(4,2)$, so that the densities are not settled by the
identity $\sum_{v}(6-\deg (v))=0$ alone and a pattern has to hit all of them at once. The second,
for $\delta =3$, is that a vertex of degree $3$ is stacked into a face and raises three other degrees
as it is put in, which couples the densities tightly. The patterns that result have no visible
regularity of the kind exploited above, and we found the domains by search rather than by design. A
row of Table
\ref{Tab_computer} is read as follows: $N$ is the number of vertices of the fundamental domain, so
that a domain carries $N$ vertices, $3N$ edges and $2N$ faces; the listed degrees are those of the
equality sequence of the pair, and their multiplicities are determined by $N$ and the extremal
densities, which for $c=c_{\delta ,k}$ are $c$ for each degree but the last and $1-tc$ for the last.

\begin{example}
\label{Ex_T31}The smallest of the seven domains is the one for $(\delta ,k)=(3,1)$, with $N=4$
vertices and one vertex of each of the degrees $3,5,7,9$. In the notation of this section it is
\[
\begin{array}{l}
0:\ 1\ 1_{(1,1)}\ 3;\\
1:\ 1_{(-1,-1)}\ 2\ 2_{(1,1)}\ 1_{(1,1)}\ 0\ 3\ 2_{(0,-1)}\ 3_{(-1,-1)}\ 0_{(-1,-1)};\\
2:\ 1_{(-1,-1)}\ 2_{(-1,-1)}\ 3_{(-1,0)}\ 1_{(0,1)}\ 3_{(0,1)}\ 2_{(1,1)}\ 1;\\
3:\ 1\ 0\ 1_{(1,1)}\ 2_{(1,0)}\ 2_{(0,-1)}.
\end{array}
\]
The degrees are the lengths of the four lists, namely $3,9,7,5$, so the multiset of the domain is
$3^{1}5^{1}7^{1}9^{1}$ and $\sum_{v}(6-\deg (v))=3+(-3)+(-1)+1=0$. Each density is $1/4$, and since
the four degrees are spaced by $2=k+1$ a window of two consecutive degrees contains exactly one of
them, so $w_{1}=1/4=c_{3,1}$. Tracing the faces gives $2N=8$ triangles, against $3N=12$ edges, so
$V-E+F=4-12+8=0$. Two features of the notation are visible here. The vertex $1$ occurs in its own
list, with the labels $(-1,-1)$ and $(1,1)$: the quotient has a loop at $1$, and only the labels
record that its two ends are different vertices of the plane triangulation. And the vertex $0$, of
degree $3$, has the neighbours $1$, the copy of $1$ translated by $e_{1}+e_{2}$, and $3$; in the
plane it is a stacked vertex, sitting inside the triangle formed by those three. The domain for
$(3,6)$ is read in the same way: $N=7$ vertices, four of degree $3$ and three of degree $10$, with
$4\cdot 3+3\cdot (-4)=0$, densities $4/7$ and $3/7$, and, since $10-3=7>6=k$, the window density
$w_{6}=4/7=c_{3,6}$.
\end{example}

Figure \ref{Fig_T31} draws a disk of $T_{3,1}$ with one fundamental domain shaded and the
two periods marked, which is what the five printed conditions amount to.

\begin{figure}[htbp]
\begin{center}
\includegraphics[width=0.55\textwidth]{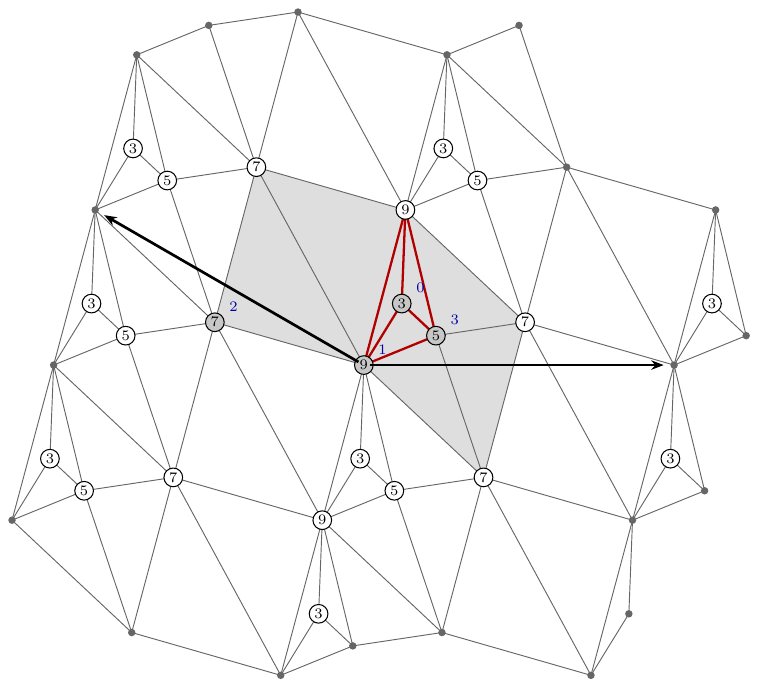}
\end{center}
\caption{A disk of the periodic triangulation $T_{3,1}$ of Example \ref{Ex_T31}, with one fundamental domain
shaded and the two periods drawn as arrows from the vertex $1$. The four vertices $0,1,2,3$ of the domain have
degrees $3,9,7,5$; the vertex $0$ of degree three is stacked into the triangle drawn in red, and the two edges
from $1$ to copies of $1$ --- the loop of the quotient --- leave it in opposite directions.}%
\label{Fig_T31}%
\end{figure}

\begin{table}[htbp]
\begin{center}
\renewcommand{\arraystretch}{1.35}%
\begin{tabular}{|c|c|c|l|}
\hline
$(\delta ,k)$ & $c_{\delta ,k}$ & $N$ & degrees of the equality sequence\\[2pt] \hline
$(3,1)$ & $\tfrac{1}{4}$ & $4$ & $3,\ 5,\ 7,\ 9$\\
$(3,3)$ & $\tfrac{5}{12}$ & $12$ & $3,\ 7,\ 11$\\
$(3,4)$ & $\tfrac{7}{15}$ & $15$ & $3,\ 8,\ 13$\\
$(3,6)$ & $\tfrac{4}{7}$ & $7$ & $3,\ 10$\\
$(3,7)$ & $\tfrac{5}{8}$ & $8$ & $3,\ 11$\\
$(3,8)$ & $\tfrac{2}{3}$ & $9$ & $3,\ 12$\\
$(4,2)$ & $\tfrac{4}{9}$ & $9$ & $4,\ 7,\ 10$\\
\hline
\end{tabular}
\end{center}
\caption{The seven pairs for which a periodic triangulation with the extremal densities was found by computer:
the coefficient $c_{\delta ,k}$ of the maximised bound, the number $N$ of vertices of a fundamental domain, and
the degrees occurring. By Lemma \ref{Lemma_periodic} each of them gives
$\mathrm{MP}(n,\delta ,k)=c_{\delta ,k}\,n+O(\sqrt{n})$.}%
\label{Tab_computer}%
\end{table}

The row $\delta =3$ of our table stops at $k=8$, and not because the search failed: for $k\geq 9$ the extremal
densities cannot be realised at all, as the next section shows.

\subsection{Minimum degree three: the coefficient for $k\geq 9$}
\label{Sec_deg3exact}

For a maximal planar graph $G$ write
\[
X(G)=\sum \{\deg (v)-6:\deg (v)\geq 7\}=\sum_{v\in V(G)}\bigl(\deg (v)-6\bigr)^{+}
\]
for its total excess above the average degree six. Euler's formula says $\sum_{v}(6-\deg (v))=12$, so
\begin{equation}
X(G)=3n_{3}+2n_{4}+n_{5}-12,\label{eq_excess}
\end{equation}
and $X$ measures at the same time how much degree is piled up on the large vertices and how many small vertices
there are. It cannot be large.

Before the proof, a word on why the total excess is the quantity to look at. Suppose
$\delta (G)=3$ and let $k$ be large. The window at $p=3$ is $\{3,4,\ldots ,k+3\}$, and since no
degree is smaller than three it misses only the vertices of degree at least $k+4$, so that
\[
\mathrm{sp}(G,k)\ \geq \ n-\left\vert \{v:\deg (v)\geq k+4\}\right\vert ;
\]
for large $k$ the whole question is how many vertices can escape this single window. Escaping is
expensive: a vertex of degree at least $k+4$ carries an excess of at least $(k+4)-6=k-2$, and it
must pay it out of $X(G)$, which all the vertices of the maximal planar graph share. That $X(G)$ is a
bounded resource at all is a fact about planarity and not about the degree sum: the equality
sequence $3^{s}(k+4)^{n-s}$ of Theorem \ref{Tm_planar34}(i), with $s=((k-2)n+12)/(k+1)$, has the
degree sum $6n-12$ of a maximal planar graph, and its excess $3s-12$ equals $(21n-84)/10$ for $k=9$ and so
exceeds $2n-14$ at every order.
What does bound the excess is that the faces at a vertex are pairwise distinct and that there are
only $2n-4$ faces altogether: a vertex of degree $d$ occupies $d$ of them, two vertices of large
degree can occupy one face jointly only across an edge joining them, and three only across a face
all of whose corners are large. The proof below is this accounting and nothing else, and it also
explains why three large vertices are worse than two: for $|S|\geq 3$ planarity limits the edges
inside $S$ to $3|S|-6$, so that the term $2e-6|S|$ in the count below is at most $-12$, whereas
two adjacent large vertices give $2e-6|S|=-10$.

Read through \eqref{eq_excess}, the same inequality says that a maximal planar graph cannot have too many
small vertices, $3n_{3}+2n_{4}+n_{5}\leq 2n-2$. For $n_{3}$ alone this gives $n_{3}\leq (2n-2)/3$, which
is weaker than the bound $n_{3}\leq (2n-4)/3$ of Lemma \ref{Lemma_deg3count}; the sharper bound does not
improve the inequality above, however, because that inequality constrains the combination
$3n_{3}+2n_{4}+n_{5}$ and not $n_{3}$ by itself --- the graph that makes it sharp has only two vertices of
degree three. What the inequality is, and what Lemma \ref{Lemma_deg3count} is not, is the
constraint which the window count of Section \ref{Sec_planar34} cannot see, since it uses the degree
sum only.


\begin{lemma}
\label{Lemma_excess}Every maximal planar graph $G$ with
$n\geq 7$ vertices satisfies
\[
X(G)\ \leq \ 2n-14,\qquad \text{equivalently}\qquad 3n_{3}+2n_{4}+n_{5}\ \leq \ 2n-2 ,
\]
and for every $n\geq 7$ some maximal planar graph of order $n$ attains it. If at least three vertices of $G$ have degree
at least seven, then $X(G)\leq 2n-16$.
\end{lemma}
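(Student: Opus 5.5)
The plan is to bound $X(G)$ by counting incidences between faces and large vertices. Let $S=\{v:\deg(v)\geq 7\}$, so that $X(G)=\sum_{v\in S}\deg(v)-6|S|$. The equivalent form $3n_{3}+2n_{4}+n_{5}\leq 2n-2$ then follows at once from \eqref{eq_excess}, so only the inequality for $X$ needs proof. I will first dispose of small $|S|$ directly, and then use the faces for $|S|\geq 3$.

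\emph{Small $S$.} If $S=\emptyset$ then $X=0\leq 2n-14$ because $n\geq 7$. If $|S|=1$, the single large vertex has degree at most $n-1$, so $X\leq n-7\leq 2n-14$. If $|S|=2$, then $X\leq 2(n-1)-12=2n-14$.

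\emph{The face count.} For $i\leq 3$, let $F_{i}$ be the number of faces having exactly $i$ corners in $S$. In a triangulation a vertex of degree $d$ lies in exactly $d$ faces, so
\[
\sum_{v\in S}\deg(v)=F_{1}+2F_{2}+3F_{3}\leq (2n-4)+F_{2}+2F_{3}.
\]
Let $e$ be the number of edges of $G[S]$. Each edge lies in exactly two faces. A face with two corners in $S$ contains one edge of $G[S]$, and a face with three contains three, so $2e=F_{2}+3F_{3}\geq F_{2}+2F_{3}$. Combining the two displays gives
\[
X(G)\leq 2n-4+2e-6|S|.
\]
When $|S|\geq 3$, the graph $G[S]$ is a simple planar graph, so $e\leq 3|S|-6$. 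Hence $X\leq 2n-16$, which is also the final assertion of the lemma. This argument recovers $2n-14$ for $|S|=2$ (with $e\leq 1$), but it gives only $2n-10$ for $|S|=1$. That is why the degree bound $\deg\leq n-1$ is used for $|S|\leq 2$ rather than the face count. The step needing care is the identity $2e=F_{2}+3F_{3}$. It holds because every edge of a maximal planar graph with $n\geq 4$ lies in two distinct triangular faces, and a face counted in $F_{2}$ or $F_{3}$ contains exactly one, respectively three, edges of $G[S]$.

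\emph{Sharpness.} For every $n\geq 7$, take the join of $K_{2}=uv$ with a path $P_{n-2}$. Let the path be $p_{1}\cdots p_{n-2}$ and join both $u$ and $v$ to every $p_{i}$. This graph is planar, with the path drawn between $u$ and $v$ and $uv$ drawn around one end. It has $1+2(n-2)+(n-3)=3n-6$ edges, so it is maximal planar. Its degrees are $n-1,n-1$ for $u,v$, then $3,3$ for the two ends of the path and $4$ for the inner path vertices. Only $u$ and $v$ have degree at least seven, so $X=2(n-7)=2n-14$.
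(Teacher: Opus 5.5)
Your proof is correct and is essentially the paper's. It uses the same incidence counts $\sum_{v\in S}\deg(v)=F_{1}+2F_{2}+3F_{3}$ and $2e=F_{2}+3F_{3}$ to get $X(G)\leq 2n-4+2e-6|S|$, then $e\leq 3|S|-6$ for $|S|\geq 3$, treats the small cases directly, and uses the same extremal join of $K_{2}$ with a path; the only differences are cosmetic (you settle $|S|=2$ by $\deg\leq n-1$ rather than by $e\leq 1$, and at $n=7$ the set $S$ of your extremal graph is empty, not $\{u,v\}$, though $X=0=2n-14$ still holds).
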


\begin{proof}
Let $S=\{v:\deg (v)\geq 7\}$, let $e$ be the number of edges of $G$ with both ends in $S$, and for $i=1,2,3$ let
$\varphi _{i}$ be the number of faces of $G$ with exactly $i$ corners in $S$; write
$F(S)=\varphi _{1}+\varphi _{2}+\varphi _{3}$ for the number of faces meeting $S$. A maximal planar graph of order
at least four is $3$-connected, so the faces at a vertex $v$ are pairwise distinct and $\deg (v)$ in number;
counting the pairs (vertex of $S$, face containing it) in two ways therefore gives
\[
\sum_{v\in S}\deg (v)=\varphi _{1}+2\varphi _{2}+3\varphi _{3}.
\]
A face is a triangle, so its corners are pairwise adjacent; a face with exactly two corners in $S$ therefore contains
exactly one edge of $G[S]$, a face with three corners contains three, and a face with at most one corner contains
none. Every edge lies in exactly two faces, whence
\[
2e=\varphi _{2}+3\varphi _{3}.
\]
Subtracting the second identity from the first gives
\begin{equation}
F(S)=\sum_{v\in S}\deg (v)-2e+\varphi _{3},\label{eq_faces}
\end{equation}
and therefore, since $F(S)$ is at most the number $2n-4$ of faces of $G$ and $\varphi _{3}\geq 0$,
(\ref{eq_faces}) gives
\begin{equation}
X(G)=\sum_{v\in S}\bigl(\deg (v)-6\bigr)=F(S)+2e-\varphi _{3}-6|S|\ \leq \ (2n-4)+2e-6|S| .\label{eq_Xbound}
\end{equation}
If $|S|\geq 3$ then $G[S]$ is a simple planar graph on $|S|$ vertices, so $e\leq 3|S|-6$ and $X(G)\leq 2n-16$. If
$|S|=2$ then $e\leq 1$ and $X(G)\leq 2n-14$. If $|S|=1$, say $S=\{v\}$, then $e=0$ and $F(S)=\deg (v)\leq n-1$, so
$X(G)=\deg (v)-6\leq n-7\leq 2n-14$ because $n\geq 7$. If $|S|=0$ then $X(G)=0\leq 2n-14$ because $n\geq 7$. In every
case $X(G)\leq 2n-14$, and \eqref{eq_excess} turns this into the second form.

For equality take two adjacent vertices $u,v$ and join both of them to every vertex of a path
$w_{1}w_{2}\cdots w_{n-2}$. This is a maximal planar graph: its faces are $uw_{i}w_{i+1}$ and
$vw_{i}w_{i+1}$ for $1\leq i\leq n-3$ together with $uvw_{1}$ and $uvw_{n-2}$, which are $2n-4$ in number, and it
has $(n-2)+(n-2)+(n-3)+1=3n-6$ edges. Its degrees are $\deg (u)=\deg (v)=n-1$, $\deg (w_{1})=\deg (w_{n-2})=3$
and $\deg (w_{i})=4$ otherwise, so $X=2(n-7)=2n-14$.
\end{proof}

The second bound of the lemma is attained as well. Stacking a vertex into every face of a maximal planar graph $H$ of
order $n_{H}$ with $\delta (H)\geq 4$ produces a maximal planar graph of order $n=3n_{H}-4$ in which the vertices of $H$
carry all the excess, $X=6n_{H}-24=2n-16$, while the $2n_{H}-4$ new vertices have degree three; the smallest
maximal planar graph obtained in this way is the stacked octahedron, of order $14$ with degrees $3^{8}8^{6}$. Smaller
examples of $X=2n-16$ with three vertices of degree at least seven exist, the smallest being of order $10$.

\begin{example}
\label{Ex_excess_sharp}
Let $n=9$ and let $G$ be the maximal planar graph used for equality: two adjacent vertices $u,v$ joined to
every vertex of the path $w_{1}w_{2}\cdots w_{7}$. Its degrees are $\deg (u)=\deg (v)=8$,
$\deg (w_{1})=\deg (w_{7})=3$ and $\deg (w_{i})=4$ for $2\leq i\leq 6$, so that
\[
X(G)=(8-6)+(8-6)=4=2n-14,\qquad 3n_{3}+2n_{4}+n_{5}=3\cdot 2+2\cdot 5+0=16=2n-2 .
\]
In the notation of the proof $S=\{u,v\}$, $e=1$, $\varphi _{1}=12$, $\varphi _{2}=2$,
$\varphi _{3}=0$ and $F(S)=14=2n-4$, so that \eqref{eq_Xbound} reads $X(G)=14+2-0-12=4$ with
equality at every step.
Three things make it extremal, and all three are visible in the picture: the set $S$ has two
elements, so that no face can have three large corners; the two large vertices are adjacent, so that
$e$ is as large as $|S|=2$ permits; and every one of the $2n-4$ faces meets $S$, because $u$ and $v$
lie on opposite sides of the path and the edge $uv$ closes the two remaining faces $uvw_{1}$ and
$uvw_{7}$ at its ends (Figure \ref{Fig_excess9}).
\end{example}

\begin{figure}[htbp]
\begin{center}
\includegraphics[width=0.9\textwidth]{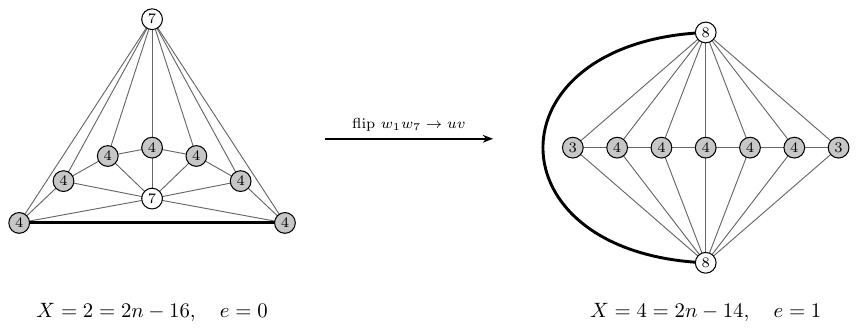}
\end{center}
\caption{Order nine. Left: the bipyramid, with $X=2=2n-16$ and no edge between two vertices of degree at least
seven. Right: the extremal maximal planar graph of Lemma \ref{Lemma_excess}, with $X=4=2n-14$ and one such edge. The
two differ by the flip of the bold edge, which is where the two units of excess come from.}%
\label{Fig_excess9}%
\end{figure}

The example also shows how little room there is above the bound. Joining $u$ and $v$ to a cycle
$w_{1}w_{2}\cdots w_{n-2}$ instead of a path gives the bipyramid, in which $\deg (u)=\deg (v)=n-2$
and every other degree is four; there $e=0$ while every face still meets $S$, so that $X=2n-16$, two
units less.
A single diagonal flip turns one into the other: flipping a rim edge $w_{i}w_{i+1}$ replaces it by
the edge $uv$, raises $\deg (u)$ and $\deg (v)$ by one each, and lowers $\deg (w_{i})$ and
$\deg (w_{i+1})$ from four to three, which is $+2$ for $X$.

\begin{example}
\label{Ex_excess_second}
The second bound is attained by the stacked octahedron, of order $n=14$: its six octahedron vertices
have degree eight and its eight stacked vertices degree three, so that $X=6\cdot 2=12=2n-16$. Here
$|S|=6$ and $S$ induces the octahedron, so $e=12=3|S|-6$, while $\varphi _{2}=24=2n-4$ and
$\varphi _{1}=\varphi _{3}=0$: the two inequalities used in the proof for $|S|\geq 3$, namely
$e\leq 3|S|-6$ and $F(S)\leq 2n-4$, are both equalities.
The smallest maximal planar graphs with $X=2n-16$ and three vertices of degree at least seven have order
ten, and there are seven of them up to isomorphism. One is obtained from $K_{4}$ by inserting into
its four faces chains of lengths $2,2,1,1$ in the sense of Proposition \ref{Prop_deg3}, the ends and
tips chosen so that every vertex of the $K_{4}$ is an end of exactly one of the two chains of length
two and gains a single neighbour in each of its two remaining faces; all four then have degree
$3+4=7$, and the six inserted vertices have degrees $3,3,3,3,4,4$ (Figure \ref{Fig_excess10}).
\end{example}

\begin{figure}[htbp]
\begin{center}
\includegraphics[width=0.52\textwidth]{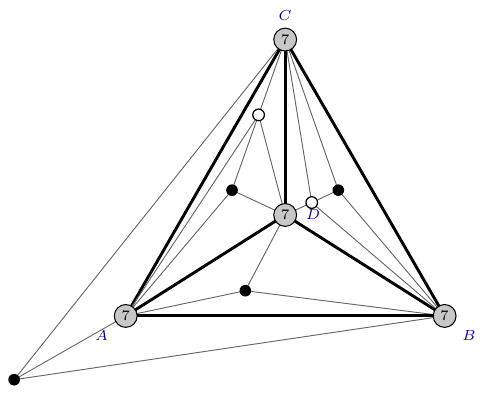}
\end{center}
\caption{The order-ten example of Example \ref{Ex_excess_second}: the graph $K_{4}$ (bold) with a chain
inserted into each of its four faces, so that $A$, $B$, $C$, $D$ all have degree seven and $X=4=2n-16$. The
six inserted vertices have degree three (black) or four (white).}%
\label{Fig_excess10}%
\end{figure}

The same enumeration confirms the first bound at the small orders: the maximum of $X(G)$ over all
maximal planar graphs of order $n$ is $2n-14$ for every $7\leq n\leq 11$.

Applied to a maximal planar graph of minimum degree three, Lemma \ref{Lemma_excess} bounds the number of vertices that can
escape the first window, and that is all one needs.

\begin{theorem}
\label{Tm_deg3exact}Let $k\geq 8$ and let $G$ be a maximal planar graph with $n\geq 7$ vertices and
$\delta (G)=3$. Then
\[
\mathrm{sp}(G,k)\ \geq \ \frac{(k-4)\,n+14}{k-2},
\]
and if $n\geq k+6$ then even
\[
\mathrm{sp}(G,k)\ \geq \ \frac{(k-4)\,n+16}{k-2},\qquad \text{so that}\qquad
\mathrm{MP}(n,3,k)\ \geq \ \left\lceil \frac{(k-4)\,n+16}{k-2}\right\rceil .
\]
\end{theorem}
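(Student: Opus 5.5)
The plan is to use the single window at $p=3$, as in the discussion preceding Lemma \ref{Lemma_excess}, and to pay for the vertices escaping it out of the excess $X(G)$. Let $L=\left\vert \{v:\deg (v)\geq k+4\}\right\vert $. Since $\delta (G)=3$, the window $\{3,4,\ldots ,k+3\}$ contains every vertex except these $L$, so by (\ref{For_window})
\[
\mathrm{sp}(G,k)\ \geq \ n-L .
\]
It therefore suffices to bound $L$ from above.

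Because $k\geq 8$, every vertex of degree at least $k+4$ has degree at least $7$. Such a vertex therefore lies in the set $S$ of the proof of Lemma \ref{Lemma_excess} and contributes at least $(k+4)-6=k-2$ to $X(G)$. All other contributions to $X(G)$ are nonnegative, so Lemma \ref{Lemma_excess} gives
\[
(k-2)L\ \leq \ X(G)\ \leq \ 2n-14 .
\]
Hence $L\leq (2n-14)/(k-2)$, and
\[
\mathrm{sp}(G,k)\ \geq \ n-\frac{2n-14}{k-2}=\frac{(k-4)n+14}{k-2},
\]
which is the first inequality. It holds for every $n\geq 7$, the range in which Lemma \ref{Lemma_excess} applies.

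For the sharper bound we split according to $L$.

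If $L\geq 3$, then at least three vertices have degree at least seven. The second statement of Lemma \ref{Lemma_excess} then gives $X(G)\leq 2n-16$, and the same computation yields $\mathrm{sp}(G,k)\geq ((k-4)n+16)/(k-2)$.

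If $L\leq 2$, the lemma is not needed: directly $\mathrm{sp}(G,k)\geq n-2$. We then only have to check that
\[
n-2\ \geq \ \frac{(k-4)n+16}{k-2}.
\]
Multiplying out, this is $(k-2)(n-2)\geq (k-4)n+16$, that is $2n\geq 2k+12$, that is $n\geq k+6$. This is exactly where the hypothesis $n\geq k+6$ is used. The bound on $\mathrm{MP}(n,3,k)$ then follows by taking the ceiling, since $\mathrm{sp}(G,k)$ is an integer.

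I do not expect a real obstacle; the substance is entirely in Lemma \ref{Lemma_excess}. The only points needing care are the following two.
\begin{itemize}
\item The condition $k\geq 8$ is what guarantees $k+4\geq 7$, so that the escaping vertices really are counted in $X(G)$ with excess at least $k-2>0$.
\item In the case split, the case with at most two escaping vertices must be handled by the trivial bound $n-2$ rather than by the lemma, because the refined estimate $X(G)\leq 2n-16$ requires at least three large vertices.
\end{itemize}
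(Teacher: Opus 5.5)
Your proof is correct and is essentially the paper's own argument. The window at $p=3$ gives $\mathrm{sp}(G,k)\geq n-b$ with $b=\left\vert \{v:\deg (v)\geq k+4\}\right\vert $, Lemma \ref{Lemma_excess} gives $(k-2)b\leq X(G)\leq 2n-14$, and the sharper bound splits into $b\geq 3$ (where $X(G)\leq 2n-16$ applies) and $b\leq 2$ (where $n\geq k+6$ is used), exactly as in the paper. One small quibble: $k+4\geq 7$ and $k-2>0$ already hold for $k\geq 3$, so that step does not really depend on $k\geq 8$.
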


\begin{proof}
Put $B=\{v:\deg (v)\geq k+4\}$ and $b=|B|$. As $\delta (G)=3$, every vertex outside $B$ has its degree in
$\{3,\ldots ,k+3\}$, which is a set of $k+1$ consecutive integers; the window at $p=3$ therefore already contains
$n-b$ vertices and $\mathrm{sp}(G,k)\geq n-b$. Every vertex of $B$ contributes at least $(k+4)-6=k-2$ to $X(G)$, so
$b(k-2)\leq X(G)\leq 2n-14$ by Lemma \ref{Lemma_excess}, and
$\mathrm{sp}(G,k)\geq n-(2n-14)/(k-2)=((k-4)n+14)/(k-2)$.

For the second bound distinguish two cases. As $k\geq 8$, the vertices of $B$ have degree at least $k+4\geq 12$, so
$B\subseteq S=\{v:\deg (v)\geq 7\}$. If $b\geq 3$ then $|S|\geq 3$, the second bound of Lemma \ref{Lemma_excess}
gives $X(G)\leq 2n-16$, and $\mathrm{sp}(G,k)\geq n-(2n-16)/(k-2)=((k-4)n+16)/(k-2)$. If $b\leq 2$ then
$\mathrm{sp}(G,k)\geq n-2$, and $n-2\geq ((k-4)n+16)/(k-2)$ because that inequality is equivalent to
$2n\geq 2k+12$, which is the hypothesis $n\geq k+6$.
\end{proof}

\begin{example}
\label{Ex_deg3count}
The counting of the proof at $k=9$ and $n=35$. A vertex of $B$ has degree at least $k+4=13$ and
therefore costs at least $k-2=7$ units of excess. If $b\geq 3$ then three vertices have degree at
least seven, Lemma \ref{Lemma_excess} grants only $X(G)\leq 2n-16=54$, and
$b\leq \left\lfloor 54/7\right\rfloor =7$, whence $\mathrm{sp}(G,9)\geq 35-7=28$; and if $b\leq 2$
then $\mathrm{sp}(G,9)\geq 33$ outright. So the bound is
$28=\left\lceil (5\cdot 35+16)/7\right\rceil =\left\lceil 191/7\right\rceil $, the value of the
theorem. The maximised bound of Theorem
\ref{Tm_planar34}(i) gives $\left\lceil (7\cdot 35+12)/10\right\rceil =26$ at the same order. At
$n=100$ the two bounds read $74$ and $72$, at $n=1000$ they read $717$ and $702$, and the gap grows
linearly because the coefficients are $5/7$ and $7/10$.
\end{example}


Compare the coefficients. For $k=8$ that of Theorem \ref{Tm_deg3exact} equals $c_{3,8}=(k-2)/(k+1)=\frac23$, the
coefficient of Theorem \ref{Tm_planar34}; for $k\leq 7$ the latter is the larger; and for $k\geq 9$ the former is
larger, since $\frac{k-4}{k-2}-\frac{k-2}{k+1}$ has the sign of $(k-4)(k+1)-(k-2)^{2}=k-8$. In particular the
maximised bound of Theorem \ref{Tm_planar34} is \emph{not} asymptotically tight for $\delta =3$ and $k\geq 9$,
and the range $k\leq 8$ is exactly the range in which it is. This is a comparison of coefficients only: for
small $n$ the additive term $16/(k-2)$ can make Theorem \ref{Tm_deg3exact} the stronger bound even when its
coefficient is not. And for $k\geq 9$ its coefficient is the optimal one.

\begin{remark}
\label{Rem_crossover}
The crossover is easiest to read as a budget of escapes from the window at $p=3$. That window
catches every vertex of degree at most $k+3$, so both bounds have the form $n$ minus a bound on the
number of vertices which avoid it, and they differ only in how that number is bounded. The degree
sum $6n-12$ by itself allows $(3n-12)/(k+1)$ escapes, which is what Theorem \ref{Tm_planar34}(i)
says in the form $n-((k-2)n+12)/(k+1)$; the excess inequality allows $(2n-16)/(k-2)$, which is what
Theorem \ref{Tm_deg3exact} says. The two escape densities are $3/(k+1)$ and $2/(k-2)$, and
\[
\frac{3}{k+1}-\frac{2}{k-2}=\frac{k-8}{(k+1)(k-2)},
\]
so the arithmetical constraint is the weaker of the two exactly for $k\geq 9$. Subtracting from one
returns the coefficients $\frac{k-2}{k+1}$ and $\frac{k-4}{k-2}$ and the difference computed above.

Behind the arithmetic stands a density. Equality in Theorem \ref{Tm_planar34}(i) requires the degree
sequence $3^{s}(k+4)^{n-s}$, in which a fraction $\frac{k-2}{k+1}$ of all vertices has degree three;
but $3n_{3}\leq 3n_{3}+2n_{4}+n_{5}\leq 2n-2$ in every maximal planar graph, so at most two thirds of the
vertices can have degree three. Since $\frac{k-2}{k+1}>\frac{2}{3}$ precisely for $k\geq 9$, the
extremal density is unattainable there even in the limit, while for $k\leq 8$ it is at most $2/3$
and Section \ref{Sec_computer} realises it. 
\end{remark}


\begin{proposition}
\label{Prop_deg3}For every $k\geq 8$ there is a periodic triangulation $S_{k}$ of the plane all of whose degrees
lie in $\{3,4,k+4\}$, the three occurring with densities
\[
\frac{4}{k-2},\qquad \frac{k-8}{k-2},\qquad \frac{2}{k-2}
\]
respectively. Its minimum degree is three and $w_{k}(S_{k})=\frac{k-4}{k-2}$, so that by Lemma
\ref{Lemma_periodic} and Theorem \ref{Tm_deg3exact}
\[
\mathrm{MP}(n,3,k)=\frac{k-4}{k-2}\,n+O(\sqrt{n})
\]
for every sufficiently large $n$; in particular $\lim_{n\to \infty }\mathrm{MP}(n,3,k)/n=\frac{k-4}{k-2}$.
\end{proposition}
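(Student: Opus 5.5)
The plan is to build $S_{k}$ from the triangular lattice by inserting a chain of new vertices into every face. The chains are the same gadget as the chain of Proposition \ref{Prop_chain4} and of the order-ten example after Lemma \ref{Lemma_excess}. The lattice vertices become the hubs of degree $k+4$, and each chain contributes exactly one vertex of degree three and otherwise vertices of degree four.

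\emph{The chain.} A chain of length $r\geq 1$ in a face $xyz$ is a path $c_{1}\cdots c_{r}$ drawn from the corner $x$ towards the side $yz$. Every $c_{i}$ is joined to $y$ and to $z$, and $c_{1}$ is also joined to $x$. This triangulates the face into the triangles
\[
xyc_{1},\quad xzc_{1},\quad yc_{i}c_{i+1},\quad zc_{i}c_{i+1}\ (1\leq i<r),\quad yzc_{r}.
\]
The vertices $c_{1},\ldots ,c_{r-1}$ have degree four and the tip $c_{r}$ has degree three; for $r=1$ this is plain stacking. The corner $x$ gains one neighbour, while $y$ and $z$ gain $r$ neighbours each.

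\emph{The periodic pattern.} The triangular lattice has two faces per vertex, the up-faces $\{v,v+(1,0),v+(0,1)\}$ and the down-faces. Give every up-face a chain of length $r_{1}$ with $x=v$, and every down-face a chain of length $r_{2}$, with $x$ chosen by a fixed translate-invariant rule. A lattice vertex is the $x$-corner of exactly one face of each kind and a $y$- or $z$-corner of two faces of each kind. It therefore gains $2+2r_{1}+2r_{2}$ neighbours, and it ends with degree $k+4$ exactly when $r_{1}+r_{2}=(k-4)/2$.

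For even $k\geq 8$ take $r_{1}=r_{2}=(k-4)/4$ when that is an integer, and $r_{1}=r_{2}\pm 1$ otherwise. For odd $k$ the quantity $(k-4)/2$ is not an integer. There I would let the chain lengths depend on the parity of the row, as in Propositions \ref{Prop_chain4} and \ref{Prop_wheel}, with period $(1,0),(0,2)$. A vertex in row $j$ is a corner of up-faces in rows $j$ and $j-1$, so the requirement becomes a sum of lengths from two consecutive rows equal to $k-4$, and that sum can be odd.

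\emph{Densities and window.} Per lattice vertex the pattern carries two tips of degree three and $r_{1}+r_{2}-2=(k-8)/2$ vertices of degree four. The domain therefore holds $1+2+(k-8)/2=(k-2)/2$ vertices, which gives the densities $\frac{2}{k-2}$, $\frac{4}{k-2}$ and $\frac{k-8}{k-2}$. As a check, $\sum_{v}(6-\deg (v))=0$ per domain. The degrees $3$ and $4$ lie in one window and $k+4$ is at distance at least $k+1$ from both, so
\[
w_{k}=\max \Bigl\{ \frac{k-4}{k-2},\ \frac{2}{k-2}\Bigr\} =\frac{k-4}{k-2}.
\]
Lemma \ref{Lemma_periodic}, in its first assertion, then gives the upper bound $\frac{k-4}{k-2}\,n+O(\sqrt{n})$, and Theorem \ref{Tm_deg3exact} gives the matching lower bound. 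The case $k=8$ degenerates to stacking in every face, with no degree-four vertices.

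\emph{Main obstacle.} The delicate step is the bookkeeping for odd $k$. One must pick row-dependent lengths so that every lattice vertex, whatever the parity of its row, receives exactly $k-2$ new neighbours. Simplicity of the lift is not an issue, since all inserted vertices are new and distinct faces receive distinct chains.
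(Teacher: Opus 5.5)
Your construction is the paper's: a chain inserted into every face of the triangular lattice, each lattice vertex the tip of exactly two faces and an end of the other four, with the lengths arranged so that every vertex gains $k-4$ from its ends. The only difference is bookkeeping. The paper handles all $k\geq 8$ at once by using four face classes (up/down times the parity of the column, period $(2,0),(0,1)$) with lengths summing to $M=k-4$. You split into even and odd $k$ instead. Your odd case does work: keep $r_{2}$ constant and alternate only the up-lengths by row. Every vertex is an end of one up-face in row $j$ and one in row $j-1$, so $r_{1}(0)+r_{1}(1)+2r_{2}=k-4$ suffices whatever tip rule you use for the down-faces.

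One step is wrong as written and must be corrected. The hub degree $k+4$ is \emph{not} at distance at least $k+1$ from $4$: the difference is $k$. So the window $\{4,\ldots ,k+4\}$ contains both the degree-four vertices and the hubs, and your maximum omits its density
$\frac{k-8}{k-2}+\frac{2}{k-2}=\frac{k-6}{k-2}$.

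The conclusion $w_{k}(S_{k})=\frac{k-4}{k-2}$ survives, because $\frac{k-6}{k-2}<\frac{k-4}{k-2}$. You need to argue it as the paper does. Any window containing $k+4$ begins at some $p\geq 4$, so it misses all vertices of degree three and catches at most $1-\frac{4}{k-2}$. Any window missing $k+4$ catches at most the $\frac{k-4}{k-2}$ taken by $\{3,\ldots ,k+3\}$.
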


\begin{proof}
Draw the triangular lattice on the vertex set $\mathbb{Z}^{2}$ as the square grid together with the diagonal
joining $(1,0)$ to $(0,1)$ in every cell, so that its faces are the upward triangles
$U(p)=\{p,p+e_{1},p+e_{2}\}$ and the downward triangles $D(p)=\{p+e_{1},p+e_{2},p+e_{1}+e_{2}\}$, every vertex
has degree six, and the six faces at a vertex $v$ are
\begin{equation}
U(v),\quad D(v-e_{1}),\quad U(v-e_{1}),\quad D(v-e_{1}-e_{2}),\quad U(v-e_{2}),\quad D(v-e_{2}).\label{eq_sixfaces}
\end{equation}

Into a triangle $abc$ we insert a \emph{chain of length} $m\geq 1$: put a vertex $x_{1}$ into $abc$, joined to
$a,b,c$; then a vertex $x_{2}$ into the face $abx_{1}$, joined to $a,b,x_{1}$; then $x_{3}$ into $abx_{2}$; and
so on, $m$ times. The neighbours of $x_{1}$ are then $a,b,c,x_{2}$, those of $x_{i}$ for $1<i<m$ are
$a,b,x_{i-1},x_{i+1}$, and those of $x_{m}$ are $a,b,x_{m-1}$, so a chain of length $m$ contributes $m-1$
vertices of degree four and one vertex of degree three, while the two \emph{ends} $a$ and $b$ each gain $m$
neighbours and the \emph{tip} $c$ gains one. (For $m=1$ this is the insertion of a single vertex of degree three
into $abc$.)

We draw the chain inside its triangle, the vertex $x_{i}$ at the point dividing the segment from $c$ to the
midpoint of $ab$ in the ratio $i:(m+1-i)$, and all the edges as straight segments; the drawing is then plane, and
in the periodic pattern below all edges have length at most the diameter of a lattice triangle, as Lemma
\ref{Lemma_periodic} requires.

Each face of the lattice is a triangle with one distinguished corner, which we call its \emph{tip}: for
an upward face $U(p)=\{p,p+e_{1},p+e_{2}\}$ the tip is $p$, and for a downward face
$D(p)=\{p+e_{1},p+e_{2},p+e_{1}+e_{2}\}$ it is $p+e_{1}+e_{2}$; the other two corners are its \emph{ends}.
The tip is the corner the chain inserted into that face will be joined to by all of its vertices, while only
the first and the last vertex of the chain reach the two ends, which is why a lattice vertex gains a great
deal of degree from the faces of which it is the tip and little from those of which it is an end. Insert now
a chain into every face of the lattice, choosing
\[
U(p):\ \text{tip }p,\ \text{ends }p+e_{1},p+e_{2};\qquad
D(p):\ \text{tip }p+e_{1}+e_{2},\ \text{ends }p+e_{1},p+e_{2},
\]
and letting the length depend only on the class of the face: it is $m_{U,i}$ for $U(p)$ and $m_{D,i}$ for
$D(p)$, where $i\in \{0,1\}$ is the parity of the first coordinate of $p$. By \eqref{eq_sixfaces} a vertex
$v=(x,y)$ is the tip of exactly two of its six faces, namely $U(v)$ and $D(v-e_{1}-e_{2})$, and an end of the
remaining four, which are $U(v-e_{1}),U(v-e_{2}),D(v-e_{1}),D(v-e_{2})$ and lie in the classes
$(U,x-1),(U,x),(D,x-1),(D,x)$, that is one class of each parity for each of the two shapes. Every lattice vertex
therefore has degree
\[
6+2+m_{U,0}+m_{U,1}+m_{D,0}+m_{D,1}=8+M,\qquad M=m_{U,0}+m_{U,1}+m_{D,0}+m_{D,1},
\]
whatever $x$ and $y$ are. We take $M=k-4$, with the four lengths as equal as possible; since $k\geq 8$ they are
all at least one, so every face really receives a chain. For $k=8$ all four lengths are one and $S_{8}$ is the
stacked triangular lattice, with degrees $3$ and $12$.

The construction is invariant under the translations by $(2,0)$ and $(0,1)$, and a fundamental domain of that
group contains two lattice vertices and one face of each of the four classes. Hence per fundamental domain
\[
N=2+M=k-2,\qquad N_{3}=4,\qquad N_{4}=M-4=k-8,\qquad N_{k+4}=2,
\]
which are the asserted densities; the minimum degree is three. (As a check,
$\sum_{v}(6-\deg (v))=4\cdot 3+(k-8)\cdot 2+2\cdot (2-k)=0$, as it must be over a fundamental domain.)

Finally the window density. The degrees $3$ and $4$ both lie in $\{3,\ldots ,k+3\}$, a window of $k+1$
consecutive integers, which therefore catches all but the $N_{k+4}=2$ vertices of degree $k+4$, a fraction
$1-\frac{2}{k-2}=\frac{k-4}{k-2}$. Any other window either misses the degree $k+4$, and then catches at most as
many vertices, or contains $k+4$ and hence begins at some $p\geq 4$, so that it misses all four vertices of
degree three and catches at most a fraction $1-\frac{4}{k-2}$. Thus $w_{k}(S_{k})=\frac{k-4}{k-2}$, and Lemma
\ref{Lemma_periodic} together with Theorem \ref{Tm_deg3exact} completes the proof. Figure \ref{Fig_chain} shows
$S_{9}$ and $S_{12}$.
\end{proof}

\begin{example}
\label{Ex_S9}
The pattern $S_{9}$ in detail. Here $M=k-4=5$, and the four lengths, as equal as possible, are
$m_{U,0}=2$ and $m_{U,1}=m_{D,0}=m_{D,1}=1$: the upward triangles of the even columns carry a chain
of two vertices, and the triangles of the three other classes carry a single stacked vertex. Every
lattice vertex has degree $6+2+M=13=k+4$. A fundamental domain of the translations $(2,0)$ and
$(0,1)$ has $N=k-2=7$ vertices: the two lattice vertices, the four last vertices of the four chains,
which have degree three, and the first vertex of the chain of length two, which has degree four.
The densities are $4/7$ for the degree three, $1/7$ for the degree four and $2/7$ for the degree
$k+4=13$, in agreement with Proposition \ref{Prop_deg3}. The window at $p=3$ is $\{3,4,\ldots ,12\}$; it catches the
five vertices of degree three or four of the domain, a density $5/7$. A window containing the degree
$13$ begins at some $p\geq 4$, misses the four vertices of degree three and catches a density $3/7$
only. Hence $w_{9}(S_{9})=\frac{5}{7}=\frac{k-4}{k-2}$.
Taking the plane modulo the translations $(6,0)$ and $(0,5)$, that is a block of fifteen fundamental
domains, one obtains a triangulation of the torus with $105$ vertices, $315$ edges and $210$ faces,
with $30$ vertices of degree $13$, $60$ of degree three and $15$ of degree four, every edge in
exactly two triangles; this checks the pattern in finite terms.
Cutting a disk out of $S_{9}$ and closing it as in Lemma \ref{Lemma_periodic} therefore gives, for
every large $n$, a maximal planar graph of order $n$ with $\delta =3$ and
$\mathrm{sp}(G,9)=\frac{5}{7}n+O(\sqrt{n})$, against the lower bound
$\left\lceil (5n+16)/7\right\rceil $ of Theorem \ref{Tm_deg3exact}.
\end{example}

Figure \ref{Fig_chain9} draws $S_{9}$ with one fundamental domain shaded.

\begin{figure}[htbp]
\begin{center}
\includegraphics[width=0.7\textwidth]{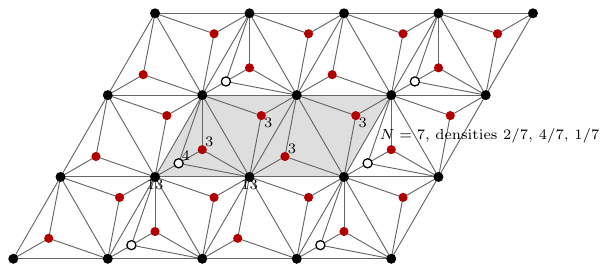}
\end{center}
\caption{The pattern $S_{9}$ of Example \ref{Ex_S9}, with one fundamental domain shaded: the parallelogram
spanned by $(2,0)$ and $(0,1)$ contains two lattice vertices of degree $13$ and one triangle of each of the
four classes, hence one chain of length two and three chains of length one. So $N=7$, and the degrees $3$,
$4$, $13$ occur with densities $4/7$, $1/7$, $2/7$.}%
\label{Fig_chain9}%
\end{figure}

For $k=10$ the four lengths are $2,2,1,1$, the domain has eight vertices and the densities are
$\frac{1}{2}$ for the degree three, $\frac{1}{4}$ for the degree four and $\frac{1}{4}$ for the
degree $14$, so that $w_{10}(S_{10})=\frac{3}{4}$; for $k=12$ they are $2,2,2,2$, the domain has ten
vertices, and the densities are $\frac{2}{5}$, $\frac{2}{5}$ and $\frac{1}{5}$ with
$w_{12}(S_{12})=\frac{4}{5}$. The vertices of degree four are the inner vertices of the chains, so
they appear only from $k=9$ on: in $S_{8}$ all four chains have length one, and $S_{8}$ is the
triangular lattice with a vertex stacked into every face, with the degrees three and twelve in the
proportion $2{:}1$.



Two ranges of orders are worth recording before we leave $\delta =3$, for a reason that is not about
exact values for their own sake. They are the only orders at which $\mathrm{MP}(n,\delta ,k)$ is known
\emph{exactly for all $n$} anywhere in this paper, and they end precisely where Theorem
\ref{Tm_deg3exact} begins to bite: the threshold $\lceil (3k+10)/2\rceil $ below is the order at which a
third vertex of degree at least $k+4$ first becomes affordable, which is what the theorem needs in order to
say anything. The proposition therefore explains the shape of the theorem rather than adding to it.

\begin{proposition}
\label{Prop_deg3small}Let $k\geq 8$. Then
\[
\mathrm{MP}(n,3,k)=n\ \ \text{for }4\leq n\leq k+4,\qquad
\mathrm{MP}(n,3,k)=n-2\ \ \text{for }k+5\leq n\leq \left\lceil \tfrac{3k+10}{2}\right\rceil -1 .
\]
In particular $\mathrm{MP}(n,3,k)$ never takes the value $n-1$.
\end{proposition}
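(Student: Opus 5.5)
The plan is to squeeze $\mathrm{MP}(n,3,k)$ between the trivial window at $p=3$ and one explicit extremal graph. The one quantity to track is $b=|\{v:\deg(v)\geq k+4\}|$, the number of vertices escaping the window $\{3,\ldots,k+3\}$. Since $\delta(G)=3$, every other vertex has its degree in that window, so $\mathrm{sp}(G,k)\geq n-b$ for every maximal planar $G$ with $\delta(G)=3$. This is the first paragraph of the proof of Theorem \ref{Tm_deg3exact}, and it drives both ranges.

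\emph{First range, $4\leq n\leq k+4$.} Here every degree is at most $n-1\leq k+3$, so $b=0$ and $\mathrm{sp}(G,k)=n$ for every $G$. Since $\mathrm{sp}(G,k)\leq n$ trivially, it remains only to check that the minimum is over a nonempty class, i.e. that some maximal planar graph of order $n$ with minimum degree $3$ exists. For $n=4$ take $K_{4}$. For $n\geq 5$ take a stacked triangulation (repeatedly insert a degree-three vertex into a face), whose last stacked vertex has degree three. Hence $\mathrm{MP}(n,3,k)=n$ in this range.

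\emph{Second range, $k+5\leq n\leq \lceil (3k+10)/2\rceil -1$.} The lower bound is $b\leq 2$. Suppose $b\geq 3$. As $k\geq 8$, every vertex of degree at least $k+4$ has degree at least $7$, so at least three vertices have degree at least seven, and Lemma \ref{Lemma_excess} gives $X(G)\leq 2n-16$. Each such vertex contributes at least $k-2$ to $X(G)$, so $3(k-2)\leq 2n-16$, that is $n\geq (3k+10)/2$. Because $n$ is an integer, this means $n\geq \lceil (3k+10)/2\rceil$, contradicting the range. Thus $\mathrm{sp}(G,k)\geq n-2$.

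For the matching upper bound I use the equality graph of Lemma \ref{Lemma_excess}: two adjacent vertices $u,v$ joined to every vertex of a path $w_{1}\cdots w_{n-2}$. Its degrees are $n-1$ (twice), $3$ (twice, at $w_{1}$ and $w_{n-2}$) and $4$ otherwise, so $\delta=3$.
\begin{itemize}
\item A window not containing $n-1$ misses $u,v$ and so holds at most $n-2$ vertices.
\item A window containing $n-1$ starts at some $p\geq n-1-k\geq 4$, because $n\geq k+5$. It therefore misses both vertices of degree three and again holds at most $n-2$ vertices.
\end{itemize}
Hence $\mathrm{sp}=n-2$, and $\mathrm{MP}(n,3,k)=n-2$ in this range.

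\emph{The value $n-1$.} The same graph exists for every $n\geq k+5$, and the window argument above used only $n\geq k+5$. So $\mathrm{MP}(n,3,k)\leq n-2$ for all $n\geq k+5$. Together with $\mathrm{MP}(n,3,k)=n$ for $n\leq k+4$, this shows that the value $n-1$ never occurs, for any $n$.

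The only delicate step is the integer rounding in the threshold, i.e. checking that $b\geq 3$ forces $n\geq\lceil(3k+10)/2\rceil$ exactly. The rest is bookkeeping with the two results already proved.
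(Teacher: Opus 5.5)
Your proof is correct and follows the paper's argument step for step: the window at $p=3$ gives $\mathrm{sp}(G,k)\geq n-b$, the second bound of Lemma~\ref{Lemma_excess} rules out $b\geq 3$ below $\lceil (3k+10)/2\rceil$, and the graph with two adjacent hubs joined to a path gives the matching upper bound. You also check two things the paper leaves implicit: that the class is nonempty in the first range, and that the path graph gives $\mathrm{MP}(n,3,k)\leq n-2$ for every $n\geq k+5$, which is what justifies the ``never $n-1$'' clause beyond the two stated ranges.
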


\begin{proof}
If $n\leq k+4$, every degree of a maximal planar graph of order $n$ lies between $3$ and $n-1\leq k+3$, so the window
at $p=3$ contains all $n$ vertices.

Let now $k+5\leq n\leq \lceil (3k+10)/2\rceil -1$ and let $G$ be a maximal planar graph with $\delta (G)=3$; put
$B=\{v:\deg (v)\geq k+4\}$ and $b=|B|$ as in the proof of Theorem \ref{Tm_deg3exact}. If $b\geq 3$ then three
vertices have degree at least $k+4\geq 12$, so the second bound of Lemma \ref{Lemma_excess} gives
$3(k-2)\leq b(k-2)\leq X(G)\leq 2n-16$, that is $n\geq (3k+10)/2$, contradicting the hypothesis. Hence $b\leq 2$
and $\mathrm{sp}(G,k)\geq n-b\geq n-2$.

Equality is attained by the graph that makes Lemma \ref{Lemma_excess} sharp: two adjacent vertices $u,v$ joined
to every vertex of a path $w_{1}\cdots w_{n-2}$. Its degrees are $\deg (u)=\deg (v)=n-1$ and
$4^{\,n-4}3^{2}$, and $n-1\geq k+4$, so the window at $p=3$ contains all $n-2$ vertices other than $u$ and $v$
while no window contains more: a window containing $n-1$ begins at $p\geq 4$ and so misses $w_{1}$ and
$w_{n-2}$. Hence $\mathrm{sp}=n-2$.
\end{proof}


\begin{figure}[htbp]
\begin{center}
\includegraphics[width=0.46\textwidth]{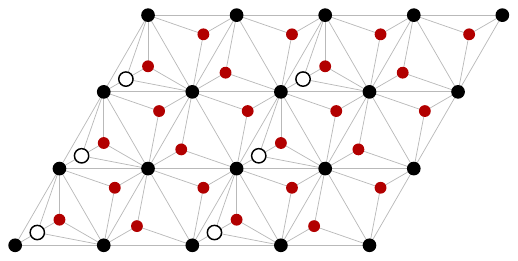}\hfill
\includegraphics[width=0.46\textwidth]{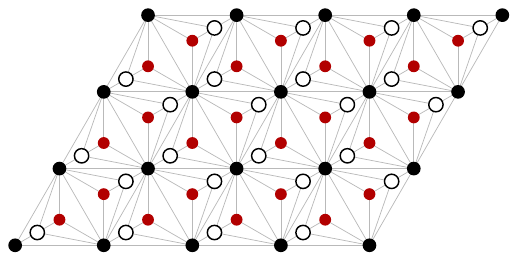}
\end{center}
\caption{The triangulations $S_{k}$ of Proposition \ref{Prop_deg3} for $k=9$, with chain lengths $2,1,1,1$, and
for $k=12$, with chain lengths $2,2,2,2$. The vertices of the lattice, of degree $k+4$, are the large black
dots; inside each chain the vertices of degree four are white and the vertex of degree three at its end is red.
Every triangle carries a chain, whose two ends are the endpoints of the diagonal of the cell and whose tip is
the third corner.}%
\label{Fig_chain}%
\end{figure}

\section{Summary and concluding remarks}

\begin{theorem}
\label{Tm_asym}Let $\delta \in \{3,4,5\}$ and $k\geq 0$, with $k\leq 8$ if $\delta =3$. Then the maximised
lower bound for $\mathrm{MP}(n,\delta ,k)$ is asymptotically tight: with
$c_{\delta ,k}$ its coefficient,
\[
\mathrm{MP}(n,\delta ,k)=c_{\delta ,k}\,n+O(\sqrt{n})
\]
for every sufficiently large $n$, so that $\lim_{n\to \infty }\mathrm{MP}(n,\delta ,k)/n=c_{\delta ,k}$,
and $c_{\delta ,k}$ is the entry of Table \ref{Tab_coeff}. For $\delta =3$ and $k\geq 9$ the maximised bound is
not asymptotically tight and the limit is $\frac{k-4}{k-2}$, by Theorem \ref{Tm_deg3exact} and Proposition
\ref{Prop_deg3}; so for every pair $(\delta ,k)$ the limit exists and is the entry of Table
\ref{Tab_coeff}.
\end{theorem}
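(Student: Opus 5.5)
The plan is to assemble Theorem \ref{Tm_asym} from results already proved, with no new argument beyond checking that every pair $(\delta ,k)$ is covered exactly once. For each pair with $\delta \in \{4,5\}$, or $\delta =3$ and $k\leq 8$, the lower bound $\mathrm{MP}(n,\delta ,k)\geq c_{\delta ,k}n+O(1)$ is Theorem \ref{Tm_planar} (for $\delta =5$) and Theorem \ref{Tm_planar34} (for $\delta =3,4$), read at the $t$ that is optimal for large $n$. Since those inequalities hold at every order, only the upper bound needs attention. By Lemma \ref{Lemma_periodic} it suffices to exhibit, for each pair, a periodic triangulation of the plane with minimum degree $\delta $ whose degrees have the extremal densities of $(\delta ,k)$. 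The lemma then gives $\mathrm{MP}(n,\delta ,k)=c_{\delta ,k}n+O(\sqrt{n})$ for all large $n$, and hence the limit.

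The key step is a case bookkeeping against Table \ref{Tab_coeff}. For $k=0$ and all three $\delta $ I would cite the periodic triangulations of Caro and West \cite[Theorem 2.5]{CaroWest}; for $\delta =5$ the family of Theorem \ref{Tm_hex} supplies this as well. For $\delta =5$ the remaining cases are $k=1$ by Proposition \ref{Prop_hand}(b), $k=2$ by Proposition \ref{Prop_hand}(c) or \ref{Prop_wheel}, and $k\geq 2$ by Proposition \ref{Prop_wheel}. For $\delta =4$ they are $k=1$ by Proposition \ref{Prop_hand}(a), $k=2$ by Proposition \ref{Prop_computer}, and $k\geq 3$ by Proposition \ref{Prop_chain4} (with \ref{Prop_hand}(f) as an alternative at $k=3$). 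For $\delta =3$ they are $k=2$ and $k=5$ by Proposition \ref{Prop_hand}(d),(e), and $k\in \{1,3,4,6,7,8\}$ by Proposition \ref{Prop_computer}. In each case I would verify that the constructed densities coincide with $c_{\delta ,k}$ and $1-tc_{\delta ,k}$ at degrees spaced $h=k+1$ apart, so that $w_{k}(T)=c_{\delta ,k}$.

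For $\delta =3$ and $k\geq 9$ the lower bound $((k-4)n+14)/(k-2)$ is Theorem \ref{Tm_deg3exact}, valid for $n\geq 7$. The upper bound is Proposition \ref{Prop_deg3}, which feeds $S_{k}$ with $w_{k}(S_{k})=\frac{k-4}{k-2}$ into the first assertion of Lemma \ref{Lemma_periodic}; this gives $\frac{k-4}{k-2}n+O(\sqrt{n})$ on both sides. That the maximised bound is not tight there follows from the sign computation $(k-4)(k+1)-(k-2)^{2}=k-8>0$ already made after Theorem \ref{Tm_deg3exact}: the true coefficient strictly exceeds $c_{3,k}=\frac{k-2}{k+1}$.

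The only delicate point I expect is the $k=0$ cases, where the paper relies on \cite{CaroWest} rather than an in-paper periodic pattern. There I would check that the Caro--West tilings are genuinely periodic with the extremal densities $1/7$, $1/5$, $1/3$ and minimum degree $\delta $, so that Lemma \ref{Lemma_periodic} applies verbatim with error $O(\sqrt{n})$. Everything else is matching indices.
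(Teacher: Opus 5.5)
Your proposal is correct and follows the paper's proof essentially exactly. The lower bounds come from Theorems \ref{Tm_planar} and \ref{Tm_planar34}, and from Theorem \ref{Tm_deg3exact} for $\delta=3$, $k\geq 9$; the upper bounds come from Lemma \ref{Lemma_periodic} applied pair by pair, to the Caro--West tilings for $k=0$ and to Propositions \ref{Prop_hand}, \ref{Prop_chain4}, \ref{Prop_wheel}, \ref{Prop_computer} and \ref{Prop_deg3} otherwise. One small correction: Theorem \ref{Tm_hex} by itself only reaches the orders $n=3L^{2}+3L+7$, so to get every large $n$ you should instead feed Lemma \ref{Lemma_periodic} the periodic lattice with all rhombi $R(v)$, $v\in\Lambda$, flipped (degrees $5,6,7$ in equal thirds), which is what the paper's proof means by the flipped triangular lattice --- harmless here, since \cite{CaroWest} already covers $(5,0)$.
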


\begin{proof}
For $k=0$ the periodic triangulations of \cite[Theorem 2.5]{CaroWest} (for $\delta =5$ also the
flipped triangular lattice of Section \ref{Sec_hex}) have the extremal densities and minimum degree $\delta $, so
Lemma \ref{Lemma_periodic} applies; for $\delta =4$ and
$k\geq 3$ it is Proposition \ref{Prop_chain4}; for $\delta =5$ and $k\geq 2$ it is Proposition
\ref{Prop_wheel}; for the six pairs of
Proposition \ref{Prop_hand} it follows from that proposition and Lemma \ref{Lemma_periodic}; for the remaining pairs
the periodic triangulations of Section \ref{Sec_computer} have the extremal densities and minimum degree $\delta $,
and Lemma \ref{Lemma_periodic} applies.
\end{proof}

Table \ref{Tab_coeff} lists the coefficients themselves.

\begin{table}[htbp]
\begin{center}
\begin{tabular}{|c|c|c|c|}
\hline
$k$ & $\delta =3$ & $\delta =4$ & $\delta =5$\\ \hline
$0$ & $\frac{1}{7}$ & $\frac{1}{5}$ & $\frac{1}{3}$\\[1mm]
$1$ & $\frac{1}{4}$ & $\frac{1}{3}$ & $\frac{1}{2}$\\[1mm]
$2$ & $\frac{1}{3}$ & $\frac{4}{9}$ & $\frac{2}{3}$\\[1mm]
$3$ & $\frac{5}{12}$ & $\frac{1}{2}$ & $\frac{3}{4}$\\[1mm]
$4$ & $\frac{7}{15}$ & $\frac{3}{5}$ & $\frac{4}{5}$\\[1mm]
$5$ & $\frac{1}{2}$ & $\frac{2}{3}$ & $\frac{5}{6}$\\[1mm]
$6$ & $\frac{4}{7}$ & $\frac{5}{7}$ & $\frac{6}{7}$\\[1mm]
$7$ & $\frac{5}{8}$ & $\frac{3}{4}$ & $\frac{7}{8}$\\[1mm]
$8$ & $\frac{2}{3}$ & $\frac{7}{9}$ & $\frac{8}{9}$\\[1mm]
$k\geq 9$ & $\frac{k-4}{k-2}$ & $\frac{k-1}{k+1}$ & $\frac{k}{k+1}$\\ \hline
\end{tabular}
\end{center}
\caption{The coefficient $c=\lim_{n\to \infty }\mathrm{MP}(n,\delta ,k)/n$ for every pair
$(\delta ,k)$. For $\delta =3$ and $k\geq 9$ it is the coefficient of Theorem \ref{Tm_deg3exact},
attained by Proposition \ref{Prop_deg3}; in every other entry it is the coefficient of the maximised
bound, attained by Theorem \ref{Tm_asym}. The entries for $k\geq 3$, $\delta =4$ and for $k\geq 1$,
$\delta =5$ are $\frac{k-1}{k+1}$ and $\frac{k}{k+1}$ throughout, and those for $5\leq k\leq 8$,
$\delta =3$ are $\frac{k-2}{k+1}$.}%
\label{Tab_coeff}%
\end{table}

Each column of the table increases with $k$ and each row increases with $\delta $: a wider window can
only catch more, and a larger minimum degree leaves less room for the small degrees which the best
window collects. For large $k$ the three entries are $1-\frac{2}{k-2}$, $1-\frac{2}{k+1}$ and
$1-\frac{1}{k+1}$, so that the density of the vertices lying outside the best window is $\frac{2}{k-2}$,
$\frac{2}{k+1}$ and $\frac{1}{k+1}$ respectively: with minimum degree five one vertex out of $k+1$
may escape, with minimum degree four two, and with minimum degree three two out of every $k-2$.
All three densities tend to zero, so $\mathrm{MP}(n,\delta ,k)/n\to 1$ as $k\to \infty $ for each
$\delta $, which is what one expects, a window of width $k+1$ eventually covering all degrees that a
maximal planar graph can afford to use.


We conclude with the following three problems, which we consider the most attractive ones. The first
concerns the error term.

\begin{problem}
\label{Pr_error}
For which pairs $(\delta ,k)$ is $\mathrm{MP}(n,\delta ,k)-c_{\delta ,k}n$ bounded, and is it
eventually periodic in $n$? Our constructions give only $O(\sqrt{n})$, the loss coming from the
boundary of the disk and from the cap, whereas every exact value we know exceeds the lower bound by
at most two, and by at most one except at $(\delta ,k,n)=(3,2,6)$ and $(3,3,7)$ (Table
\ref{Tab_MP34small}); this suggests that the exact value is $c_{\delta ,k}n+O(1)$ for every pair. The only
ranges in which the exact value is known for all $n$ are those of Proposition \ref{Prop_deg3small}.
\end{problem}

The second problem asks whether the three minimum degrees are ordered. Table \ref{Tab_coeff} is
increasing in $\delta $ in every row, and so is every column of exact values that we have computed.

\begin{problem}
\label{Pr_monotone}
Is $\mathrm{MP}(n,3,k)\leq \mathrm{MP}(n,4,k)\leq \mathrm{MP}(n,5,k)$ for every $k\geq 0$ and every
$n$ at which all three are defined? The inequality holds for the coefficients, by Table
\ref{Tab_coeff}, and it holds for every $n\leq 12$ and every $k\leq 4$ by the enumeration.
\end{problem}

The last problem leaves planarity behind. The whole method rests on one feature of the class, namely
that its degree sum is a constant of the order, and that feature is not special to planarity. A graph is
\emph{maximal $d$-degenerate} if it is $d$-degenerate and the addition of any edge destroys
$d$-degeneracy; such a graph of order $n\geq d$ has exactly $dn-\binom{d+1}{2}$ edges, so that its degree
sum is $2dn-d(d+1)$, and its minimum degree is $d$ as soon as $n>d$. Every maximal outerplanar graph is
maximal $2$-degenerate, and the maximal planar graphs have the edge count of the case $d=3$
without being $3$-degenerate; observe that the counting argument of Section \ref{Sec_prelim} applies to the
whole family, since all it uses is the order, the size and the minimum degree.

\begin{problem}
\label{Pr_degenerate}Develop an asymptotic for
\[
\mathrm{MD}(n,d,k)=\min \{\mathrm{sp}(G,k):G\text{ maximal }d\text{-degenerate of order }n\} .
\]
\end{problem}

\bigskip\noindent\textbf{Declaration on the use of artificial intelligence.}~~In preparing this paper the
authors used large language models of Anthropic (the Claude~5 family, in particular Claude Opus~5) to lift
and elaborate ideas from \cite{CaroLauriZarb} and \cite{CaroWest}, to overcome many technical details, to
handle exploration and experiments for small values of $n$, to check computations and proposed proofs, and
for drafting and editing the text. The authors take full responsibility for its content.

\bigskip\noindent\textbf{Acknowledgments.}~~R.~\v{S}krekovski acknowledges the partial support by ARIS program P1-0383, and the annual work program of Rudolfovo.

\end{document}